%% file: main.tex
\documentclass[11pt]{article}

\input{preamble}
\usepackage{amsmath,amssymb,xfrac}
    \usepackage{enumitem} \setlist{nosep}
\usepackage[pagebackref,breaklinks,unicode]{hyperref} 
    \renewcommand*{\backrefalt}[4]{\ifcase #1 (Not cited).\or (Cited p.~#2).\else (Cited pp.~#2).\fi} 
\usepackage{graphicx} 
\usepackage{mathrsfs}
\usepackage[T1]{fontenc}
\usepackage[margin=3cm]{geometry}
\usepackage{natbib}

\newcommand*{\ext}{\mathrm{ext}}
\newcommand*{\N}{\mathbb{N}}

\makeatletter
\newtheorem*{rep@theorem}{\rep@title}
\newcommand{\newreptheorem}[2]{%
\newenvironment{rep#1}[1]{%
 \def\rep@title{#2 \ref{##1}}%
 \begin{rep@theorem}}%
 {\end{rep@theorem}}}
\makeatother

\newreptheorem{theorem}{Theorem}
\newreptheorem{example}{Example}
\newreptheorem{fact}{Fact}

\newcommand*{\ssm}{\smallsetminus}

\definecolor{harrycomment}{rgb}{0.6,0,0.4}

\definecolor{shakedcomment}{rgb}{0, 0, 255}

\definecolor{oussamacomment}{rgb}{0,0.5,0}

\title{Quasiisometric embeddings between right-angled Artin groups: flexibility}
\hypersetup{pdftitle = Quasiisometric embeddings of RAAGs: flexibility}
\date{}
\author{Shaked Bader, Oussama Bensaid, and Harry Petyt}

\newcommand{\Addresses}{{\bigskip\footnotesize\par
\textsc{Mathematical Institute, University of Oxford, UK}\par\nopagebreak\textit{E-mail address}: 
\texttt{shaked.bader@sjc.ox.ac.uk}
\par\medskip\par
\textsc{Institut de Recherche en Mathématique et en Physique, Université Catholique de Louvain, Belgium} \par\nopagebreak\textit{E-mail address}: 
\texttt{oussama.bensaid@uclouvain.be}
\par\medskip\par
\textsc{Mathematics Institute, University of Warwick, UK}\par\nopagebreak\textit{E-mail address}: 
\texttt{harrypetyt@gmail.com}
}}

\begin{document}

\maketitle

\begin{abstract}
We give a complete characterisation of when the right-angled Artin group on one cycle graph can be quasiisometrically embedded in the right-angled Artin group on another cycle graph. In particular, we find infinitely many instances of quasiisometric embeddings where there is no subgroup relation. This contrasts with the fact that such groups are quasiisometrically rigid.

More generally, we construct quasiisometric embeddings between graph products of finite or cyclic groups whose underlying graphs are cycles. As a special case, we obtain exotic quasiisometric embeddings of the hyperbolic plane in all right-angled Artin groups whose defining graph contains an induced cycle of length greater than four.
\end{abstract}

{\hypersetup{hidelinks}\setcounter{tocdepth}{1}\tableofcontents\setcounter{tocdepth}{2}}

\section{Introduction}

A \emph{right-angled Artin group} is a group whose only relations are that some of its generators commute. At one extreme of this we have free groups and at the other we have free-abelian groups, and one can view the class of right-angled Artin groups as an interpolation between these two extremes. 

Accordingly, right-angled Artin groups have many nice algebraic and combinatorial properties reminiscent of free and free-abelian groups. For example, they admit simple normal forms for elements \cite{hermillermeier:algorithms}, their conjugacy classes are well understood \cite{servatius:automorphisms}, and they are linear \cite{davisjanuszkiewicz:right,hsuwise:onlinear}. See \cite{charney:introduction} for a survey.

In many ways, though, thinking of right-angled Artin groups as an interpolation between free and free-abelian groups undersells the richness and complexity of the class. This is perhaps most glaring when one considers their subgroup structure. As an early example, Bestvina--Brady famously used subgroups of right-angled Artin groups to show the inconsistency of the Eilenberg--Ganea and Whitehead conjectures \cite{bestvinabrady:morse}. Furthermore, the \emph{compact special} groups of Haglund--Wise can be characterised as the \emph{convex cocompact} subgroups of right-angled Artin groups \cite{haglundwise:special}. Specialness has been an extremely influential notion, for instance playing a key role in the resolution of the virtual Haken conjecture \cite{agol:virtual,wise:structure}. Aside from subgroups, Croke--Kleiner gave proper, cocompact actions of a four-generator right-angled Artin group on two CAT(0) spaces with non-homeomorphic visual boundaries \cite{crokekleiner:spaces}, and more recently Fioravanti has shown that understanding the growth of automorphisms of right-angled Artin groups requires understanding automorphisms of all compact special groups \cite{fioravanti:growth}.

One can ask when two right-angled Artin groups are isomorphic, and it turns out that this has a simple answer. A right-angled Artin group $A$ can be encoded by a simplicial graph $\Gamma$ that has a vertex for each (standard) generator and an edge whenever two generators commute. Clearly if two right-angled Artin groups have the same associated graph then they are isomorphic, and Droms proved the converse \cite{droms:isomorphisms}. Thus there is a one-to-one correspondence between right-angled Artin groups and simplicial graphs, so we can write $A=A_\Gamma$ without ambiguity. 

A more subtle question is that of when one right-angled Artin group is a subgroup of another. The most obvious subgroups of $A_\Gamma$ are the \emph{parabolic} subgroups $A_\Lambda$, where $\Lambda$ is an induced subgraph of $\Gamma$. This can be pushed further: if one \emph{doubles} $\Gamma$ (see \cref{def:double}) then one obtains an index-two subgroup of $A_\Gamma$, so one can obtain right-angled Artin subgroups by repeatedly doubling and taking an induced subgraph. This naturally leads to the \emph{extension graph} $\Gamma^\ext$, introduced by Kim--Koberda \cite{kimkoberda:embedability}. If $\Lambda$ is an induced subgraph of $\Gamma^\ext$, then $A_\Lambda$ is a subgroup of $A_\Gamma$ \cite[Thm~1.3]{kimkoberda:embedability} - we give an account of a refinement of this in \cref{sec:induced_implies_subgroup}. 

Kim--Koberda also studied the converse, and showed that the existence of a monomorphism $A_\Lambda\to A_\Gamma$ puts restrictions on $\Lambda^\ext$ and $\Gamma^\ext$. Their results are strongest in the case where $\Gamma$ has no triangles, in which case they show that it is necessary for $\Lambda$ to be an induced subgraph of $\Gamma^\ext$ \cite[Thm~1.11]{kimkoberda:embedability}. This completely classifies which right-angled Artin groups are subgroups of $A_\Gamma$ when $\Gamma$ is triangle-free, and their theorem can be viewed as a natural extension of Droms' theorem. By studying the extension graphs of cycles, Kim--Koberda deduce the following.

\begin{theorem}[{\cite[Thm~1.12]{kimkoberda:embedability}}] \label{thm:kimkoberda_subgps}
Let $m,n\ge4$. There is a monomorphism $A_{C_m}\to A_{C_n}$ if and only if $m=n+p(n-4)$ for some $p\ge0$.
\end{theorem}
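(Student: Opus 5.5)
Since $C_n$ is triangle-free for $n\ge4$, the theorem reduces to a purely graph-theoretic statement. By \cite[Thm~1.3]{kimkoberda:embedability}, recalled above, $A_\Lambda\le A_\Gamma$ whenever $\Lambda$ is an induced subgraph of $\Gamma^\ext$. Conversely, by \cite[Thm~1.11]{kimkoberda:embedability}, a monomorphism $A_\Lambda\to A_\Gamma$ with $\Gamma$ triangle-free forces $\Lambda\le\Gamma^\ext$ to be induced. So the plan is to show that $C_m$ is an induced subgraph of $C_n^\ext$ if and only if $m=n+p(n-4)$ for some $p\ge0$.

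\emph{Sufficiency.} Use the doubling description of $C_n^\ext$. Take $C_n$ with vertices $v_1,\dots,v_n$ and double along the star of $v_1$. The result consists of two copies of $C_n$ glued along $\operatorname{st}(v_1)=\{v_n,v_1,v_2\}$, and it is an induced subgraph of $C_n^\ext$, since it is the graph associated with the conjugate $v_1^{v_2}$ and so on.

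In this doubled graph, remove $v_1$ and its twin. Then take the path $v_2,v_3,\dots,v_n$ in the first copy together with the path $v_2',\dots,v_n'$ in the second copy. These share only $v_2$ and $v_n$, so they close up to a cycle of length $2(n-2)=n+(n-4)$.

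One must check that this cycle is induced. The only possible chords involve $v_2$ or $v_n$, and their neighbours in the two copies are exactly their cycle neighbours. So the cycle is induced.

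Iterating the operation produces induced cycles of length $n+p(n-4)$ for every $p\ge0$. Each iteration doubles along the star of a vertex of the current cycle and splices another $n-4$ interior vertices in. The splicing means replacing a path of length $2$ through the current vertex by the complementary path of length $n-2$ in a fresh translate of $C_n$.

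\emph{Necessity.} This is the main obstacle. The plan is to use the tree-like structure of $C_n^\ext$. It is a union of translates $g\cdot C_n$, and any two translates meet in at most a star or in a single vertex or edge. The incidence pattern of the translates forms a tree, namely the Bass--Serre tree of the splitting of the doubling along stars.

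Given an induced cycle $C$ in $C_n^\ext$, pick a translate that is extremal in this tree among those containing an edge of $C$. Then $C$ must enter and leave that translate through the intersection with a single neighbouring translate, which is a star $\operatorname{st}(v)$. Hence $C$ contains a path of length $n-2$ around that copy of $C_n$. Replacing this path by the length-$2$ path through $v$ gives a shorter cycle, of length $|C|-(n-4)$.

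Two things then need to be verified. First, the shorter cycle is still induced. Second, it lies in fewer translates. Induction on the number of translates then reduces $C$ to an induced cycle inside a single copy of $C_n$. The only induced cycle of length at least $4$ inside $C_n$ is $C_n$ itself, which gives $m\equiv n\pmod{n-4}$ with $m\ge n$.

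Care is needed at two points. One is the intersections of translates that are single edges or vertices, where one must argue that $C$ cannot pass through them without creating a chord. The other is ruling out induced cycles that use only part of a translate.
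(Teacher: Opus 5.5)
\textbf{Verdict: the reduction and sufficiency are fine, but the necessity half has a genuine gap.}

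Your reduction is exactly Kim--Koberda's route: their Theorems~1.3 and~1.11 turn the question into which $C_m$ are \emph{induced} subgraphs of $C_n^\ext$. The paper itself gives no proof of this statement; it simply quotes their result. Your sufficiency argument is fine, provided each splice is done by doubling the whole finite graph built so far (which stays induced by \cite[Lem.~3.1]{kimkoberda:embedability}). The doubling should be along a vertex of the current cycle whose two cycle-neighbours lie in a common translate with it.

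The necessity half rests on a structure that does not exist. The translates $g\cdot C_n$ do not form a tree under any natural incidence. For commuting generators $s,t$, the edge $g\cdot\{v_s,v_t\}$ lies in $gs^at^b\cdot C_n$ for all $a,b\in\Z$. Even the star-sharing relation ($g\sim gs^a$) contains the square $g,gs,gst,gt$. So every edge of $C$ lies in infinitely many translates. Consequently ``extremal among the translates containing an edge of $C$'' is undefined, and ``lies in fewer translates'' is not a quantity you can induct on.

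More seriously, the steps you postpone are the entire content of this direction, and they are exactly where inducedness must enter. Without inducedness the conclusion is false: two translates meeting in a single edge bound a non-induced $C_{2n-2}$. This is the shortcut phenomenon the paper uses to get $(n-2)$-increments when $q>0$. Also, replacing the long arc by $x,u,y$ produces chords whenever $C$ meets $\mathrm{lk}(u)$ in a third vertex, and nothing in your sketch excludes that. For $n=4$ your intersection claims also fail ($1\cdot C_4$ and $s_1s_3\cdot C_4$ meet in two opposite vertices), and shortening by $n-4=0$ does nothing. That case is easy separately, since $C_4^\ext$ is complete bipartite.

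\textbf{How to repair it.} You need a finite, genuinely tree-like framework; two options are available.
\begin{itemize}
\item \emph{Doubling sequence.} Take $1\cdot C_n=\Gamma_0\subset\dots\subset\Gamma_l\supseteq C$ as in the proof of Lemma~\ref{lem:gen_hom_in_extgraph}, and write $\Gamma_l=X\cup_{\star(u)}X'$. Here the star really separates. An induced $C$ not contained in one half cannot pass through $u$. The arcs of $C$ between consecutive points of $C\cap\mathrm{lk}(u)$ close up with $u$ to induced cycles in a half. Induction on $l$ then gives $|C|\equiv 2r \pmod{n-4}$, where $r=|C\cap\mathrm{lk}(u)|$.
\item \emph{Homological decomposition.} Write $C=\sum_{g\in\Sigma}g\cdot C_n$ in $H_1(C_n^\ext;\Z/2\Z)$ with $\Sigma$ finite, and take $g\in\Sigma$ of maximal syllable length, as in Lemma~\ref{lem:embedding_of_cycle_in_ext}. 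This is the correct version of your ``extremal translate'', but by itself it only yields $m=n+p(n-4)+q(n-2)$.
\end{itemize}
In either framework the missing ingredient is the same. You need an argument, using inducedness, that $C$ crosses the relevant star only twice, i.e.\ that $r=2$. In the homological picture, this amounts to showing that the configurations producing $(n-2)$-increments cannot occur, which is what forces $q=0$.
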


From the perspective of large-scale geometry, the two ends of our spectrum are rather different: free-abelian groups are absolutely quasiisometrically rigid, in the sense that every group quasiisometric to $\Z^n$ is commensurable with $\Z^n$, but any two nonabelian free groups are quasiisometric. This suggests that both quasiisometric rigidity and flexibility should be displayed by right-angled Artin groups.

On the flexibility side, Behrstock--Neumann showed that if $\Gamma$ and $\Lambda$ are finite trees of diameter at least three, then $A_\Gamma$ and $A_\Lambda$ are quasiisometric \cite{behrstockneumann:quasiisometric}. More generally, Behrstock--Januszkiewicz--Neumann classified which right-angled Artin groups defined by ``trees of triangles'' are quasiisometric \cite{behrstockjanuszkiewiczneumann:quasiisometric}.

The first examples on the rigidity side were the \emph{atomic} right-angled Artin groups, due to Bestvina--Kleiner--Sageev \cite{bestvinakleinersageev:asymptotic}. This was vastly generalised by Huang, who showed that if $A_\Gamma$ and $A_\Lambda$ both have finite outer automorphism group, then $A_\Gamma$ and $A_\Lambda$ are quasiisometric if and only if they are isomorphic \cite{huang:quasiisometric:1}. He moreover handled various cases with infinite outer automorphism group \cite{huang:quasiisometric:2}, and gave a large class of right-angled Artin groups $A_\Gamma$ that are absolutely quasiisometrically rigid \cite{huang:commensurability}.

Extension graphs also play a role in Huang's work \cite{huang:quasiisometric:1}, which involves passing from a quasiisometry $A_\Gamma\to A_\Lambda$ to an isomorphism $\Gamma^\ext\to\Lambda^\ext$. This mirrors the situation in mapping class groups, where quasiisometric rigidity is established by inducing an automorphism of the curve graph \cite{behrstockkleinerminskymosher:geometry} and applying Ivanov's theorem \cite{ivanov:automorphisms,korkmaz:automorphisms,luo:automorphisms}. Extension graphs can often be thought of as acting as curve graphs for right-angled Artin groups \cite{kimkoberda:geometry}.

In this article, we are interested in quasiisometric \emph{embeddings} between right-angled Artin groups. Passing from quasiisometries to quasiisometric embeddings is the geometric analogue of passing from isomorphisms to monomorphisms, which was already discussed. In the setting of mapping class groups (of surfaces with complexity at least four), Bowditch proved an extremely strong rigidity result for quasiisometric embeddings: any quasiisometric embedding between mapping class groups is actually a self-quasiisometry at bounded distance from a left-multiplication map \cite{bowditch:large:mapping}. There are also very strong results for homomorphisms when the genera do not differ too much \cite{aramayonasouto:homomorphisms,depool:homomorphisms} (which is a necessary condition \cite{aramayonaleiningersouto:injections}).

For the absolutely quasiisometrically rigid right-angled Artin groups considered in \cite{huang:commensurability}, all self-quasiisometries were completely classified in \cite[Thm~9.23]{baderbensaidpetyt:quasiisometric:rigidity}. Comparing with the mapping class group case, one may hope that quasiisometric embeddings between such right-angled Artin groups are all close to self-quasiisometries. Failing that, one may hope for a correspondence between quasiisometric embeddings and monomorphisms.

Our main result is the following, which shows that these hopes are misplaced, even in the very simplest cases.

\begin{mthm} \label{mthm:cycles_qie}
Let $m,n\ge4$. There is a quasiisometric embedding $A_{C_m}\to A_{C_n}$ if and only if $m=n$ or there exist $p\ge1$, $q\ge0$ such that $m=n+p(n-4)+q(n-2)$.
\end{mthm}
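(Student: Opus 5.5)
We prove the two directions separately. Existence is a construction; non-existence uses the quasiisometric rigidity machinery for cycle RAAGs.

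\emph{Sufficiency.} The case $m=n$ is the identity. The case $m=n+p(n-4)$ with $p\ge1$ comes from \cref{thm:kimkoberda_subgps}: the monomorphism $A_{C_m}\to A_{C_n}$ realises $C_m$ as an induced subgraph of $C_n^\ext$ via repeated doubling. Such subgroups are generated by conjugates of standard generators and are convex cocompact, so the inclusion is a quasiisometric embedding.

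The new content is the extra summand $q(n-2)$. Here I would build a map that is not a homomorphism. The model is a piecewise construction on the Salvetti complex, or rather on its universal cover $X_m$, which is a CAT(0) square complex whose flats are indexed by edges of $C_m$. The idea is to start from the embedding given by some $p\ge1$. This is needed because $p\ge1$ guarantees an ``unfolded'' cycle in $C_n^\ext$ with at least one conjugated generator. Then I would insert $n-2$ further vertices by taking a detour. The detour replaces one edge-flat $\mathbb{R}^2$ of the source with a quasiflat in the target that runs through the star of a vertex. Concretely, a path of length $n-2$ in $C_n$ avoiding two adjacent vertices $a,b$ spans a subcomplex, and a half-flat-by-half-flat gluing (a ``bent'' flat along the $a$- and $b$-directions) can be quasiisometrically realised there.

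Formally, I would define the map on vertices of the graph-of-groups or tree-of-flats decomposition of $X_m$: dual hyperplane trees for each generator of $C_m$. Each generator would go to a quasi-line, namely either a standard generator line or a concatenation of them. I would check that commuting pairs go to quasiflats, and that the ``turning'' across each inserted block is uniformly controlled. The quasiisometric embedding property would then follow from a coarse lower bound on distance. For that I would use the distance formula for RAAGs, i.e.\ the HHS structure: distances are sums over standard product regions of projections to extension-graph-type factors. The map sends each source factor to boundedly many target factors with multiplicatively controlled lengths. Iterating the insertion handles arbitrary $q$.

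\emph{Necessity.} Suppose $f\colon A_{C_m}\to A_{C_n}$ is a quasiisometric embedding with $m\ne n$. Since both groups have dimension $2$, maximal flats go to quasiflats, and quasiflats in $X_n$ are finite unions of flats (Huang). So $f$ induces a map between the sets of maximal flats respecting coarse intersection along lines. This gives an induced map on hyperplane-direction data, which I would encode as a map from $C_m^\ext$-type data to $C_n^\ext$. The image of an $m$-cycle of flats in the source, each meeting the next in a line, must be a closed chain of flats in the target. After the coarse map, each consecutive pair of source flats may become a finite chain of target flats. A closed chain in $C_n^\ext$ that does not backtrack trivially has length congruent to specific values: cycles in $C_n^\ext$ are built from copies of $C_n$ glued along stars, and each gluing adds $n-4$ or $n-2$ vertices. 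Counting the lengths gives the form $n+p(n-4)+q(n-2)$. One must also exclude $p=0$ with $q\ge1$, presumably by showing that a single unfolding is needed to ``close up'' a bent chain.

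The main obstacle is this combinatorial analysis of closed chains of flats and excluding $p=0,q\ge1$. First I would try classifying minimal cycles in $C_n^\ext$ that are admissible in the sense of being realisable by flats meeting consecutively in lines, using induction on the number of doublings.
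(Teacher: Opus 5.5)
Your overall shape is right: a construction for existence, and flat rigidity plus the combinatorics of $C_n^\ext$ for non-existence. But both directions have a genuine gap exactly where the paper does its real work.

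\textbf{Existence for $q>0$.} The cases $m=n$ and $q=0$ are fine (cf.\ \cref{prop:induced_to_homo}). For $q>0$, however, you give no construction, and the mechanism you suggest is incompatible with known rigidity. For $m,n>4$, \cref{thm:qie_induces} says every QIE $A_{C_m}\to A_{C_n}$ induces a combinatorial embedding $C_m^\ext\to C_n^\ext$. So the image of $1\cdot C_m$ is an honest $m$-cycle in $C_n^\ext$, and each source flat corresponds to a single edge. The extra $q(n-2)$ therefore cannot come from replacing a source flat by a bent quasiflat through a star. Nor can it come from sending a generator to a concatenation of lines of different types.

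The natural $m$-cycle is the boundary of a block: translates of $C_n$ glued $p$ times along stars and $q$ times along edges. For $q>0$ this cycle is \emph{not induced}, because each edge-gluing leaves a chord, and this is the real obstacle. Suppose you develop this cycle over $C_m^\ext$ by doubling, as in the $q=0$ case. At a shortcut vertex $w=f(v)$, the translation fixing $\star(w)$ also fixes the shortcut twin $w'=f(v')$, although $v'$ is not adjacent to $v$ in $C_m$. Hence $v'$ and $s_v^a\cdot v'$ have the same image, the developed map is not injective, and by \cref{thm:qie_induces} no QIE induces it.

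The paper handles this as follows:
\begin{itemize}
\item it replaces doubling at shortcut vertices by \emph{gliding}, which overlays a rotated, shifted, possibly sign-reversed block with exponent $\hat a$, meeting the old block exactly in the star;
\item it proves doubles and glides are compatible (\cref{lem:gliding_commutes}), so $f$ and the route map $F$ are well defined;
\item it gets the colipschitz bound from an explicit description of how images of lazy words fail to be syllable-reduced (\cref{prop:roots_structure}, \cref{lem:prefixes}).
\end{itemize}
Your appeal to the distance formula (``each source factor goes to boundedly many target factors'') assumes precisely this missing content. For a non-equivariant map you must show that distinct source domains do not collapse onto the same target domain and that no cancellation occurs.

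\textbf{Non-existence.} Here too you need the full strength of \cref{thm:qie_induces}. In your weaker picture, one source flat may become a chain of target flats, so the image of the $m$-cycle is a closed chain of some length $L\ge m$. Constraints on $L$ then say nothing about $m$.

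Given a genuine embedding $C_m\to C_n^\ext$, counting via $\mathbb{Z}/2\mathbb{Z}$-homology (\cref{lem:embedding_of_cycle_in_ext}) gives $m=n+p(n-4)+q(n-2)$ with $p,q\ge0$. Excluding $p=0<q$ cannot be done with a single cycle, as you propose. Indeed $C_{n+q(n-2)}$, e.g.\ $C_{2n-2}$, \emph{does} embed in $C_n^\ext$ as a non-induced cycle.

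The paper (\cref{lem:embedding_of_extcycle_in_ext}) instead uses a second copy of $C_m$ inside $C_m^\ext$. When $p=0$, the image cycle $C$ contains every vertex of the $q+1$ translates of $C_n$ composing it, and some vertex $w$ meets exactly one chord. The copy $C'$ meeting $C$ exactly in the star of $w$ in $C$ would then force two conjugates of a standard generator to coincide, which normal forms rule out.

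You also do not address the boundary cases. For $n=4$ the target $F_2\times F_2$ is not square-free. Existence for even $m$ there needs Rull's embedding into a product of trees, and non-existence for odd $m$ needs separate input. The case $m=4<n$ also needs its own argument.
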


By \cite{huang:quasiisometric:1}, the groups in \cref{mthm:cycles_qie} are pairwise non-quasiisometric, so for $m\ne n$ the quasiisometric embeddings in \cref{mthm:cycles_qie} are not close to quasiisometries. The theorem also shows that the spectrum of quasiisometric embeddings is richer than the spectrum of subgroups: comparing with \cref{thm:kimkoberda_subgps} gives the following.

\begin{mcor}
For each $n>6$, there are infinitely many values of $m$ for which there is a quasiisometric embedding $A_{C_m}\to A_{C_n}$ but $A_{C_m}$ is not a subgroup of $A_{C_n}$.
\end{mcor}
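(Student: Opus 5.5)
The corollary follows by combining \cref{mthm:cycles_qie} with \cref{thm:kimkoberda_subgps}: we exhibit an infinite family of values of $m$ that satisfy the arithmetic condition of the former but not of the latter. The only work is a congruence computation modulo $n-4$.

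Fix $n>6$, so that $n-4\ge3$. For each integer $k\ge0$ with $k\equiv1 \pmod{n-4}$, set
\[
m_k = n + 1\cdot(n-4) + k(n-2).
\]
This is of the form in \cref{mthm:cycles_qie} with $p=1\ge1$ and $q=k\ge0$. Also $m_k\ge n\ge4$. Hence there is a quasiisometric embedding $A_{C_{m_k}}\to A_{C_n}$. The values $m_k$ are pairwise distinct and there are infinitely many admissible $k$, so this gives infinitely many values of $m$.

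It remains to check that $A_{C_{m_k}}$ is not a subgroup of $A_{C_n}$. By \cref{thm:kimkoberda_subgps}, it is a subgroup if and only if $m_k=n+p'(n-4)$ for some $p'\ge0$. Since $m_k\ge n$, this is equivalent to $n-4$ dividing $m_k-n$. We compute $m_k-n$ modulo $n-4$. Since $n-2\equiv 2\pmod{n-4}$,
\[
m_k-n = (n-4)+k(n-2) \equiv 2k \equiv 2 \pmod{n-4}.
\]
Because $n-4\ge3$, we have $2\not\equiv0\pmod{n-4}$. Thus $n-4$ does not divide $m_k-n$, and no such $p'$ exists. So there is no monomorphism $A_{C_{m_k}}\to A_{C_n}$.

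There is no real obstacle: all the content lies in \cref{mthm:cycles_qie} and \cref{thm:kimkoberda_subgps}. The one point needing care is the hypothesis $n>6$, which is exactly what makes $2\not\equiv0\pmod{n-4}$. For $n=5$ or $n=6$ we have $n-4\in\{1,2\}$, so $n-2\equiv 0\pmod{n-4}$ and every $m$ produced by \cref{mthm:cycles_qie} already satisfies the subgroup condition.
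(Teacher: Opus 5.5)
Your proposal is correct and matches the paper's argument: the paper also combines \cref{mthm:cycles_qie} with \cref{thm:kimkoberda_subgps}, noting that $m=n+p(n-4)+q(n-2)$ cannot be written as $n+p'(n-4)$ when $2q\not\equiv 0 \pmod{n-4}$, and that $n>6$ makes there infinitely many such $q$. Your choice $p=1$, $q\equiv 1\pmod{n-4}$ simply picks out an explicit infinite family of such $q$.
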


\begin{proof}
If $2q\not\equiv0$ modulo $n-4$, then $m=n+p(n-4)+q(n-2)$ cannot be expressed as $n+p'(n-4)$ with $p'$ an integer. By \cref{mthm:cycles_qie}, there is a quasiisometric embedding $A_{C_m}\to A_{C_n}$, but \cref{thm:kimkoberda_subgps} shows that $A_{C_m}$ is not a subgroup of $A_{C_n}$. Since $n>6$, there are infinitely many such $q$. 
\end{proof}

In the case where $m=n+p(n-4)$, the subgroup embeddings in \cref{thm:kimkoberda_subgps} can actually be taken to be quasiisometric embeddings \cite[Cor.~1.15]{kimkoberda:embedability} (see also \cref{prop:induced_to_homo}). We show in \cref{prop:no_homo_q0} that, even in this case, there are quasiisometric embeddings that are not at finite distance from any composition of self--quasiisometric-embeddings and a homomorphism (that is, not \emph{quasiconjugate} to a homomorphism).

As mentioned, the transition from quasiisometries to quasiisometric embeddings is analogous to the transition from isomorphisms to subgroups. Since the latter situation corresponded to passing from isomorphisms of defining graphs to a more complex situation involving extension graphs, it is reasonable to wonder whether, at least in quasiisometrically rigid cases, one can relate quasiisometric embeddings to maps between extension graphs. 

We established one direction of this for many right-angled Artin groups in \cite[Cor.~6.6]{baderbensaidpetyt:quasiisometric:rigidity}: in many situations, a quasiisometric embedding $A_\Gamma\to A_\Lambda$ induces a graph embedding $\Gamma^\ext\to\Lambda^\ext$, but the image need not be induced. The existence part of \cref{mthm:cycles_qie} goes in the converse direction, by first building a nice embedding of extension graphs. This raises the following natural question.

\begin{question*}
Are there natural conditions on an embedding $f:\Gamma^\ext\to\Lambda^\ext$ of extension graphs under which one can construct a quasiisometric embedding $F:A_\Gamma\to A_\Lambda$?
\end{question*}

It would seem that any reasonable construction of $F$ from the data of $f$ would require it to then induce the map $f$, which already puts restrictions on what $f$ can look like: see \cite{baderbensaidpetyt:quasiisometric:rigidity}. A more ambitious question is the following.

\begin{question*}
If there exists an embedding $\Gamma^\ext\to\Lambda^\ext$, does that mean there exists a quasiisometric embedding $A_\Gamma\to A_\Lambda$?
\end{question*}

We point out an important subtlety in this question: it is \emph{not} true that the existence of an embedding $\Gamma\to\Lambda^\ext$ implies that $A_\Gamma$ can be quasiisometrically embedded in $A_\Lambda$. Indeed, it is not difficult to see that $C_{2n-2}$ is a (non-induced) subgraph of $C_n^\ext$, but \cref{mthm:cycles_qie} shows that $A_{C_{2n-2}}$ cannot be quasiisometrically embedded in $A_{C_n}$ for any $n>6$. By comparison, note that if $\Gamma$ embeds as an induced subgraph of $\Lambda^\ext$, then $\Gamma^\ext$ does as well.

The construction of quasiisometric embeddings that we employ in order to prove \cref{mthm:cycles_qie} is robust enough that we can also use it to build quasiisometric embeddings between graph products where the underlying graphs are cycles and the vertex groups are finite or cyclic. The following is a simplified version of \cref{cor:graph_prod_finite}, which is itself a straightforward consequence of \cref{thm:cyclic_product_cyclic}. 

\begin{mthm} \label{mthm:graph_prod}
Suppose that $m,n>6$ satisfy $m=n+p(n-4)+q(n-2)$ for some integers $p\ge1$, $q\ge0$. Let $G$ and $H$ be graph products of finite or cyclic groups with underlying graphs $C_m$ and $C_n$, respectively. If every vertex group of $G$ has smaller cardinality than every vertex group of $H$, then there is a quasiisometric embedding $G\to H$.
\end{mthm}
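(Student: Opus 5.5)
Theorem~\ref{mthm:graph_prod} is stated as a simplified form of \cref{cor:graph_prod_finite}, itself a consequence of \cref{thm:cyclic_product_cyclic}. So the plan is to build the same geometric model used for the existence part of \cref{mthm:cycles_qie}, and then check that replacing $\Z$ vertex groups by finite or cyclic ones does not break it.

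The first step is a reduction to a uniform model. A graph product of groups with underlying graph $C_n$ acts geometrically on a right-angled building, or on a Davis-type CAT(0) cube complex, whose local structure at each vertex is determined by the cardinalities of the vertex groups. Up to quasiisometry, a graph product over $C_k$ ($k\ge5$) should depend only on which vertex groups are finite and, for those, on whether the cardinality is at least $2$ or $3$. The reason is that the relevant flats are the products $G_v\times G_w$ over edges, and these are quasiisometric to $\Z^2$, $\Z$-times-finite, or bounded sets. Using this, I would replace $H$ by a graph product in which every vertex group has size at least that of the corresponding vertex group of $G$. The cardinality hypothesis makes this possible: an infinite cyclic vertex group of $G$ forces all vertex groups of $H$ to be infinite, because they must be strictly larger.

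Next I would reproduce the construction for the RAAG case.
\begin{enumerate}
\item Write $m=n+p(n-4)+q(n-2)$.
\item Realise $C_m$ inside $C_n^{\ext}$, or inside its graph-product analogue (the extension graph of the building), as a cycle. This cycle is obtained by starting from $C_n$ and performing $p$ ``doublings'' across a vertex star, each adding $n-4$ vertices, and $q$ doublings across a single edge, each adding $n-2$ vertices.
\item Turn this combinatorial cycle into a map of buildings, or cube complexes, sending the standard subcomplexes for $G$ (the $G_v$-cosets and the edge flats) to corresponding subcomplexes for $H$. The map is defined chamber by chamber, using injections of $G_v$ into larger vertex groups of $H$. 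This is where the strict inequality of cardinalities is used: each vertex group of $G$ injects, as a set, into every vertex group of $H$, with room to spare for the extra branching introduced by the doublings.
\end{enumerate}

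Finally I would show the map is a quasiisometric embedding. The Lipschitz bound is immediate from the chamberwise construction. For the lower bound I would use the hierarchy or wall structure: the distance in $G$ is comparable to the number of walls, equivalently hyperplanes, separating two points. The construction should send distinct walls of $G$ to distinct walls of $H$, with uniformly bounded multiplicity, and should preserve the separation pattern. The condition $n>6$ is used here to guarantee that the images of walls from distinct doubling pieces do not cross in unexpected ways. This is the main obstacle. In the RAAG case one can appeal to extension-graph combinatorics, but for finite vertex groups the building branches, so I must verify that the folding introduced by the $q$-type doublings, along edges, does not collapse distances. I would handle it by showing that each geodesic in $G$ maps to a path in $H$ whose hyperplanes are crossed at most once, so that the path is a combinatorial geodesic. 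This reduces to a local check at each vertex link, which are cycles $C_n$ with weighted branching.
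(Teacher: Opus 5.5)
There are genuine gaps, and the main one is the obstacle you flag but do not resolve. Your step~2 correctly identifies the block.

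\textbf{The reduction.} The classification you rely on is false. For example, $A_{C_n}(3)$ and $A_{C_n}(4)$ are uniform lattices in the Bourdon buildings $I_{n,3}$ and $I_{n,4}$, whose boundaries have different conformal dimensions. What is true, and what the paper uses (via Genevois), is that graph products over the same graph with vertex groups of equal cardinalities are quasiisometric. So you may take $G=A_{C_m}(\bar M)$ and $H=A_{C_n}(\bar N)$. Also, an infinite vertex group in $G$ makes the hypothesis unsatisfiable; it does not force $H$ to be infinite at every vertex.

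\textbf{The missing construction in step 3.} Step~3 is not yet a construction. For $q>0$ the copy of $C_m$ in $C_n^{\ext}$ is not induced. Doubling the block along a shortcut vertex $f(g\cdot v)$ fixes its shortcut twin, which is $f(g\cdot u)$ for some $u$ not adjacent to $v$. The developed map would then identify $g\cdot u$ with $gs_v^a\cdot u$, so it could not be injective on $C_m^{\ext}$; by Theorem~\ref{thm:qie_induces}, injectivity is necessary in the RAAG case. You must say what the map does on such syllables.

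The paper's answer is the glide of Item~\ref{sh:gliding}. One overlays a rotated, re-marked block, whose sign depends on the depth of the shortcut, so that the twin becomes an interior vertex, and one uses the exponent $\hat a$ ($=a+1$ for $a>0$) instead of $a$. Lemmas~\ref{lem:doubling_commutes}, \ref{lem:gliding_commutes} and~\ref{lem:f_well_defined} then make the development well defined.

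This exponent shift is the precise content of your ``room to spare''. Lift $g$ to the normal form $\hat g$ with exponents in $\{1,\dots,M_v-1\}$. Glide exponents then lie in $\{2,\dots,M_v\}$, and $M_v<N_w$ is exactly what keeps them nonzero and distinct modulo $N_w$. When $q=0$ there are no glides and $M^+=N^-$ suffices.

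\textbf{The lower bound.} Your mechanism fails as stated, because for the model you are reproducing, images of geodesics are not geodesics. Assume $q>0$ and, for definiteness, $p$ even (for $p$ odd use the mirror image). In the notation of Section~\ref{sec:building_qie}, let $\ell_p$ be the label of $B^+_p$, so that $B^+_{p+1}=\ell_pt_1t_2\cdot\Lambda$ with late shortcut $\ell_p\cdot w_2$. Choose non-adjacent $v,v'\in C_m$ with $f(v)=\ell_pt_1t_2\cdot w_3$ and $f(v')=\ell_p\cdot w_0$. Then $F(s_v^a)=\ell_pt_1t_2t_3^{a}$, while $F(s_v^as_{v'}^b)$ is represented by $\ell_pt_1t_2\,t_3^{a}\,t_2^{-1}t_1^{-1}\,t_0^{b}$, in which $t_2$ cancels across $t_3^a$. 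Hence
$|F(s_v^a)|+\dist(F(s_v^a),F(s_v^as_{v'}^b))=|F(s_v^as_{v'}^b)|+2$.
So $F(s_v^a)$ lies on no geodesic from $1$ to $F(s_v^as_{v'}^b)$, even though $s_v^a$ lies on a geodesic from $1$ to $s_v^as_{v'}^b$. No glide occurs here: the shortcut pair alone causes the backtracking.

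This is exactly the phenomenon classified in Proposition~\ref{prop:roots_structure}. The property you planned to verify in links, that each hyperplane is crossed at most once, simply does not hold. The paper's substitute is coarse:
\begin{enumerate}
\item pass to lazy representatives;
\item show that such two-letter cancellations are the only failures of reducedness;
\item prove Lemma~\ref{lem:prefixes}, which controls the common prefix of $F(g_1),F(g_2)$ by $F(g_0)$, for $g_0$ the common prefix of $g_1,g_2$, up to a bounded label.
\end{enumerate}

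\textbf{The paper's route for graph products.} The paper builds nothing new here. Theorem~\ref{thm:cyclic_product_cyclic} composes three maps: the lift $g\mapsto\hat g$, the RAAG map with each $t_w^{-1}$ in a label replaced by $t_w^{N_w-1}$, and the quotient $A_{C_n}\to A_{C_n}(\bar N)$. It then checks that the exponents occurring survive the quotient. All the hard estimates are inherited from the RAAG case rather than redone in a building.
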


Given an integer $N\ge2$ and a simplicial graph $\Gamma$, let $A_\Gamma(N)$ denote the graph product with underlying graph $\Gamma$ and with all vertex groups being $\sfrac\Z{N\Z}$. If $N=2$, then $A_\Gamma(2)$ is also known as the \emph{right-angled Coxeter group} $W_\Gamma$. If $n>4$, then the hyperbolic plane can be tiled by regular right-angled $n$-gons, and a regular such tiling can be generated by the reflection group $A_{C_n}(2)$, which is thus a virtual surface group. Hence \cref{mthm:graph_prod} yields exotic quasiisometric embeddings of $\mathbb H^2$ as a special case.

\begin{mcor} \label{mcor:exotic_H2}
If $\Gamma$ contains an induced cycle of length greater than four, then there are quasiisometric embeddings $f:\mathbb H^2\to A_\Gamma$ such that $f(\mathbb H^2)$ is not contained in any finite neighbourhood of any finite union of surface subgroups of $A_\Gamma$.
\end{mcor}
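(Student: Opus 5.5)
The plan is to derive the corollary from \cref{mthm:graph_prod}, taking the right-angled Coxeter group $W_{C_m}=A_{C_m}(2)$ as the model of $\mathbb H^2$. Let $C_n\subseteq\Gamma$ be an induced cycle with $n\ge5$.

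\textbf{Step 1: reduce to a long cycle.} We want to work with a cycle of length greater than $6$, so that \cref{mthm:graph_prod} applies. If $n\le6$, we replace $C_n$ by a longer induced cycle inside $\Gamma^\ext$. Taking an induced subgraph of an iterated double of $C_n$ gives $C_{n'}$ with $n'=n+p(n-4)>6$, and by \cref{thm:kimkoberda_subgps} together with \cite[Cor.~1.15]{kimkoberda:embedability}, $A_{C_{n'}}$ embeds quasiisometrically as a subgroup of $A_{C_n}$. Since $A_{C_n}$ is a parabolic, hence convex, subgroup of $A_\Gamma$, we get a quasiisometric embedding $A_{C_{n'}}\to A_\Gamma$. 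So we may assume $n>6$.

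\textbf{Step 2: build the embeddings.} For each $p\ge1$ and $q\ge0$, set $m=n+p(n-4)+q(n-2)$. Apply \cref{mthm:graph_prod} with $G=W_{C_m}$ (vertex groups of order $2$) and $H=A_{C_n}$ (vertex groups $\Z$, which are larger). This gives a quasiisometric embedding $W_{C_m}\to A_{C_n}$. Since $W_{C_m}$ is a virtual surface group, $\mathbb H^2$ is quasiisometric to it, and composing yields $f:\mathbb H^2\to A_\Gamma$.

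\textbf{Step 3: exoticness (the main obstacle).} We must show that for suitable $q$, the image $f(\mathbb H^2)$ is not within finite distance of a finite union of surface subgroups. Suppose $f(\mathbb H^2)\subseteq N_R(S_1\cup\dots\cup S_k)$. Since $\mathbb H^2$ is one-ended and not coarsely separated by such unions, a coarse-intersection argument should let us conclude that the image is coarsely equal to a finite-index portion of a single $S_i$, possibly after passing to a subsurface. That would make $f$ quasiconjugate to a homomorphism from a surface group.

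To reach a contradiction, I would compare the induced maps on extension graphs, using \cite[Cor.~6.6]{baderbensaidpetyt:quasiisometric:rigidity} and its analogue for Coxeter groups. A homomorphic embedding of a surface or Coxeter group would induce an induced-subgraph embedding compatible with Kim--Koberda's constraint, which forces $m$ to have the form $n+p'(n-4)$. If instead we choose $q$ with $2q\not\equiv0\pmod{n-4}$, which is possible because $n>6$, then the map we constructed does not arise in this way.

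This step is the delicate one. Making the coarse-containment implies homomorphism deduction rigorous may require the explicit structure of the construction, for instance showing that its image crosses infinitely many cosets of parabolic subgroups in a pattern that no single subgroup reproduces. If that approach stalls, I would instead use \cref{prop:no_homo_q0}-type arguments directly, showing that the image is not quasiconvex relative to any finite union of cosets.
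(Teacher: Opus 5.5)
Your Steps~1 and~2 are fine and match the paper, which also passes to a long induced cycle (it takes $n\ge10$) and uses the map $F_{2,\infty}\colon A_{C_m}(2)\to A_{C_n}$ from the proof of \cref{thm:cyclic_product_cyclic}, with $p=q=1$. The genuine gap is Step~3, and it is more than a matter of rigour: the obstruction you propose does not exist, because a homomorphism defined on a finite-index subgroup of $W_{C_m}$ cannot remember $m$.
\begin{itemize}
\item For every $m\ge5$, $W_{C_m}$ is virtually a closed hyperbolic surface group. All such groups are commensurable, so $W_{C_m}$ and $W_{C_{m'}}$ are commensurable for all $m,m'\ge5$.
\item $A_{C_n}$ contains undistorted closed surface subgroups, for instance the Servatius--Droms--Servatius surfaces, which are locally convex subcomplexes of a finite cover of the Salvetti complex.
\item Hence for every $m$, including those with $2q\not\equiv0$ modulo $n-4$, some finite-index subgroup of $W_{C_m}$ maps isomorphically onto an undistorted surface subgroup of $A_{C_n}$.
\item The Kim--Koberda constraint of \cref{thm:kimkoberda_subgps} concerns subgroups isomorphic to $A_{C_m}$, not to $W_{C_m}$ or its surface subgroups.
\item There is no analogue of \cref{thm:qie_induces} with source $\mathbb H^2$. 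Quasiconjugacy allows precomposition with arbitrary self-quasiisometries of $\mathbb H^2$, and these preserve no standard-geodesic or extension-graph structure.
\end{itemize}
So the congruence condition on $q$ buys nothing; what matters is only that $q\ge1$, so that the construction involves a glide. Your preliminary reduction, from coarse containment in a union of the $S_i$ to coarse equality with a single $S_i$, is also only asserted. In any case, \cref{mthm:graph_prod} is a pure existence statement and gives you no handle on the particular map.

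The missing idea is to work with the explicit map. You should show by direct computation that it is not coarsely a homomorphism on any finite-index subgroup, using the same glide phenomenon as in \cref{lem:no_homo}. The paper takes $g=s_3s_{-3}$ and $h=s_1s_{-1}$ in $A_{C_m}(2)$. Since $f(v_3)$ is a shortcut of $f(C_m)$, the first syllable of $g$ produces a glide, and one computes $F_{2,\infty}(g^k)=t_2t_{-1}(t_{-1}t_5)^k$ and $F_{2,\infty}(h^k)=(t_1t_{-1})^k$. By contrast, $F_{2,\infty}(g^kh^k)=t_2t_{-1}(t_{-1}t_5)^k(t_1t_3)^k$: after the glide, the standard geodesics coming from $v_{\pm1}$ have changed type. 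Now suppose $\psi$ is within distance $R$ of $F_{2,\infty}$ and is a homomorphism on a subgroup of index $r$. Then for $k$ a large multiple of $r!$, the element $\psi(g^k)\psi(h^k)$ has the form $F_{2,\infty}(g^k)\,z\,F_{2,\infty}(h^k)\,w$ with $|z|,|w|\le R$. This is far from $F_{2,\infty}(g^kh^k)$, a contradiction. Commensurability of surface groups then rules out the image lying near a single surface subgroup, and varying the pair $(g,h)$ handles finite unions.
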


By using a larger finite cyclic group $\sfrac{\Z}{N\Z}$ as edge stabilisers, one can generate a thick hyperbolic building, the \emph{Bourdon building} $I_{n,N}$ \cite{bourdon:immeubles,bourdon:surimmeubles}, and $A_{C_n}(N)$ is a uniform lattice in $I_{n,N}$.

Just like how right-angled Artin groups defined on cycles are quasiisometrically rigid \cite{huang:quasiisometric:1,huang:commensurability}, if $n>4$ and $N>2$ then the buildings $I_{n,N}$ are quasiisometrically rigid, in the sense that every self-quasiisometry is at finite distance from a self-isometry \cite{bourdonpajot:rigidity} (in fact this holds for all \emph{Fuchsian buildings} \cite{xie:quasiisometric}). Moreover, if $n>5$ and $N>2$, then any two uniform lattices in $I_{n,N}$ are commensurable \cite{haglund:commensurability}. In contrast to this rigidity, \cref{mthm:graph_prod} shows that there are interesting quasiisometric embeddings between Bourdon buildings.

\subsection*{Outline of the article}

\cref{sec:prelim} contains background on right-angled Artin groups and extension graphs, as well as the result \cref{thm:qie_induces} from \cite{baderbensaidpetyt:quasiisometric:rigidity}, which lets us pass from a quasiisometric embedding $A_\Gamma\to A_\Lambda$ to an embedding of extension graphs.

In \cref{sec:induced_implies_subgroup}, we give a simple argument for the fact, originally established in \cite{kimkoberda:embedability}, that if $\Gamma$ is an induced subgraph of $\Lambda^\ext$, then $A_\Gamma$ can be found as an undistorted subgroup of $A_\Lambda$. We prove \cref{prop:induced_to_homo}, which is a refinement of this. Having done so, we discuss how the construction in \cref{prop:induced_to_homo} relates to extension graphs. This discussion follows the same overall structure as the arguments in \cref{sec:building_qie}, and is intended to provide a warm-up to that section.

\cref{sec:building_qie} is the technical heart of the paper. In it, we prove the existence part of \cref{mthm:cycles_qie}. The strategy is as follows. 

Given $m$ and $n$ as in \cref{mthm:cycles_qie}, we first identify a base copy of $C_m$ inside $C_n^\ext$, which we refer to as a \emph{block}. This basic block is not an induced subgraph if $q>0$. We then ``develop'' the basic block to obtain a map $f:C_m^\ext\to C_n^\ext$, where copies of $C_m$ are sent to blocks inside $C_n^\ext$. Because blocks are not induced subgraphs, this developing procedure requires two operations, which we call \emph{doubling} and \emph{gliding}. The map $f$ is defined in Item~\ref{sh:f_construction}.

Having constructed $f$, in Item~\ref{sh:roots} we ``lift'' it to a map of \emph{syllable-reduced} words, which we then show induces a map $F:A_{C_m}\to A_{C_n}$, which ultimately will be the desired quasiisometric embedding. Showing that $F$ is Lipschitz is straightforward, but showing that it is colipschitz is more involved. Roughly, given $g\in A_{C_m}$, we find a particularly nice syllable-reduced word that represents $g$ and interacts well with the construction of $f$. We then analyse how the image of this word can fail to be syllable-reduced, and this lets us understand the $F$-images of prefixes of $g$. The proof that $F$ is a quasiisometric embedding is concluded in \cref{prop:building_cycles_qie}.

\cref{sec:cyclic_subgraphs_of_ext_graphs} handles the other direction of \cref{mthm:cycles_qie}. In view of \cref{thm:qie_induces}, this amounts to understanding exactly when the extension graph of one cycle graph is a subgraph of the extension graph of another. This is achieved by direct combinatorial considerations in \cref{prop:cycles_equation}.

The short \cref{sec:qie_between_cycles} combines the results of the previous sections to prove \cref{mthm:cycles_qie}.

\cref{lem:homo} shows that, in the case $q=0$ of \cref{mthm:cycles_qie}, the quasiisometric embedding constructed in \cref{sec:building_qie} is at finite distance from a homomorphism. In \cref{sec:exotic_when_q=0}, we give a modified version of the construction that yields a quasiisometric embedding that is not at finite distance from any homomorphism.

Finally, in \cref{sec:cyclic_graph_products} we consider more general graph products. We prove \cref{thm:cyclic_product_cyclic} and \cref{cor:graph_prod_finite}, which imply \cref{mthm:graph_prod}, and then deduce \cref{mcor:exotic_H2}.

\subsection*{Acknowledgements}
OB acknowledges support from the FWO and F.R.S.-FNRS under the Excellence of Science (EOS) programme (project ID 40007542). SB and HP thank the Isaac Newton Institute for their hospitality during the programme \emph{Operators, Graphs, Groups}, where some of the work on this paper took place (EPSRC grant EP/Z000580/1).

\section{Preliminaries} \label{sec:prelim}

For a (simplicial) graph $\Gamma$, the corresponding \emph{right-angled Artin group} (\emph{RAAG})  $A_\Gamma$ is the group with a generator $s_v$ for each vertex $v\in\Gamma$ and a relation $[s_v,s_w]$ whenever $vw$ is an edge of $\Gamma$. We call $S=\{s_v\,:\,v\in\Gamma\}$ the \emph{standard} generating set. This is the only generating set we shall consider in this paper.

Let $|g|$ denote the word length of $g\in A_\Gamma$ with respect to the generating set $S$. A \emph{minimal representative} of $g$ is a word in $S$ that represents $g$ and has length equal to $|g|$.

\begin{proposition}[\cite{hermillermeier:algorithms}] \label{prop:shuffling}
The minimal representatives of $g\in A_\Gamma$ can be obtained from an arbitrary representative by repeatedly shuffling commuting pairs of elements and cancelling inverse pairs.
\end{proposition}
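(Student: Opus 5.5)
We prove \cref{prop:shuffling} by induction on the length of the given word, relying on the normal form theory for right-angled Artin groups. Let $w$ be a word in $S^{\pm1}$ representing $g$.

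\textbf{Step 1: reduce non-minimal words.} Suppose $w$ is not minimal. We show that, after shuffling commuting letters, two letters $s_v^{\epsilon}$ and $s_v^{-\epsilon}$ become adjacent and can be cancelled. This step is the main obstacle. We establish it as a ``deletion condition'' by building a model of $A_\Gamma$.

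The model is the set of \emph{reduced} words modulo shuffling, where a word is reduced if no pair $s_v^{\epsilon}\cdots s_v^{-\epsilon}$ occurs in which every intermediate letter commutes with $s_v$. Each generator acts on this set as follows. Let $s_v^\epsilon$ act on a reduced word $u$. If $u$ contains an occurrence of $s_v^{-\epsilon}$ preceded only by letters commuting with $s_v$, delete that occurrence. Otherwise, prepend $s_v^\epsilon$. We check the following.
\begin{enumerate}
\item The operation is well defined on shuffle classes, and the result is again reduced.
\item The action of $s_v^{\epsilon}$ is inverse to that of $s_v^{-\epsilon}$.
\item The actions of $s_v$ and $s_w$ commute whenever $vw$ is an edge of $\Gamma$.
\end{enumerate}
Together these give an action of $A_\Gamma$. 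Acting by the word $w$ on the empty word then produces a reduced word $u(w)$ representing $g$.

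The key claim is that any two reduced words representing the same element are shuffle-equivalent. This follows because applying $g$ to the empty word recovers the shuffle class of any reduced representative. By induction on length, a reduced word $u$ acts on the empty word by prepending its letters one at a time, so the result is exactly $u$. Consequently all reduced words representing $g$ have the same length, and that common length is $|g|$, since a minimal word is necessarily reduced.

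So if $w$ is not minimal, then $w$ is not reduced. It therefore contains a pair $s_v^\epsilon\cdots s_v^{-\epsilon}$ whose intermediate letters all commute with $s_v$. Shuffling $s_v^\epsilon$ rightward past these letters makes the pair adjacent, and we cancel it.

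\textbf{Step 2: iterate.} Each cancellation shortens the word by two, so the process terminates. It can only stop at a reduced word, which is minimal by Step 1. By the key claim, every minimal representative is shuffle-equivalent to the one we reach. Hence any minimal representative can be obtained from $w$ by further shuffles, which proves the statement.

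The routine but fiddly part is the verification of items 1--3 of the action, in particular item 1. There one must handle the case where several occurrences of $s_v^{-\epsilon}$ could be deleted. Reducedness implies that at most one occurrence of $s_v^{\pm1}$ is preceded only by letters commuting with $s_v$, and this removes the ambiguity.
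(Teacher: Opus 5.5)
The paper does not prove this proposition; it quotes it from Hermiller--Meier, where it comes out of the normal form theory for graph products. So there is no internal argument to match. Your route is correct and self-contained: you let $A_\Gamma$ act, van der Waerden style, on shuffle classes of reduced words. It also gives more than the statement. It shows at once that reduced words are exactly the geodesics, and that any two geodesics representing the same element are shuffle-equivalent. That stronger form is how the paper uses the proposition, for instance in the discussion right after the definition of the extension graph. The citation, by contrast, covers general graph products; your argument should extend to those if letters are replaced by syllables.

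Two points in your item 1 need tightening.

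\emph{The uniqueness remark is off.} Suppose ``commuting with $s_v$'' includes $s_v^{\pm1}$. Then $s_v^{-1}s_v^{-1}$ is reduced, and both of its letters are preceded only by letters commuting with $s_v$. If instead it means letters from the link of $v$, uniqueness is automatic and reducedness plays no role. State the rule directly: delete the first occurrence of $s_v^{\pm1}$ if it equals $s_v^{-\epsilon}$ and every letter before it lies in the link of $v$.

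\emph{The real content of well-definedness is left cancellation.} Item 1 actually needs: if $s_v^{-\epsilon}u_1$ and $s_v^{-\epsilon}u_1'$ are shuffle-equivalent, then so are $u_1$ and $u_1'$. This is easy but should be proved.
\begin{itemize}
\item Shuffles preserve reducedness, so in a chain of swaps between reduced words you only ever exchange letters of distinct adjacent generators.
\item Hence the relative order of the $s_v^{\pm1}$-occurrences never changes, and the leading $s_v^{-\epsilon}$ at the start and at the end of the chain is the same occurrence.
\item Deleting that occurrence from every word in the chain turns each swap into either the same swap or the identity.
\end{itemize}
With these two fixes, items 2 and 3 and the rest of your argument go through as written.
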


By a \emph{prefix} of an element $g\in A_\Gamma$, we mean an element $h\in A_\Gamma$ such that $|g|=|h|+|h^{-1}g|$. In other words, $h$ can be represented by an initial subword of some minimal representative of~$g$.

The \emph{syllable length} $\syl g$ of $g\in A_\Gamma$ is the minimal $k$ such that we can write $g=s_1^{n_1}\dots s_k^{n_k}$. A word $\gamma$ is said to be \emph{syllable-reduced} if it has the form $s_1^{n_1}\dots s_k^{n_k}$ and $k$ is the syllable-length of the element of $A_\Gamma$ that it represents. 

\bsh{Convention}
Let $G$ be a group, and let $g,h\in G$. We write $g^h$ to denote the element $hgh^{-1}\in G$. Note that this results in the awkwardness that $(g^h)^k=g^{kh}$, but matches well with the left action of $G$ on its Cayley graph.
\esh

A central object in this paper will be the \emph{extension graph} of a graph, which was introduced by Kim--Koberda \cite{kimkoberda:embedability}. The following is a reformulation of \cite[Def. 1.2]{kimkoberda:embedability}.

\begin{definition}[Extension graph] \label{def:extension_graph}
Let $\Gamma$ be a graph. For each $g\in A_\Gamma$, let $g\cdot\Gamma$ be an isomorphic copy of $\Gamma$, with vertices $\{g\cdot v_i\}$. To obtain the \emph{extension graph} $\Gamma^\ext$ of $\Gamma$ from the disjoint union $\bigsqcup_{g\in A_\Gamma}g\cdot\Gamma$, 
\begin{itemize}
\item   identify vertices $g\cdot v_i\in g\cdot\Gamma$ and $h\cdot v_j\in h\cdot\Gamma$ whenever $s_i^g=s_j^h\in A_\Gamma$,
\item   then identify each pair of edges with the same endpoints.
\end{itemize}
\end{definition}

For instance, the subgraphs $g\cdot\Gamma$ and $gs_1^n\cdot\Gamma$ of $\Gamma^\ext$ are glued along $\star(g\cdot v_1)$ for all $n\ne0$, because $s_i^g=s_i^{gs_1}$ exactly when $s_i$ commutes with $s_1$. 

More generally, it is a consequence of Proposition~\ref{prop:shuffling} that if $g$ and $h$ are elements of $A_\Gamma$ such that $g\cdot v=h\cdot v$ for some vertex $v\in\Gamma$, then $g$ and $h$ admit minimal representatives that differ by a terminal substring of letters that commute with $s_v$.

Observe that in Definition~\ref{def:extension_graph}, vertices $g\cdot v_i$ and $h\cdot v_j$ can only be identified if $i=j$.

\begin{definition}[Types and labels]
For $v\in\Gamma$, we say that the vertices of $A_\Gamma\cdot v\subset\Gamma^\ext$ are of \emph{type} $s_v$, and write $\type(v)=s_v$. A \emph{label} of a vertex $w\in A_\Gamma\cdot v$ is any element $g\in A_\Gamma$ such that $g\cdot v=w$. The \emph{minimal label} of $w$ is the label with minimal word-length.
\end{definition}

\begin{remark}
The group $A_\Gamma$ has a natural action on $\Gamma^\ext$. With our convention of using left conjugation, this is a left action given by $g(h\cdot v)=gh\cdot v$. (Note that this differs from \cite{kimkoberda:embedability}.) The action is transitive on copies of $\Gamma$.

As noted above, in order for copies $g_1\cdot \Gamma$ and $g_2\cdot \Gamma$ that make up $\Gamma^\ext$ to intersect, it is sufficient that $g_1$ and $g_2$ differ by a \emph{suffix} that is a power of a single letter. In general, these two copies of $\Gamma$ are not translates of one another by powers of a single generator, because the action is a left action: consider $g_1=s_1s_2$ and $g_2=s_1s_2s_1$, where $s_1$ and $s_2$ are non-commuting standard generators of $A_\Gamma$. This is the same behaviour as the edges of a Cayley graph.

With this comparison to Cayley graphs in mind, one can think of building a ``path of copies of $\Gamma$'' inside $\Gamma^\ext$ by considering increasingly long suffixes. This idea will be fundamental to the construction of Section~\ref{sec:building_qie}.
\end{remark}

According to \cite{kimkoberdalee:finite}, the extension graph of a graph $\Gamma$ can be obtained from $\Gamma$ by repeatedly \emph{doubling}.

\begin{definition}[Double] \label{def:double}
Let $\Gamma$ be a graph, and let $v$ be a vertex of $\Gamma$. The \emph{double} of $\Gamma$ along $v$ is the graph obtained from two disjoint copies $(\Gamma,0)$ and $(\Gamma,1)$ of $\Gamma$ by identifying $(w,0)\sim(w,1)$ for all $w\in\star(v)$.
\end{definition}

The following theorem provides a connection between extension graphs and quasiisometric embeddings. It is a special case of \cite[Cor.~6.6]{baderbensaidpetyt:quasiisometric:rigidity}, which itself relies on the results of \cite{baderbensaidpetyt:from}. A \emph{leaf} of a graph is a vertex of valence at most one. A graph is \emph{square-free} if it does not contain the cycle $C_4$ as an induced subgraph.




\begin{theorem} \label{thm:qie_induces}
Let $\Gamma$ and $\Lambda$ be triangle-free graphs. If $\Gamma$ has no leaves and $\Lambda$ is square-free, then every quasiisometric embedding $A_\Gamma\to A_\Lambda$ induces a combinatorial embedding $\Gamma^\ext\to\Lambda^\ext$.
\end{theorem}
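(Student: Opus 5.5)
This is stated as a special case of \cite[Cor.~6.6]{baderbensaidpetyt:quasiisometric:rigidity}, so the plan is to check that the hypotheses there hold, and to recall the mechanism. Everything rests on the structure of maximal flats. Since $\Gamma$ and $\Lambda$ are triangle-free, every maximal standard flat in $A_\Gamma$ or $A_\Lambda$ has dimension at most two. It is a coset of $\langle s_v,s_w\rangle\cong\Z^2$ for an edge $vw$, or a single line in degenerate cases. A vertex $g\cdot v$ of $\Gamma^\ext$ corresponds to the family of parallel standard lines $g\langle s_v\rangle g^{-1}$-cosets, that is, to a parallelism class of standard geodesics of type $v$. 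Two vertices are adjacent exactly when the corresponding lines span a standard flat.

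The first step is to invoke the coarse-geometric input of \cite{baderbensaidpetyt:from}. It says that a quasiisometric embedding $F:A_\Gamma\to A_\Lambda$ sends each maximal $2$-flat coarsely into a finite union of standard flats. The no-leaves hypothesis on $\Gamma$ is used here: every vertex of $\Gamma$ lies in an edge, so each standard line of $A_\Gamma$ is the intersection of two non-parallel standard flats. Such lines are intersections of maximal flats and are therefore detected coarsely. From this we would conclude that $F$ carries each standard line of $A_\Gamma$ to within bounded Hausdorff distance of a standard line of $A_\Lambda$. The square-free hypothesis on $\Lambda$ should rule out the image of a flat being a ``product of trees'' spread over several parallel directions. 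If $\Lambda$ contained an induced $C_4$, then $A_\Lambda$ would contain $F_2\times F_2$, and quasiflats could wander. Excluding this forces the image of a standard flat to lie close to a single standard flat, and hence forces the image of a line to lie close to a single standard line rather than a union.

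Second, we define $f(g\cdot v)$ to be the vertex of $\Lambda^\ext$ given by the parallelism class of the image line. This is well defined because parallel lines stay at bounded Hausdorff distance and $F$ is coarse. Adjacency is preserved, since lines spanning a flat map to lines lying in a common standard flat. Injectivity follows from the lower quasiisometric bound. Lines in distinct parallelism classes diverge, or at least have unbounded Hausdorff distance, so their images cannot be parallel.

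The main obstacle is the first step: showing that the images of flats are genuinely close to single standard flats. This is where the cited results carry the weight, so in the paper I would simply verify that triangle-free with no leaves, and triangle-free with square-free, satisfy the hypotheses of \cite[Cor.~6.6]{baderbensaidpetyt:quasiisometric:rigidity}, and then conclude.
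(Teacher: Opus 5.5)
Your approach matches the paper's. The paper gives no independent argument: it records the statement as a special case of \cite[Cor.~6.6]{baderbensaidpetyt:quasiisometric:rigidity}, which rests on \cite{baderbensaidpetyt:from}, so checking that those hypotheses are met is all that is needed.

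One caution about your heuristic sketch, however. Square-freeness does not by itself put top-rank (quasi)flats of $A_\Lambda$ near finitely many standard flats. If $w$ has two neighbours and $\gamma$ is a non-standard geodesic in the free group $A_{\mathrm{lk}(w)}$, then $\langle t_w\rangle\times\gamma$ is a flat lying near no finite union of standard flats. What square-freeness actually provides is that the lines along which three or more half-flats of $A_\Lambda$ branch are standard. Also, ``no leaves'' means valence at least two, not merely lying on an edge, and that is what makes every standard line of $A_\Gamma$ such a branching line.
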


Note that the conditions of \cref{thm:qie_induces} are satisfied if $\Gamma$ and $\Lambda$ are both cycles of length at least five.

\section{From extension graphs to homomorphisms} \label{sec:induced_implies_subgroup}

In \cite[Thm~1.3]{kimkoberda:embedability}, Kim--Koberda show that if $\Gamma$ is an induced subgraph of $\Lambda^\ext$, then $A_\Gamma$ can be found as a subgroup of $A_\Lambda$. Moreover, the proof of \cite[Cor~1.15]{kimkoberda:embedability} shows that $A_\Gamma$ can be found as an undistorted subgroup of $A_\Lambda$. They give three proofs for this, with two going through mapping class groups. The most elementary argument, though, is a combination of two observations:
\begin{itemize}
\item   If $\Lambda'$ is the double of $\Lambda$ along some vertex $w$, then $A_{\Lambda'}$ is isomorphic to the kernel of the map $A_\Lambda\to\sfrac\Z{2\Z}$ sending the generator $t_w$ to 1 and all other generators to 0 \cite[Ex.~1.4]{bestvinakleinersageev:asymptotic}.
\item   Every finite induced subgraph of $\Lambda^\ext$ is contained in an induced subgraph of $\Lambda^\ext$ obtained by a finite sequence of doubles \cite[Lem.~3.1]{kimkoberda:embedability}.
\end{itemize}
Together, these observations show that $A_\Gamma$ is isomorphic to a parabolic subgroup of a finite-index subgroup of $A_\Lambda$, and hence is undistorted.

Note, however, that this $A_\Gamma$-subgroup of $A_\Lambda$ may \emph{a priori} have nothing to do with the $\Gamma$-subgraph of $\Lambda^\ext$ that we began with. We address this with the following refinement of \cite[Thm~1.3]{kimkoberda:embedability}. Let us call the inclusion homomorphism of an undistorted subgroup \emph{non-distorting}.



\begin{proposition} \label{prop:induced_to_homo}
Let $\Gamma$ and $\Lambda$ be finite graphs. For any embedding $\iota:\Gamma\to\Lambda^\ext$ with induced image, there is a non-distorting homomorphism $F:A_\Gamma\to A_\Lambda$ that induces an embedding of $\Gamma^\ext$ in $\Lambda^\ext$ as an induced subgraph, sending $1\cdot\Gamma$ to $\iota(\Gamma)$.
\end{proposition}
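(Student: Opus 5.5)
Define $F$ directly from $\iota$. Each vertex $w$ of $\Lambda^\ext$ has the form $w=h\cdot u$ with $u\in\Lambda$ and a minimal label $h\in A_\Lambda$, and it determines the conjugate $t_u^h$. This conjugate is independent of the chosen label, by \cref{def:extension_graph}. So for each vertex $v\in\Gamma$ with $\iota(v)=h_v\cdot u_v$, set $F(s_v)=t_{u_v}^{h_v}$.

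\textbf{$F$ is a homomorphism.} Suppose $vv'$ is an edge of $\Gamma$. Then $\iota(v)\iota(v')$ is an edge of $\Lambda^\ext$, and by construction of the extension graph (Kim--Koberda) adjacent vertices correspond to commuting conjugates. Hence $F(s_v)$ and $F(s_{v'})$ commute, and $F$ extends to a homomorphism $A_\Gamma\to A_\Lambda$.

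\textbf{Induced map on extension graphs.} Define $f(g\cdot v)=F(g)\cdot\iota(v)$, using the left action of $A_\Lambda$ on $\Lambda^\ext$. This is well defined: if $g\cdot v=g'\cdot v$, then $g^{-1}g'$ lies in the centraliser $\langle \operatorname{star}(v)\rangle$ (by \cref{prop:shuffling}). So $F(g^{-1}g')$ is a product of elements commuting with $F(s_v)$, and it fixes $\iota(v)$, because the stabiliser of the vertex $h\cdot u$ is the centraliser of $t_u^h$. Edges of $g\cdot\Gamma$ go to edges of $F(g)\cdot\iota(\Gamma)$, and $1\cdot\Gamma$ goes to $\iota(\Gamma)$.

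\textbf{Injectivity, inducedness and non-distortion.} These are the substantive part, and I would get them all at once from the doubling picture described before the statement. By \cite[Lem.~3.1]{kimkoberda:embedability}, the finite induced subgraph $\iota(\Gamma)$ lies in an induced subgraph $\Lambda_k\subset\Lambda^\ext$ obtained by iterated doubling $\Lambda=\Lambda_0,\Lambda_1,\dots,\Lambda_k$. I would carry out the doublings concretely inside $\Lambda^\ext$: doubling along $w$ adds the translate by $t_w$ of the current subgraph.

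By induction on $k$, I would verify two things:
\begin{itemize}
\item[(a)] the subgroup $H_k$ generated by the conjugates attached to vertices of $\Lambda_k$ is a finite-index subgroup of $A_\Lambda$ (an iterated kernel as in \cite[Ex.~1.4]{bestvinakleinersageev:asymptotic}), and is isomorphic to $A_{\Lambda_k}$ via the map sending standard generators to these conjugates;
\item[(b)] under this isomorphism, $\Lambda_k^\ext$ is identified $H_k$-equivariantly with $\Lambda^\ext$.
\end{itemize}
The point for (b) is that both graphs are built from the same conjugacy data, and $H_k$ is finite index, so it acts with the same vertex stabilisers.

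Granting this, $F$ is the composition of the parabolic inclusion $A_\Gamma\cong A_{\iota(\Gamma)}\le A_{\Lambda_k}$ with the isomorphism $A_{\Lambda_k}\cong H_k\le A_\Lambda$. Parabolic subgroups are undistorted, and finite-index subgroups are undistorted, so $F$ is injective and non-distorting. For inducedness: the extension graph of an induced subgraph $\Gamma\subset\Lambda_k$ embeds as an induced subgraph of $\Lambda_k^\ext$, because an edge between $g\cdot v$ and $g'\cdot v'$ requires commuting conjugates, and commutation inside the parabolic $A_\Gamma$ is detected in $A_{\Lambda_k}$. Transporting this through (b) gives that $f$ is an embedding onto an induced subgraph.

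I expect the main obstacle to be step (b): checking that a single doubling inside $\Lambda^\ext$ yields the correct identification of extension graphs, with vertex stabilisers matching the centralisers. I would first settle this for one doubling by an explicit normal-form computation with \cref{prop:shuffling}, and then iterate.
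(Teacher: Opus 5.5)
There is a genuine gap, and it is at (a), not (b). Your homomorphism and well-definedness steps are fine. The trouble is the choice $F(s_v)=t_{u_v}^{h_v}$: that map is not injective in general, so (a), which carries injectivity and non-distortion, is false.

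Take the simplest case. Let $\Gamma=\Gamma_1\cup\Gamma_2$ be the double of $\Lambda$ along $v_0$, with $\iota(\Gamma_1)=1\cdot\Lambda$, $\iota(\Gamma_2)=t_0\cdot\Lambda$ and $\iota(v_0)=w_0$. This is exactly Kim--Koberda's double, and its image is induced. Pick $u\in\Gamma_1$ that is neither equal nor adjacent to $v_0$, let $u'\in\Gamma_2\ssm\Gamma_1$ be its copy, and write $t_u=\type(\iota(u))$. Your recipe gives $F(s_{v_0})=t_0$, $F(s_u)=t_u$ and $F(s_{u'})=t_0t_ut_0^{-1}$. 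Hence the reduced, and so nontrivial, word $s_{v_0}s_us_{v_0}^{-1}s_{u'}^{-1}$ lies in $\ker F$.

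Correspondingly, $f(s_{v_0}\cdot u)=t_0\cdot\iota(u)=\iota(u')=f(1\cdot u')$, although these are distinct vertices of $\Gamma^\ext$ (they have different types). So (a) already fails for $k=1$: the conjugates attached to $1\cdot\Lambda\cup t_0\cdot\Lambda$ generate all of $A_\Lambda$, and the natural map from $A_{\Lambda_1}$ has nontrivial kernel. You have also misremembered the Bestvina--Kleiner--Sageev example: their isomorphism of $A_{\Lambda_1}$ with the index-two kernel sends the doubling generator to $t_0^2$, not to $t_0$.

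The missing idea is that a vertex of $\Lambda^\ext$ determines the image of a generator only up to taking powers, and the powers must be chosen according to $\iota$. This is what the paper does. It reduces, as you do, to a single double via Kim--Koberda's lemma and composes along the doubling sequence. For $\iota(\Gamma_1)=1\cdot\Lambda$ and $\iota(\Gamma_2)=t_0^n\cdot\Lambda$ with $n\ge1$, it sets:
\begin{itemize}
\item $F(s_{v_0})=t_0^{n+1}$;
\item $F(s_v)=\type(\iota(v))$ for $v\in\Gamma_1\ssm\{v_0\}$;
\item $F(s_v)=t_0^n\type(\iota(v))t_0^{-n}$ for $v\in\Gamma_2\ssm\Gamma_1$.
\end{itemize}
The exponent must not divide $n$. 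For instance, if $n=2$, sending $s_{v_0}$ to $t_0^2$ would again kill $s_{v_0}s_us_{v_0}^{-1}s_{u'}^{-1}$. The paper then proves $|F(g)|\ge|g|$ directly. It writes $g=g_1\cdots g_k$ alternately in $A_{\Gamma_1}$ and $A_{\Gamma_2\ssm\Gamma_1}$, and uses that powers of $t_0^{n+1}$ can never completely cancel the prefix $t_0^n$ of $F(g_i)$.

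Since $t_0^{n+1}$ and $t_0$ determine the same vertex of $\Lambda^\ext$, this $F$ still sends $1\cdot\Gamma$ to $\iota(\Gamma)$, and your formula for $f$ and its well-definedness carry over unchanged. So your planned normal-form verification cannot succeed as stated, because your map is not injective; the map itself has to be changed.
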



\begin{proof}
We first consider the case where $\iota(\Gamma)\subseteq 1\cdot\Lambda$.
In this case, $\iota(\Gamma)$ corresponds to a parabolic subgroup of $A_\Lambda$,
which is undistorted. Equivalently, the map
\[
F(s_v)=\type(\iota(v))
\]
extends to a non-distorting homomorphism $A_\Gamma\to A_\Lambda$.

In general, by \cite[Lem.~3.1]{kimkoberda:embedability}, the induced
subgraph $\iota(\Gamma)$ is contained in an induced subgraph $\Lambda'$ of
$\Lambda^\ext$ obtained from $1\cdot\Lambda$ by taking finitely many doubles. Thus, by the argument above, there exists an undistorted homomorphism $F':A_\Gamma\to A_{\Lambda'}$ inducing $\iota$. It therefore remains
to show that, whenever $\Lambda'$ is obtained from $\Lambda$ by finitely many
doubles, the corresponding inclusion can be realised by a non-distorting
homomorphism $A_{\Lambda'}\to A_\Lambda$. Since a finite sequence of doubles
can be handled by induction, it suffices to treat the case of a single double.

Thus assume that $\Gamma$ is the double of $\Lambda$ along a vertex $v_0$.
More precisely, write
\[
\Gamma=\Gamma_1\cup_{\star(v_0)}\Gamma_2,
\]
where $\Gamma_1$ and $\Gamma_2$ are copies of $\Lambda$ glued along
$\star(v_0)$. We may assume that
\[
\iota(\Gamma_1)=1\cdot\Lambda,
\qquad
\iota(\Gamma_2)=t_0^n\cdot\Lambda,
\]
where $t_0=t_{w_0}$ is a standard generator of $A_\Lambda$,
$n\in\mathbb Z\ssm\{0\}$, and $\iota(v_0)=w_0$. We may assume $n\geq 1$, by applying the isomorphism taking $t_0$ to its inverse and fixing all the other standard generators of $A_\Lambda$.

Inspired by \cite[Ex.~1.4]{bestvinakleinersageev:asymptotic}, define a map $F:A_\Gamma\to A_\Lambda$ by declaring 
\[
F(s_v) = \begin{cases}
    t_0^{n+1} & \text{if } v=v_0 \\
    \type(\iota(v)) & \text{if } v\in\Gamma_1\ssm\{v_0\} \\
    t_0^n\type(\iota(v))t_0^{-n} & \text{if } v\in\Gamma_2\ssm\Gamma_1.
    \end{cases}
\]
It is easy to see that $F$ extends to a homomorphism. We must show that it is non-distorting, for then it will induce a map of extension graphs with the desired property by construction. In particular, it will show that $F$ is injective, which is not necessarily obvious from its definition. Since group homomorphisms from finitely generated groups are always Lipschitz, it suffices to show that $F$ is colipschitz. This is equivalent to showing that $|F(g)|$ is linearly lower-bounded by $|g|$ for all $g\in A_\Gamma$, because $\dist(F(g),F(h))=\dist(1,F(g^{-1}h))$.

Let $g\in A_\Gamma$. Using the decomposition of $A_\Gamma$ coming from the
double, write
\[
g=g_1g_2\cdots g_k,
\]
where $g_i\in A_{\Gamma_1}$ for $i$ odd and
$g_i\in A_{\Gamma_2\ssm\Gamma_1}$ for $i$ even. We choose such an expression
so that concatenating minimal representatives for the $g_i$ gives a minimal
representative for $g$, and we assume $g_i\ne1$ for all $i>1$.

For each even $i$, we have $F(g_i)=t_0^nh_it_0^{-n}$, where $h_i$ uses only letters with types coming from
$\iota(\Gamma_2\ssm\Gamma_1)$. Since $\Gamma_1$ and $\Gamma_2$ are glued
along $\star(v_0)$, no vertex of $\Gamma_2\ssm\Gamma_1$ is adjacent to
$v_0$. Thus $t_0$ does not commute with any letter appearing in $h_i$.

Because $F(s_{v_0})=t_0^{n+1}$, the element $F(g_{i-1})$ cannot completely cancel the $t_0^n$ at the beginning of $F(g_i)$ for $i$ even. Thus any cancellation that occurs between $F(g_{i-1})$ and $F(g_i)$ other than the possible cancellation of $t_0^{-(n+1)}t_0^n$ must involve a letter of $F(g_i)$ that commutes with $t_0$. In other words, it comes from a vertex $v\in\Gamma_1$ that is adjacent to $v_0$. But then $v\in\Gamma_2$, so no such cancellation can occur. This shows that $|F(g)|\ge|g|$ for all $g\in A_\Gamma$, as desired. 
\end{proof}

In order to motivate the more difficult constructions of \cref{sec:building_qie}, we now explain how one can come up with the homomorphism $F$ in the above proof by looking at extension graphs. 

That is, starting from $\Gamma=\Gamma_1\cup_{\star(v_0)}\Gamma_2$ and an injective map $\iota:\Gamma\to\Lambda^\ext$ where $\iota(\Gamma_1)=1\cdot\Lambda$ and $\iota(\Gamma_2)=t_0^n\cdot\Lambda$, with $\iota(v_0)=w_0$, we extend to an embedding $\Gamma^\ext\to\Lambda^\ext$, and use that to construct a ``route map'' $A_\Gamma\to A_\Lambda$, inspired by \cite[\S9.1]{baderbensaidpetyt:quasiisometric:rigidity}. 

\bsh{Blocks}
We refer to $\iota:\Gamma\to (1\cdot\Lambda)\cup(t_0^n\cdot\Lambda)$ as a ``block'' $B(1)$ (we imagine the block as a copy of $\Gamma$ living inside $\Lambda^\ext$, keeping track of the data of how it is embedded). For $h\in A_\Lambda$, we define $B(h)$ to be the map $\Gamma\to\Lambda^\ext$ given by $v\mapsto h\cdot \iota(v)$, whose image is $(h\cdot \Lambda)\cup(ht_0^n\cdot \Lambda)$. 
\esh

Since $\Gamma^\ext$ is obtained from $\Gamma$ by repeatedly doubling, our strategy for finding $\Gamma^\ext$ inside $\Lambda^\ext$ will be to find a way to mimic the doubling procedure inside $\Lambda^\ext$ with blocks instead of copies of $\Lambda$.


\bsh{Doubling}
Each vertex of the block $B(h)$ can be written as $ht_0^\epsilon\cdot w$ for some $w\in\Lambda$ and some $\epsilon\in\{0,n\}$. For an integer $a$, we define the \emph{$a^\mathrm{th}$ double} of the block $B(h)$ along $ht_0^\epsilon\cdot w$ to be 
\[
\doub_{ht_0^\epsilon\cdot w}^a(B(h)) \,=\, B(ht_0^\epsilon t_w^at_0^{-\epsilon}).
\]
It is worth thinking about how the double of $B(h)$ intersects $B(h)$ for each of the two values of $\epsilon$.
\esh

We will use this doubling map for reconstructing $\Gamma^\ext$ inside $\Lambda^\ext$, but only in the case when $w\ne w_0$. The issue when $w=w_0$ is that the $n^\mathrm{th}$ double of $B(h)$ along $h\cdot w_0$ intersects $B(h)$ in a copy of $\Lambda$. We handle this by using a different operation that we call \emph{gliding}. The name ``glide'' makes more sense in the context of \cref{sec:building_qie}, where the operation looks like a glide. We are using the same name here to try to show the similarities.

\bsh{Gliding}
For an integer $a$, we define the \emph{$a^\mathrm{th}$ glide} of $B(h)$ to be 
\[
\glide_{h\cdot w_0}^a(B(h)) \,=\, B(ht_0^{a(n+1)}).
\] 
Note that the image of the glide of $B(h)$ is $(ht_0^{a(n+1)}\cdot\Lambda)\cup(ht_0^{a(n+1)+n}\cdot\Lambda)$, which intersects $B(h)$ only in the star of $h\cdot w_0$.
\esh

We can now use doubling and gliding to define a combinatorial embedding $f:\Gamma^\ext\to \Lambda^\ext$. 

\bsh{Reconstructing extension graphs}
We define $f|_{1\cdot \Gamma}=\iota$ and proceed by induction on syllable length. Assume we have defined $f$ on $g\cdot \Gamma$ for all $g\in A_\Gamma$ with $\syl g\leq d$ in such a way that $f|_{g\cdot \Gamma}$ is a block. Given $g\in A_\Gamma$ with $\syl g = d+1$, we can write $g=g's_v^a$ for some $g'\in A_{\Gamma}$ with $\syl g=d$ and some $v\in\Gamma$. 
If $v\neq v_0$, then we define 
\[
f(g\cdot\Gamma) \,=\, \doub^a_{f(g'\cdot v)}(f|_{g'\cdot\Gamma}).
\]
If instead $v=v_0$, then we define
\[
f(g\cdot\Gamma) \,=\, \glide^a_{f(g'\cdot v_0)}(f|_{g'\cdot\Gamma}).
\]
\esh

One can check that $f$ is well defined. This follows from the fact that doubling/gliding along adjacent vertices commutes. One can also directly check that $f$ is an embedding, or (as we will do in Section~\ref{sec:building_qie}) deduce it from the existence of a quasiisometric embedding $F:A_\Gamma\to A_\Lambda$ that induces $f$, which we shall now construct.

\bsh{The lifted map $F$}
We define $F$ to be the ``route map'' that, for $g\in A_{\Gamma}$, outputs the origin cell of the image of the block $f(g\cdot\Gamma)$. That is, $F(g)=h$, where $f|_{g\cdot\Gamma}=B(h)$. 
\esh

Now the only thing left to show is that $F$ is a quasiisometric embedding. But it is easy to see that this map we constructed is the same as the map in \cref{prop:induced_to_homo} and so we are done.

In the next section, in order to define a quasiisometric embedding we will first define a combinatorial embedding of extension graphs and take its ``route map'', sending an element to one of the labels in the block corresponding to its image. Then we will have to work slightly harder than in \cref{prop:induced_to_homo} in order to show it is in fact a quasiisometric embedding.

\section{Constructing quasiisometric embeddings between cycle RAAGs} \label{sec:building_qie}

In this section we prove the bulk of \cref{mthm:cycles_qie}. We construct quasiisometric embeddings $A_{C_m}\to A_{C_n}$ when $m=n+p(n-4)+q(n-2)$ and $p>0$.

When $n=4$ these were constructed by Rull in \cite{rull:embedding} and when $n=5$ or $n=6$, \cite[Thm~1.12, Cor.~1.15]{kimkoberda:embedability} show that there exist homomorphisms that are quasiisometric embeddings. Thus we will assume $m,n>6$, and fix a way of writing $m=n+p(n-4)+q(n-2)$ with $p>0$. When $q>0$ our construction furthermore yields a quasiisometric embedding that is not at finite distance from a homomorphism, even when there is known to be a quasiisometric embedding that is close to a homomorphism, see \cref{lem:no_homo}.

\begin{remark} \label{rem:n_more_than_six}
When $q=0$ our construction will, up to finite distance, yield a homomorphism (see \cref{lem:homo}), and thus we recover one direction of \cite[Thm 1.12, Cor. 1.15]{kimkoberda:embedability} for $n>6$. The proof of \cite[Thm 1.12, Cor. 1.15]{kimkoberda:embedability} constructs the homomorphism using embeddings in mapping class groups; we do not require that.
    
We do not believe that the assumption $n>6$ is fundamentally necessary: there should be modifications of the arguments that work for these values of $n>4$ and construct quasiisometric embeddings that are not at finite distance from a homomorphism when $q>0$.
\end{remark}

To improve readability, let us write $\Gamma=C_m$ and $\Lambda=C_n$. Write $\{v_i\,:\,i\in\sfrac{\Z}{m\Z}\}$ and $\{w_j\,:\,j\in\sfrac{\Z}{n\Z}\}$ for the vertex sets of $\Gamma$ and $\Lambda$, respectively. Denote the standard generator of $A_\Gamma$ corresponding to the vertex $v_i$ by $s_i$, and the standard generator of $A_\Lambda$ corresponding to $w_j$ by $t_j$. Write $S=\{s_i\}$ and $T=\{t_j\}$. 

With the vertices identified this way, there are two natural maps that we shall need to consider. Firstly, we have the \emph{shift} map $\sigma$ on $\Gamma=C_m$, given by $\sigma(v_i)=v_{i+1}$. Secondly, we consider a map which is essentially an extension of a shift map to $\Lambda^\ext$. Namely, we define the \emph{rotation} map $\rho:\Lambda^\ext\to\Lambda^\ext$ by setting

\[
\rho(t_{k_1}^{a_1}t_{k_2}^{a_2}\dots t_{k_q}^{a_q}\cdot w_j) \,=\, t_{k_1+1}^{a_1}t_{k_2+1}^{a_2}\dots t_{k_q+1}^{a_q}\cdot w_{j+1}.
\]
If we think of $\Lambda^\ext$ as having a ``central'' copy of $\Lambda$, from which the rest is obtained by repeatedly doubling, then the map $\rho$ rotates $\Lambda^\ext$ about the centre of that copy of $\Lambda$.

\bsh{Blocks} \label{sh:blocks} 
Let $h\in A_\Lambda$. We specify two subgraphs $B^+(h)$ and $B^-(h)$ of $\Lambda^\ext$, which we refer to, respectively, as the \emph{positive} and \emph{negative} \emph{basic blocks emanating from $h$}. Recall that $p>0$ and $q\ge0$ are fixed integers such that $m=n+p(n-4)+q(n-2)$.
For $\eps\in\{+,-\}$, the basic block $B^\eps(h)$ is built as follows. 
\begin{itemize}
\item   Let $B^\eps_0(h)=h\cdot \Lambda\subseteq\Lambda^\ext$. 
\item   For $i\in\{0,\dots,p-1\}$, let $B^\eps_{i+1}(h)=t_{2(-1)^i\eps}\cdot B^\eps_i(h)$. 
\item   For $j\in\{p,\dots,p+q-1\}$, let $B^\eps_{j+1}(h)=t_{(-1)^j\eps}t_{2(-1)^j\eps}\cdot B^\eps_j(h)$.
\item   We define $B^\eps(h)=\bigcup_{k=0}^{p+q} B^\eps_k(h)$.
\end{itemize}
See Figure~\ref{fig:basic_blocks}.
When $h=1$, we sometimes write $B^\eps=B^\eps(1)$ as in Figure~\ref{fig:basic_blocks}.

The term \emph{block} will refer to a subgraph of $\Lambda^\ext$ of the form $\rho^\kappa(B^\eps(h))$ for some $\kappa\in\Z$, $\eps\in\{+,-\}$, $h\in A_\Lambda$. That is, a block is something obtained by starting at some $h\cdot \Lambda$, building a basic block from there, and then rotating some number of times. We refer to $\eps$ as the \emph{sign} of the block $\rho^\kappa(B^\eps(h))$.

Given a block $B=\rho^\kappa(B^\eps(h))$ and a vertex $w\in B$, we define the \emph{depth} of $w$ inside $B$ to be 
\[
\dep(w;B) \,=\, \min\{i\,:\,\rho^{-\kappa}(w)\in B^\eps_i(h)\}.
\]
In other words, the depth of $w$ is the first time at which $\rho^{-\kappa}(w)$ appears in the construction of $B^\eps(h)$.

\begin{figure}[ht]
\begin{center} \makebox[0pt]{\begin{minipage}{1.4\textwidth}
\includegraphics{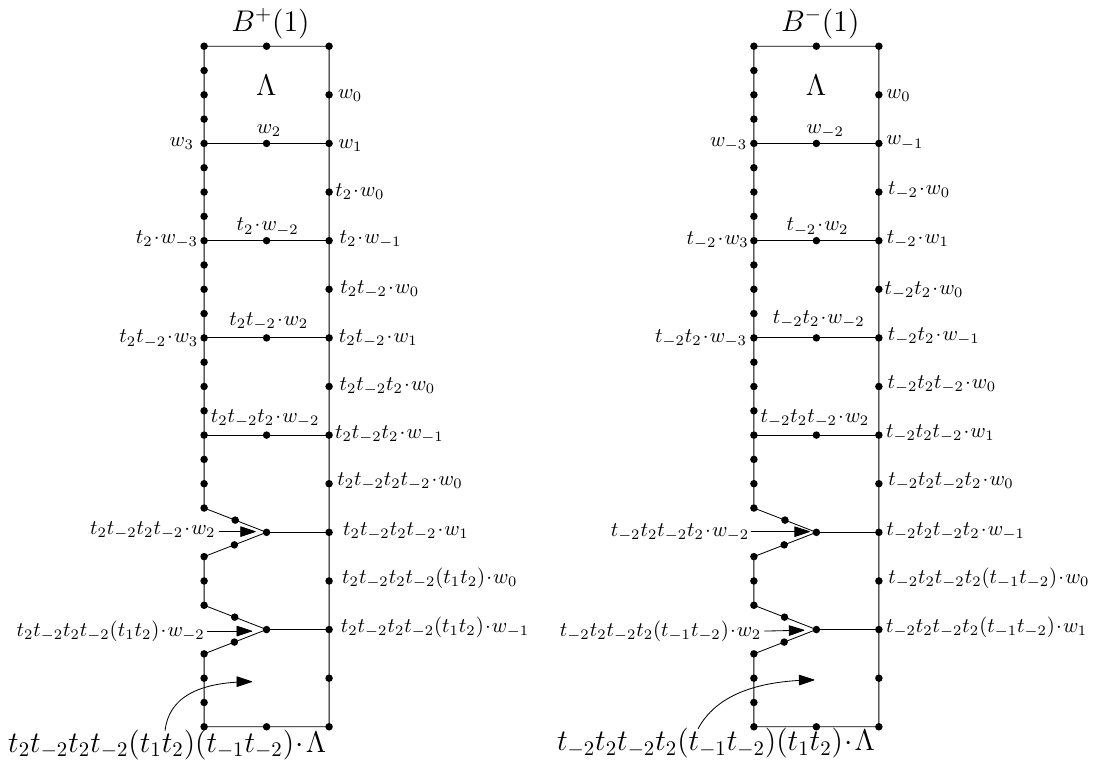}\centering 
\caption{The two basic blocks emanating from 1 in $\Lambda^\ext$, illustrated with $p=4$ and $q=2$.} \label{fig:basic_blocks}
\end{minipage}}\end{center}
\end{figure}
\esh

\bsh{Marked blocks} \label{sh:marked_blocks}
By construction, the basic blocks $B^\eps(h)$ have natural boundary paths of length $n+p(n-4)+q(n-2)=m$. We can therefore fix identifications $\iota_{\eps,h}:C_m\to\partial B^\eps(h)$. 
We do this by declaring $\iota_{\eps,h}(v_0)=h\cdot w_0$, and $\iota_{+,h}(v_1)=h\cdot w_1$, and $\iota_{-,h}(v_1)=h\cdot w_{-1}$, then extending cyclically. For example, $\iota_{+,1}(v_3)=t_2\cdot w_{-1}$ and $\iota_{-,1}(v_3)=t_{-2}\cdot w_1$, as can be seen in Figure~\ref{fig:basic_blocks}. Note that the intersections of these identifications with $h\cdot \Lambda$ have opposite orientations.

By a \emph{marked block}, we mean a map $\Gamma\to\Lambda^\ext$ of the form $v_i\mapsto\rho^\kappa\iota_{\eps,h}\sigma^\alpha(v_i)$, where $\sigma:C_m\to C_m$ is the shift map and $\rho:\Lambda^\ext\to\Lambda^\ext$ is the rotation map. In other words, it is a subgraph of $\Lambda^\ext$ obtained from $C_m$ by rotating, identifying with a basic block, and adding $\kappa$ to all subscripts. We denote this marked block by $B(\alpha,\eps,\kappa;h)$, to emphasise that we think of it as being a copy of $B(\alpha,\eps,\kappa;1)$ that emanates from $h$. 
\esh

\bsh{Labels in blocks} \label{sh:block_labels}
Let $B=B(\alpha,\eps,\kappa;h)$ be a marked block. From the construction of $B^\eps(h)$ in Item~\ref{sh:blocks}, we see that we can write explicit expressions for labels of vertices in $B$. For instance, as shown in Figure~\ref{fig:labels}, if $p=2$ and $q=1$, then the vertices of $B(\alpha,+,3;h)$ can all be labelled by an element of 
\[
\{\, h,\, ht_5,\, ht_5t_1,\, ht_5t_1t_4t_5\, \}.
\]
Note that these expressions may not give minimal labels, for instance if $h=t_5^{-1}$, or if we are considering a vertex adjacent to, say $ht_5t_1\cdot w_4$.

\begin{figure}[ht]
\centering\includegraphics[height=8.5cm]{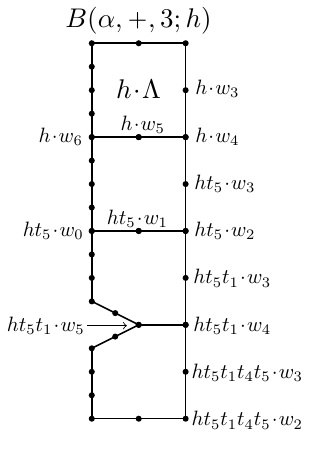}
\caption{The block $B(\alpha,+,3;h)$. The labels are given by Item~\ref{sh:blocks}, but note that $ht_5t_1t_4t_5\cdot w_3=ht_5t_1t_5\cdot w_3$, because $t_4$ commutes with both $t_3$ and $t_5$, so the given label is not the minimal element of $A_\Lambda$ that could be used to label that vertex.} \label{fig:labels}
\end{figure}

We call the set of labels arising in the construction of the blocks $B(0,\eps,\kappa;1)$ \emph{basic labels}. Let us write $\L'(\eps,\kappa)\subseteq A_\Lambda$ for the subset of basic labels coming from $B(0,\eps,\kappa;1)$. It has cardinality $1+p+q$. By construction, the vertices of the block $B(\alpha,\eps,\kappa;h)$ can all be labelled by a word of the form $h\ell$, where $\ell\in\L'(\eps,\kappa)$.

Informally, basic labels tell us how to move from the copy of $\Lambda$ at the ``top'' of a block to other copies of $\Lambda$ with greater depth. In the constructions below, we will want to move between two copies of $\Lambda$ that make up a block, without assuming that either has depth zero. For this we shall use \emph{basic relative labels}.

By a \emph{basic relative label}, we mean an element of the form $\ell^{-1}\ell'$, where $\ell,\ell'\in\L'(\eps,\kappa)$ for some $\eps,\kappa$. We write $\L(\eps,\kappa)$ for the set of such basic relative labels. Note that $\L'(\eps,\kappa)\subset\L(\eps,\kappa)$, because $1\in\L'(\eps,\kappa)$. When we are considering a block $B$ for which the values of $\eps$ and $\kappa$ have not been made explicit, we drop them from the notation and simply write $\L$.
\esh

\bsh{Shortcuts} \label{sh:shortcuts}
If $q>0$, then $\iota_{\eps,h}(\Gamma)=\partial B^\eps(h)$ is not an induced subgraph of $\Lambda^\ext$: there are exactly $q$ shortcuts, given by the final $q$ stages of the construction of $B^\eps(h)$. More precisely, for each $i\in\{p,\dots,p+q-1\}$, there is a pair of vertices $(w,w')\in A_\Lambda\cdot \{(w_1,w_2),(w_{-1},w_{-2})\}$ such that: both $w$ and $w'$ have depth $i$ inside $B^\eps(h)$; we have $\{w,w'\}=B^\eps_i(h)\cap B^\eps_{i+1}(h)$; and the $\iota_{\eps,h}$--preimages of $w$ and $w'$ are not adjacent.

Given a block $B=\rho^\kappa(B^\eps(h))$, let us write $\short(B)$ for the set of vertices $w\in B$ such that $\rho^{-\kappa}(w)$ is a vertex appearing in one of the $q$ pairs as above. Referring again to Figure~\ref{fig:basic_blocks}, let us say that $w\in\short(B)$ is \emph{early} if $\rho^{-\kappa}(w)\in A_\Lambda\cdot \{w_1,w_{-1}\}$, and \emph{late} if $\rho^{-\kappa}(w)\in A_\Lambda\cdot \{w_2,w_{-2}\}$. Intuitively, the early shortcut vertices are the ones on the ``less dense'' side of the block. We refer to two adjacent vertices of $B$ that form a shortcut as a \emph{shortcut pair}, and each vertex is the \emph{shortcut twin} of the other.

Note that, by the construction of the basic blocks, for every marked block $B$, no two vertices of $\short(B)$ are images of adjacent vertices of $\Gamma$. Furthermore, note that no vertex of $h\cdot \Lambda\subseteq B^\eps(h)$ is adjacent to any shortcut of $B^\eps(h)$, because we assume $p >0$.

We refer to a pair of vertices of a block $B$ that are adjacent but do not form a shortcut as being \emph{boundary-adjacent}.
\esh

Our next goal is to describe a map $f:\Gamma^\ext\to\Lambda^\ext$. Bearing in mind that $\Gamma^\ext$ is obtained by making identifications in $\bigsqcup_{g\in A_\Gamma}g\cdot \Gamma$, the construction of $f$ will be done by an iterative procedure. The procedure involves two operations for transporting marked blocks inside $\Lambda^\ext$, which we shall refer to as \emph{doubling} and \emph{gliding}.

\bsh{Doubling} \label{sh:doubling}
The doubling operation is simply an extension to blocks of the standard way to double copies of $\Lambda$ inside $\Lambda^\ext$, as described in \cite{kimkoberda:embedability}: the standard double of $h\cdot \Lambda$ along $h\cdot w_j$ is $ht_j\cdot \Lambda$. Note that $h$ may not be the minimal label of $h\cdot w_j$, and there is an analogous situation for blocks.

Let $B=B(\alpha,\eps,\kappa;h)$ be a marked block, and let $w\in B$. We can write $w=h\ell\cdot w_j$ with $\ell\in\L(\eps,\kappa)$. For a nonzero integer $a$, we define the \emph{$a^\mathrm{th}$ double} of $B$ along $w$ to be
\[
\doub^a_w(B) \,=\, B(\alpha,\eps,\kappa;h\ell t_j^a\ell^{-1}).
\]
This is illustrated in Figure~\ref{fig:double}. Note that the resulting block here does not depend on the choice of $\ell\in\L(\eps,\kappa)$, if there is more than one option.

\begin{figure}[ht]
\includegraphics[width=16cm]{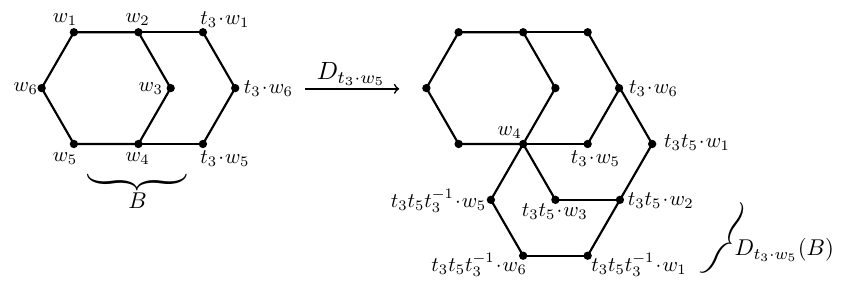}\centering 
\caption{Example of the doubling operation in $C_6^\ext$, illustrated with $B=B(\alpha,+,1;1)$.} \label{fig:double}
\end{figure}
\esh

\begin{lemma} \label{lem:doubling_commutes}
If $w$ and $w'$ are adjacent vertices of a marked block $B$, then $\doub^a_w\doub^{a'}_{w'}(B)=\doub^{a'}_{w'}\doub^a_w(B)$.
\end{lemma}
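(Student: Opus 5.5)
The plan is to reduce the statement to the fact that, in $A_\Lambda$, two conjugates of standard generators commute whenever the corresponding vertices of $\Lambda^\ext$ are adjacent. Write $B=B(\alpha,\eps,\kappa;h)$, $w=h\ell\cdot w_j$ and $w'=h\ell'\cdot w_{j'}$ with $\ell,\ell'\in\L(\eps,\kappa)$.

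First I will fix the meaning of the composite. The operation $\doub^{a'}_{w'}$ is applied to the block $\doub^a_w(B)$, along the vertex occupying the same position as $w'$. Concretely, this is the vertex with the same relative label $\ell'$ over the new origin. This reading is forced by the fact that $\doub$ only changes the origin $h$ and keeps $(\alpha,\eps,\kappa)$, so the two blocks carry the same set $\L(\eps,\kappa)$ of relative labels. With this reading, unwinding the definition in Item~\ref{sh:doubling} twice gives
\[
\doub^{a'}_{w'}\doub^a_w(B)=B(\alpha,\eps,\kappa;\,h\,x\,y),\qquad \doub^{a}_{w}\doub^{a'}_{w'}(B)=B(\alpha,\eps,\kappa;\,h\,y\,x),
\]
where $x=\ell t_j^a\ell^{-1}$ and $y=\ell' t_{j'}^{a'}\ell'^{-1}$. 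Since the remark after the definition says the result does not depend on the choice of $\ell$ or $\ell'$, it suffices to show that $x$ and $y$ commute in $A_\Lambda$.

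Next I will translate adjacency into commutation. By Definition~\ref{def:extension_graph}, every edge of $\Lambda^\ext$ lies in some copy $g\cdot\Lambda$. So there are $g\in A_\Lambda$ and adjacent vertices $w_j,w_{j'}$ of $\Lambda$ with $w=g\cdot w_j$ and $w'=g\cdot w_{j'}$. The types match, because identified vertices have the same index. The identification rule then gives $t_j^{h\ell}=t_j^{g}$ and $t_{j'}^{h\ell'}=t_{j'}^{g}$. Since $t_j$ and $t_{j'}$ commute, conjugating by $g$ shows that $t_j^{h\ell}$ and $t_{j'}^{h\ell'}$ commute. Conjugating by $h^{-1}$ then shows that $t_j^{\ell}$ and $t_{j'}^{\ell'}$ commute. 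Taking powers, $x=(t_j^{\ell})^a$ and $y=(t_{j'}^{\ell'})^{a'}$ commute, so $hxy=hyx$ and the two blocks coincide.

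I expect the main obstacle to be the bookkeeping in the first step, not the algebra. One has to justify that "$w'$" in the second doubling refers to the transported vertex, and that the relative labels of the transported $w$ and $w'$ in the new block are again $\ell$ and $\ell'$. This holds because the marked block is determined by $(\alpha,\eps,\kappa)$ together with its origin. The rotation $\rho^\kappa$ needs no separate treatment, since it is already built into the label set $\L(\eps,\kappa)$.
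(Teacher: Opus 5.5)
Your proof is correct and follows the paper's argument. The paper likewise places the edge $ww'$ in a common copy $g\cdot\Lambda$ to get commuting generators $t_j,t_{j'}$, and leaves as an ``easy computation'' exactly what you carry out: both composites equal $B(\alpha,\eps,\kappa;\,hxy)=B(\alpha,\eps,\kappa;\,hyx)$, where $x=(t_j^{\ell})^a$ and $y=(t_{j'}^{\ell'})^{a'}$ commute. The same commutation also gives $hx\ell'\cdot w_{j'}=w'$, so the ``transported'' vertex in your first step is literally $w'$, and the bookkeeping concern you flag disappears.
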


\begin{proof}
Since $(w,w')$ is an edge of $\Lambda^\ext$, there exists $h\in A_\Lambda$ such that $w=h\cdot w_j$ and $w'=h\cdot w_k$, where $(w_j,w_k)$ is an edge of $\Lambda$, and hence $t_j$ and $t_k$ commute. The lemma follows from an easy computation.
\end{proof}

Unlike the doubling operation, we do not define the gliding operation for all vertices in all blocks. Moreover, it does not behave like a morphism, and we instead define explicitly what should be thought of as its powers. The role of gliding will be to act as a ``fake doubling'' operation in the construction of the map $f$ in situations where we cannot use the actual doubling operation.

\bsh{Gliding} \label{sh:gliding}
Let $B=B(\alpha,\eps,\kappa;h)\subseteq\Lambda^\ext$ be a marked block, and let $w\in B$. Let $\ell\in\L(\eps,\kappa)$ be minimal such that $w=h\ell\cdot w_j$ for some $j$. By the definition of the marked block $B$, there is some $i$ such that $w=\rho^\kappa\iota_{\eps,h}\sigma^\alpha(v_i)$. Let $\delta=\dep(w;B)$. We shall only define glides along $w$ in the case where $w\in\short(B)$, and we divide into two cases according to whether $w$ is an early or late shortcut of $B$. See Figures~\ref{fig:early_glide} and~\ref{fig:late_glide}, respectively. Recall from Item~\ref{sh:shortcuts} that the early shortcuts correspond to the vertices ``on the side of $B$ with fewer vertices'', which come earlier in the basic marked blocks. Regardless of whether $w$ is early or late, we necessarily have $\ell\ne1$, because $p>0$.

For a nonzero integer $a$, let $\hat a=a+1$ if $a>0$, and let $\hat a=a$ if $a<0$. If $w=\rho^\kappa\iota_{\eps,h}\sigma^\alpha(v_i)$ is an early shortcut of $B$, then define the \emph{$a^\mathrm{th}$ glide} of $B$ along $w$ to be
\[
\glide^a_{h\ell\cdot w_j}(B(\alpha,\eps,\kappa;h)) \,=\, B(1-i,\eps(-1)^\delta,\kappa;h\ell t_j^{\hat a}).
\]
We call this an \emph{early glide}. See Figure~\ref{fig:early_glide}. On the other hand, if $h\ell\cdot w_j$ is a late shortcut of $B$, then define
\[
\glide^a_{h\ell\cdot w_j}(B(\alpha,\eps,\kappa;h)) \,=\, B(3-n-i,\eps(-1)^\delta,\kappa-\eps(-1)^\delta;h\ell t_j^{\hat a}),
\]
where we recall that $\Lambda=C_n$. We call this a \emph{late glide}. See Figure~\ref{fig:late_glide}.

The idea of the glide operation is to overlay a block $B'$ on top of the shortcut vertex $w\in B$ that we are gliding along, in such a way that the intersection of $B'$ with $B$ is precisely the star of $w$. There are many options for how to overlay a block in that way, but we fix a systematic way: $w$ will lie in both $\partial B'$ and the intersection of the first two copies of $\Lambda$ comprising $B'$, with the vertex $w'$ that it forms a shortcut with lying in $B'\ssm\partial B'$. Moreover, if $w$ is an early shortcut then we match it with the early side of $B'$, and if it is late then we match it with the late side of $B'$. This leads to the above formulas as follows. 

Referring to Figure~\ref{fig:basic_blocks}, if we read the vertices of $B^+$ clockwise and consider the intersections of the copies of $\Lambda$ comprising $B^+$, then note that they alternately count up or down, whereas in $B^-$ they alternately count down or up. Thus the sign of $B'$ depends on the sign of $B$ and on the depth of $w$ inside $B$. The amount we need to rotate by is given by comparing the type of the vertex $w$ with either $\eps$ or $\eps(n-3)$, according to the sign of $B'$ and whether $w$ is early or late. Replacing $a$ by $\hat a$ ensures that $B$ and $B'$ intersect in no more than the star of $w$. Finally, we wish the marked blocks $B$ and $B'$ to restrict to the same marking on $w$ and its two neighbours in the loop $\partial B$, which we accomplish with an appropriate shift of $\Gamma$.
\esh


\begin{figure}[ht]
\centering\includegraphics[height=12cm, trim= 0 5mm 0 5mm]{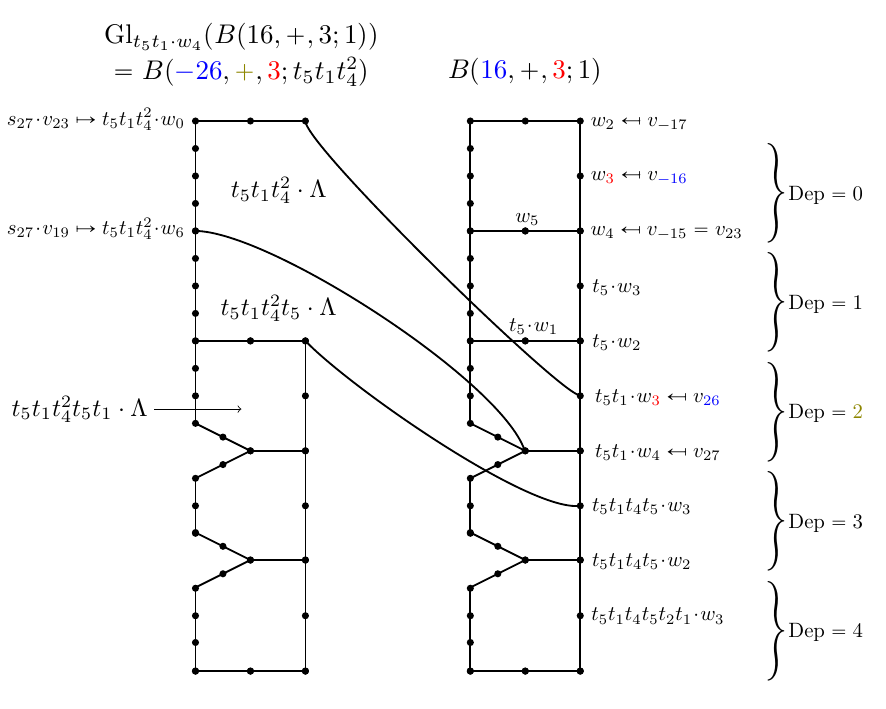}
\caption{An early glide, illustrated with $m=38$, $n=10$. The glide is along the vertex $t_5t_1\cdot w_4$, which has depth two. The origin of the marking of the new block is highlighted.} \label{fig:early_glide}
\end{figure}

\begin{figure}[ht]
\centering\includegraphics[height=12cm, trim= 0 5mm 0 5mm]{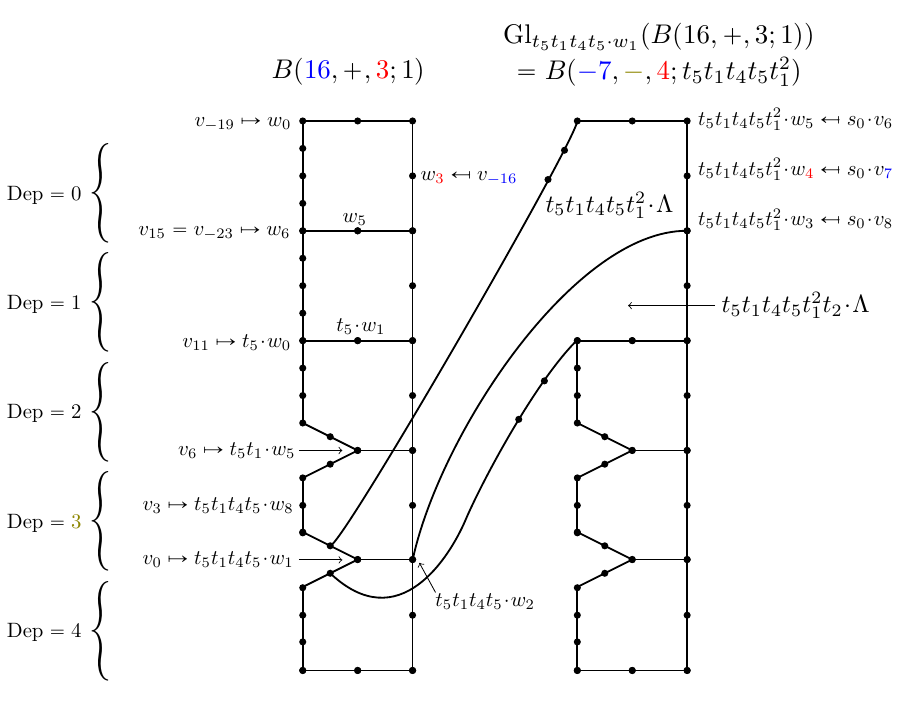}
\caption{A late glide, illustrated with $m=38$, $n=10$. The glide is along the vertex $t_5t_1t_4t_5\cdot w_1$, which has depth three. The origin of the marking of the new block is highlighted.} \label{fig:late_glide}
\end{figure}

Having defined the doubling and gliding operations, we make note of a few of their basic properties.

\begin{lemma} \label{lem:stay_shortcut}
If $w$ and $w'$ are vertices of a marked block $B$, then $w'\in\short(B)$ if and only if $\doub^a_w(w')\in\short(\doub^a_w(B))$, and $w'$ is an early shortcut if and only if $\doub^a_w(w')$ is an early shortcut. If $\glide^a_w(B)$ is defined, then $w\not\in\short(\glide^a_w(B))$.
\end{lemma}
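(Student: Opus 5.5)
The plan is to view doubling as a left translation of $\Lambda^\ext$ and observe that left translations commute with everything used to define shortcuts. By definition, with $w=h\ell\cdot w_j$, we have
\[
\doub^a_w(B(\alpha,\eps,\kappa;h)) = B(\alpha,\eps,\kappa;g h),\qquad g = h\ell t_j^a\ell^{-1}h^{-1}.
\]
The vertex map $\doub^a_w$ on $B$ should be read as the map $x\mapsto g\cdot x$; equivalently, it matches the marked blocks via the same $v_i\in\Gamma$. The first step is to check that these two descriptions agree, which amounts to checking that $\rho^\kappa\iota_{\eps,gh}\sigma^\alpha = g\cdot\rho^\kappa\iota_{\eps,h}\sigma^\alpha$. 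This holds because $\rho$ is defined by shifting subscripts in a word. Consequently $\rho^\kappa(gh\,x)=\rho^\kappa(gh)\cdot\rho^\kappa(x)$ in the evident sense, and a block $\rho^\kappa(B^\eps(gh))$ is the translate of $\rho^\kappa(B^\eps(h))$ by the element $g$. Here $g$ has the form $\rho^\kappa(\cdot)$ applied to an element, and rotation is an automorphism-equivariant relabelling. I would state this equivariance once: $\rho^\kappa(x\cdot y)=\rho^\kappa_{\mathrm{grp}}(x)\cdot\rho^\kappa(y)$, where $\rho_{\mathrm{grp}}$ is the automorphism $t_k\mapsto t_{k+1}$.

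Given equivariance, the first two claims are formal. The set $\short(B)$ and the early/late distinction are defined by the position of $\rho^{-\kappa}(w')$ inside $B^\eps(h)$. They are phrased via depth and the stage-wise intersections $B^\eps_i\cap B^\eps_{i+1}$, and are invariant under replacing $h$ by $h'$. This is because $B^\eps(h')=h'h^{-1}B^\eps(h)$ stage by stage, with $B^\eps_i(h')=h'h^{-1}B^\eps_i(h)$ directly from Item~\ref{sh:blocks}. Translation preserves depths, stage intersections, the marking, and types up to the same $A_\Lambda$-orbit. So $w'$ is a shortcut (respectively early) in $B$ if and only if its image is one in the doubled block. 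The converse direction follows by applying $\doub^{-a}$, or simply because translation is a bijection.

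For the gliding claim, let $B'=\glide^a_w(B)=B(\alpha',\eps',\kappa';h\ell t_j^{\hat a})$. We need to see that $w=h\ell\cdot w_j$ does not lie in $\short(B')$. Translating by $(h\ell t_j^{\hat a})^{-1}$, and noting that $t_j^{\hat a}$ fixes $w$ as a vertex, this reduces to checking that $\ell$-free data, namely the vertex $1\cdot w_j$, is not a shortcut vertex of $\rho^{\kappa'}(B^{\eps'}(1))$. Here $1\cdot w_j$ lies in the depth-zero copy $1\cdot\Lambda$ of that block. By Item~\ref{sh:shortcuts}, since $p>0$, no vertex of the depth-zero copy is even adjacent to a shortcut, and in particular none is one. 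This holds regardless of the specific $\kappa',\eps'$.

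The main obstacle is the bookkeeping in the equivariance step. One must confirm that $\rho$, defined only on labelled vertices, really intertwines with left translation via the index-shift automorphism. One must also confirm that the recorded marking of the doubled block agrees with the translate of the marking of $B$, so that “$\doub^a_w(w')$” is unambiguous and independent of the choice of $\ell$. I would handle this by a direct computation on words, analogous to the proof of Lemma~\ref{lem:doubling_commutes}.
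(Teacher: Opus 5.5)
Your proposal is correct and is the paper's argument. The doubled block differs from $B$ only in its last parameter, so the two are related by a left translation of $\Lambda^\ext$, which preserves depths, shortcut pairs and early/late types. The glided block $\glide^a_w(B)$ has $w=h\ell t_j^{\hat a}\cdot w_j$ in its depth-zero copy of $\Lambda$, which contains no shortcuts because $p>0$.

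The equivariance bookkeeping you flag is lighter than you expect. The paper intends $B(\alpha,\eps,\kappa;h)$ to be the translate $h\cdot B(\alpha,\eps,\kappa;1)$. Under that reading, $\doub^a_w(B)$ is literally the translate of $B$ by $h\ell t_j^a\ell^{-1}h^{-1}$, and no $\rho$-equivariance computation is needed.
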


\begin{proof}
The statements about doubling hold because the marked blocks $B$ and $\doub^a_w(B)$ differ only by a translation in $\Lambda^\ext$. The statement about gliding follows from the observation in Item~\ref{sh:shortcuts} that no vertex of $h\cdot \Lambda\subseteq B^\eps(h)$ is a shortcut.
\end{proof}

The following gives a commutation relation between certain doubles and glides, similar to Lemma~\ref{lem:doubling_commutes}.

\begin{lemma} \label{lem:gliding_commutes}
Let $B=B(\alpha,\eps,\kappa;h)$ be a marked block. Let $w,w'$ be boundary-adjacent vertices of $B$, and suppose that $w'\in\short B$. For any $a,b\ne0$, the marked blocks $A_1=\glide^b_{\doub^a_w(w')}\doub^a_w(B)$ and $A_2=\doub^a_{\glide^b_{w'}(w)}\glide^b_{w'}(B)$ are equal.
\end{lemma}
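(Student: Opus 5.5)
The plan is to compute both marked blocks explicitly from the formulas in Items~\ref{sh:doubling} and~\ref{sh:gliding} and check that all four parameters $(\alpha,\eps,\kappa;\,\cdot\,)$ agree. Since $w,w'$ are adjacent in $\Lambda^\ext$, we can write $w=h\ell\cdot w_k$ and $w'=h\ell\cdot w_j$ with a common $\ell\in\L(\eps,\kappa)$, where $w_j,w_k$ are adjacent in $\Lambda$, so $t_j$ and $t_k$ commute. We choose $\ell$ minimal for $w'$, as the definition of gliding requires, and we must check that the same $\ell$ also labels $w$. This holds because $w$ and $w'$ are boundary-adjacent, hence lie in a common copy $h\ell\cdot\Lambda$ of the block. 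If that copy is not the one with the minimal label of $w'$, we instead use the fact noted in Item~\ref{sh:doubling} that the double does not depend on the choice of $\ell$.

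\textbf{Computing $A_1$.} By definition $\doub^a_w(B)=B(\alpha,\eps,\kappa;h\ell t_k^a\ell^{-1})$. This is a translate of $B$ by $g=h\ell t_k^a\ell^{-1}h^{-1}$, so $\doub^a_w(w')=g\cdot w'=h\ell t_k^a\cdot w_j=h\ell\cdot w_j=w'$, using that $t_k$ commutes with $t_j$. By Lemma~\ref{lem:stay_shortcut}, this vertex is again a shortcut of the same kind (early or late). Its depth is unchanged, and its marking index $i$ is unchanged, since translation preserves the marking. Its minimal relative label in the new block is still $\ell$, with the new origin $h\ell t_k^a\ell^{-1}$. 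Applying the glide formula then gives
\[
A_1=B(\alpha',\eps',\kappa';\,h\ell t_k^a\ell^{-1}\cdot\ell t_j^{\hat b})=B(\alpha',\eps',\kappa';\,h\ell t_k^a t_j^{\hat b}),
\]
where $(\alpha',\eps',\kappa')$ are the same parameters as for $\glide^b_{w'}(B)$. These are $(1-i,\eps(-1)^\delta,\kappa)$ in the early case and $(3-n-i,\eps(-1)^\delta,\kappa-\eps(-1)^\delta)$ in the late case.

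\textbf{Computing $A_2$.} Here $\glide^b_{w'}(B)=B(\alpha',\eps',\kappa';h\ell t_j^{\hat b})$. Write $B'$ for this block. The gliding construction is designed so that $B'\cap B$ is exactly the star of $w'$. In particular $w\in\star(w')$ is fixed, that is, $\glide^b_{w'}(w)=w$, understood as the image of the same marked vertex of $\Gamma$: the shift $\alpha'$ was chosen precisely so that the markings agree on $w'$ and its two boundary neighbours. Now $w=h\ell t_j^{\hat b}\cdot w_k$, and $w$ must lie in the first copy $h\ell t_j^{\hat b}\cdot\Lambda$ of $B'$, so its relative label in $B'$ is $1$. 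Doubling therefore gives
\[
A_2=B(\alpha',\eps',\kappa';\,h\ell t_j^{\hat b}t_k^a)=A_1,
\]
since $t_j$ and $t_k$ commute.

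\textbf{Main obstacle.} The hard step is justifying, in the $A_2$ computation, that $w$ appears in $B'$ at depth~$0$, so that its label there is exactly the origin $h\ell t_j^{\hat b}$, and that it carries the same marking. This requires unwinding the gliding formulas in both the early and late cases. We must check that the two boundary neighbours of $w'$ in $\partial B$ land in the top copy $\rho^{\kappa'}(B^{\eps'}_0)$ of $B'$, using the sign/rotation bookkeeping described after the definition of gliding. Boundary-adjacency is essential here, since the shortcut twin of $w'$ lies in the interior of $B'$ and not on $\partial B'$. I would verify this on the basic block $B^+(1)$ with $\kappa=0$, in each of the two cases, and then transport it by rotation and translation.
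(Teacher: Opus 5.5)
Your reduction to comparing final parameters is the same as the paper's, and your computation is correct when $w$ lies in the copy of $\Lambda$ carrying the minimal label $\ell_0$ of $w'$. There is, however, a second configuration, and your argument fails there.

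\textbf{The missing configuration.} Write $w'=h\ell_0\cdot w_j$, and let $t_{j_0}$ be the type of the shortcut twin of $w'$. The shortcut pair is the intersection of the consecutive copies $h\ell_0\cdot\Lambda$ and $h\ell_0t_{j_0}t_j\cdot\Lambda$. The vertex $w'$ has one boundary neighbour in each: $h\ell_0\cdot w_k$ and $h\ell_0t_{j_0}\cdot w_k$. Both have type $t_k$, but they are distinct, because $t_{j_0}$ and $t_k$ do not commute.

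\textbf{Why the single-label computation fails.} Suppose $w$ is the second neighbour. Then $\ell_0$ is not a label of $w$ at all. The independence of the double from the choice of label concerns different labels of the \emph{same} vertex, so it does not rescue a single-$\ell$ computation. You must double using the label $\ell_0t_{j_0}t_j$ of $w$, but glide using the minimal label $\ell_0$ of $w'$. The glide genuinely depends on this choice: gliding with $\ell_0t_{j_0}t_j$ would right-multiply the final parameter by $t_{j_0}t_j$. With $\ell=\ell_0$, your formulas for $A_1$ and $A_2$ are really the computation for the other neighbour $h\ell_0\cdot w_k$, not for $w$. With $\ell=\ell_0t_{j_0}t_j$, the formula for $A_1$ is also wrong.

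\textbf{Your ``main obstacle'' is false here.} Let $B'=\glide^b_{w'}(B)$, with top copy $B'_0=h\ell_0t_j^{\hat b}\cdot\Lambda$. The twin $h\ell_0\cdot w_{j_0}=h\ell_0t_j^{\hat b}\cdot w_{j_0}$ lies in $B'_0$. So $w'$ has only one other neighbour in $B'_0$, and its two boundary neighbours cannot both land in the top copy. Indeed $h\ell_0\cdot w_k\in B'_0$. By contrast, $h\ell_0t_{j_0}\cdot w_k=h\ell_0t_j^{\hat b}t_{j_0}\cdot w_k$ lies in the next copy $h\ell_0t_j^{\hat b}t_{j_0}\cdot\Lambda$, not in $B'_0$, with relative label $t_{j_0}$ rather than $1$.

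\textbf{The fix.} Make the paper's case split, according to whether the minimal label of $w$ is $\ell_0$ or $\ell_0t_{j_0}t_j$. In the second case, the final parameters are
\[
h_1=h\ell_0t_{j_0}t_j\,t_k^a\,t_j^{-1}t_{j_0}^{-1}\,t_j^{\hat b},\qquad h_2=h\ell_0t_j^{\hat b}\,t_{j_0}t_k^at_{j_0}^{-1}.
\]
Both equal $h\ell_0t_{j_0}t_j^{\hat b}t_k^at_{j_0}^{-1}$, because $t_j$ commutes with both $t_{j_0}$ and $t_k$. With this second computation added, the rest of your argument goes through. That includes the unchanged parameters $(\alpha',\eps',\kappa')$, the identity $\doub^a_w(w')=w'$, and the preservation of shortcut type, depth and marking under doubling.
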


\begin{proof}
Since $w$ and $w'$ are adjacent, we have $\doub^a_w(w')=w'$ and $\glide_{w'}^b(w)=w$. By Lemma~\ref{lem:stay_shortcut}, we have that $w'$ is a (late) shortcut in $B$ if and only if it is a (late) shortcut in $\doub^a_w(B)$. Moreover, the depth of $w'$ inside $\doub^a_w(B)$ is equal to its depth inside $B$. Since doubling affects only the final parameter marked blocks, it follows from the expressions in Item~\ref{sh:gliding} that, in order to prove the lemma, we only need check that the final parameters of the blocks $A_1$ and $A_2$ agree. Let us refer to these parameters as $h_1$ and $h_2$, respectively.

Let $h\ell_0\cdot w_{j_0}$ be the shortcut of $B$ that is adjacent to $w'$ in $\Lambda^\ext$, with $\ell_0$ the minimal element of $\L(\eps,\kappa)$ making this true. Note that $w'=h\ell_0\cdot w_{j_1}$ for some $j_1$, and $\ell_0$ is also minimal for $w'$. Additionally, if $\ell\in\L(\eps,\kappa)$ is minimal such that $w=h\ell\cdot w_{j_2}$, then $\ell\in\{\ell_0,\ell_0t_{j_0}t_{j_1}\}$.

First consider the block $A_1$. From Item~\ref{sh:doubling}, we compute that $A_1=\glide^b_{w'}(B(\alpha,\eps,\kappa;h\ell t_{j_2}^a\ell^{-1}))$. Observe that if $\ell'\in\L(\eps,\kappa)$ is minimal such that $w'=h\ell t_{j_2}^a\ell^{-1}\ell'\cdot w_{j_1}$, then $\ell'=\ell_0$. From Item~\ref{sh:gliding}, we compute that 
\[
h_1 \,=\, h\ell t_{j_2}^a\ell^{-1}\ell_0t_{j_1}^{\hat b}.
\]
Now consider the block $A_2$. From Item~\ref{sh:gliding}, the final parameter of $\glide^b_{w'}(B)$ is equal to $h\ell_0t_{j_1}^{\hat b}$. This time, if $\ell'$ is minimal such that $w=h\ell_0t_{j_1}^{\hat b}\ell'\cdot w_{j_2}$, then $\ell'=\ell_0^{-1}\ell$. From Item~\ref{sh:doubling}, we compute
\[
h_2 \,=\, h\ell_0t_{j_1}^{\hat b}\ell't_{j_2}^a{\ell'}^{-1}.
\]

To complete the proof, we split into cases according to the value of $\ell$ and then compare the above expressions for $h_1$ and $h_2$. In the first case, $\ell=\ell_0$, and hence $\ell'=1$ in the expression for $h_2$. Since $t_{j_1}$ and $t_{j_2}$ commute, it is easy to see that $h_1=h_2$ in this case. The second case is that $\ell=\ell_0t_{j_0}t_{j_1}$, and hence $\ell'=t_{j_0}t_{j_1}$. This time a computation shows that
\[
h_1 \,=\, h_2 \,=\, h\ell_0t_{j_0}t_{j_1}^{\hat b}t_{j_2}^at_{j_0}^{-1}. \qedhere
\]
\end{proof}

We shall use the doubling and gliding operations to define a map $f:\Gamma^\ext\to\Lambda^\ext$, and the desired quasiisometric embedding $A_\Gamma\to A_\Lambda$ will arise in the process.

Thinking of $\Gamma^\ext$ as being obtained from $\bigsqcup_{g\in A_\Gamma}g\cdot \Gamma$, we aim to define $f$ inductively on the syllable length $\syl g$ of $g$. Recall that $\syl g$ is the minimal $k$ such that we can write $g=s_{i_1}^{n_1}\dots s_{i_k}^{n_k}$. 

\bsh{Reconstructing extension graphs} \label{sh:f_construction}
First define $f|_\Gamma=\iota_{+,1}$, so that $f$ identifies $\Gamma$ with (the boundary path) of the positive basic block $B^+(1)\subseteq\Lambda^\ext$. In other words, $f(\Gamma)=B(0,+,0;1)$. For readability, here and below we do not distinguish between a block and its boundary.

We now proceed inductively. Suppose that we have defined $f$ on all $g\cdot \Gamma$ with $\syl g\le d$, in such a way that $f|_{g\cdot \Gamma}$ is a marked block. Given $g\in A_\Gamma$ with $\syl g=d+1$, write $g=g's_i^a$, where $\syl{g'}=d$. The definition of $f|_{g\cdot \Gamma}$ depends on whether $f(g'\cdot v_i)$ is a shortcut of the marked block $B=f(g'\cdot \Gamma)$.
\[
\text{If } f(g'\cdot v_i)\not\in\short(B), \text{ then } f|_{g\cdot \Gamma} \,=\, \doub^a_{f(g'\cdot v_i)}(B).
\]
\[
\text{If } f(g'\cdot v_i)\in\short(B), \text{ then } f|_{g\cdot \Gamma} \,=\, \glide^a_{f(g'\cdot v_i)}(B).
\]
\esh

\emph{A priori}, the definition of $f|_{g\cdot \Gamma}$ depends on the choice of word representing $g$. Moreover, if $g\cdot v=gg'\cdot v$ represent the same vertex of $\Gamma^\ext$, then it is not immediately clear that $f|_{g\cdot \Gamma}(g\cdot v)=f|_{gg'\cdot \Gamma}(g\cdot v)$. The following lemma shows that these choices do not matter.

\begin{lemma} \label{lem:f_well_defined}
The map $f:\Gamma^\ext\to\Lambda^\ext$ is well defined.
\end{lemma}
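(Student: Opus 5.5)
We split the claim into two parts: (i) the marked block $f|_{g\cdot\Gamma}$ does not depend on the syllable-reduced word chosen for $g$; (ii) if $g\cdot v=g'\cdot v$ in $\Gamma^\ext$, then $f|_{g\cdot\Gamma}(g\cdot v)=f|_{g'\cdot\Gamma}(g'\cdot v)$.

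\textbf{Part (i).} Syllable-reduced representatives of the same element are related by swapping adjacent syllables $s_i^as_j^b$ with $v_i,v_j$ adjacent in $\Gamma$. This is the syllable analogue of Proposition~\ref{prop:shuffling}; merging of syllables cannot occur, since the words are already syllable-reduced. So we argue by induction on $\syl g$. If two reduced words for $g$ end in the same syllable $s_i^a$, their prefixes represent the same $g'$, and the induction hypothesis gives the same block $f(g'\cdot\Gamma)$ and the same vertex $f(g'\cdot v_i)$. Otherwise, by the swapping description, after changing prefixes via the induction hypothesis we may assume the words end in $s_i^as_j^b$ and $s_j^bs_i^a$ respectively, with $v_i\sim v_j$. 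It therefore suffices to show that the two operations along the adjacent vertices $w=f(g''\cdot v_i)$ and $w'=f(g''\cdot v_j)$ of $B=f(g''\cdot\Gamma)$ commute. Here we also use that after one operation the image of the other vertex is the same vertex of $\Lambda^\ext$, with the same marking.

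This case split is where the shortcut structure enters. The vertices $w,w'$ are images of adjacent vertices of $\Gamma$, so they are boundary-adjacent. By Item~\ref{sh:shortcuts}, at most one of them lies in $\short(B)$. Lemma~\ref{lem:stay_shortcut} shows that shortcut status is preserved by doubling, and that a glided-along vertex is no longer a shortcut. Hence the rule in Item~\ref{sh:f_construction} selects the same operation type along each vertex in both orders. If neither vertex is a shortcut, Lemma~\ref{lem:doubling_commutes} applies. If exactly one is, Lemma~\ref{lem:gliding_commutes} applies.

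One point remains to check: after gliding along $w'$, the vertex $w$ must not have become a shortcut of the new block. This should follow from the choice of marking in Item~\ref{sh:gliding}. There, $w'$ and its two boundary neighbours lie in the top copy $h'\cdot\Lambda$ of the new block, and no vertex of that copy is a shortcut. We also need that the marking of $w$ is unchanged, which holds because the shift parameter was chosen to match the markings on the star of $w'$.

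\textbf{Part (ii).} By the remark after Definition~\ref{def:extension_graph}, if $g\cdot v=g'\cdot v$, then $g'=gu$ where $u$ is a product of generators commuting with $s_v$, i.e. of $s_v$ and its two neighbours. Inducting on the syllable length of $u$, it suffices to treat a single syllable $u=s_k^c$ with $v_k\in\star(v)$. We must check that $f(g\cdot v)$ is fixed by the operation along $f(g\cdot v_k)$ and keeps the same marking. Doubling along a vertex fixes its star pointwise, being translation by a conjugate of $t_j^a$. Gliding fixes the star by design, since the new block meets $B$ exactly in the star and the markings agree there.

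\textbf{Expected obstacle.} The main difficulty is the bookkeeping in the commutation step, namely the subcase where the double is along a vertex whose minimal label differs from that of the shortcut. Lemma~\ref{lem:gliding_commutes} already handles this, but we must confirm that its hypotheses match the situation arising in the induction, including the verification of matching markings, not just matching images.
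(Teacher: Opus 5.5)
Your proposal follows the paper's proof. It makes the same two checks: an operation along $f(g\cdot v_k)$ fixes the images of its star, and the operations along two boundary-adjacent vertices commute by Lemmas~\ref{lem:doubling_commutes} and~\ref{lem:gliding_commutes}, since at most one of the two vertices is a shortcut.

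You go slightly beyond the paper in one place. You note that after gliding along the shortcut $w'$, the neighbour $w$ must still be a non-shortcut, so that the construction doubles along it in that order. That is correct and needed, but your justification is wrong.

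In $B'=\glide^b_{w'}(B)$ the vertex $w'$ lies in the intersection of the first two copies of $\Lambda$. So only one of its boundary neighbours lies in the top copy $h'\cdot\Lambda$. The other lies in the second copy, at depth one. Unrotating, with positive sign, this vertex is $h't_2\cdot w_0$ for an early glide and $h't_2\cdot w_4$ for a late one.

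The right argument is that shortcuts have depth at least $p$, which settles the case $p\ge2$. For $p=1$, the depth-one shortcut pair is $\{h't_2\cdot w_{-1},\,h't_2\cdot w_{-2}\}$. This pair avoids those two vertices because $n>6$; for $n=6$, the late case would land exactly on $h't_2\cdot w_{-2}$. This is one place where the hypothesis $n>6$ is genuinely used.
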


\begin{proof}
First suppose that $g\cdot v_i=gs_{i'}\cdot v_i$, where $|gs_{i'}|>|g|$. For this to happen, it must be the case that $v_{i'}$ is equal to or adjacent to $v_i$ in $\Gamma$. Since the doubling and gliding operations both fix the $f$--images of the star of a vertex, we get that $f|_{g\cdot \Gamma}(g\cdot v_i)=f|_{gs_{i'}\cdot \Gamma}(g\cdot v_i)$, as desired.

Similarly, suppose that $g=g_1s_{i_1}^{a_1}=g_2s_{i_2}^{a_2}$, where both expressions are syllable-reduced words. For this to be the case, $s_{i_1}$ and $s_{i_2}$ must commute, and we can write $g=g's_{i_1}^{a_1}s_{i_2}^{a_2}$. The result therefore follows from Lemmas~\ref{lem:doubling_commutes} and~\ref{lem:gliding_commutes}, because $f(g'\cdot v_{i_1})$ and $f(g'\cdot v_{i_2})$ are boundary-adjacent vertices of the block $f(g'\cdot \Gamma)$ and so cannot both be shortcuts.
\end{proof}

In view of Lemma~\ref{lem:f_well_defined}, there is no ambiguity in writing $f(\gamma\cdot v)$ for a vertex $v\in\Gamma$ as $\gamma$ varies among syllable-reduced representatives of a fixed element of $A_\Gamma$, so we shall sometimes do this. 

\bsh{Notational update}
Now that we have defined the map $f$ using doubling and gliding, we shall not need to make further direct reference to the cyclic ordering on the vertices of $C_m$ and $C_n$ that we established above. To reduce the number of subscripts needed in what follows, we therefore drop this identification. From now on, $v$ will denote an arbitrary vertex of $C_m=\Gamma$, with corresponding generator $s\in S$, and $w_1,w_2$ will denote an arbitrary pair of (possibly equal) vertices of $C_n=\Lambda$, corresponding to $t_1,t_2\in T$.
\esh

From the map $f$ we can identify a ``route'' map, that keeps track of how the labels of the vertices of the blocks $f(g\cdot \Gamma)$ evolve as we increase the syllable length of $g$. 

\bsh{The lifted map $F$} \label{sh:roots}
We define a map $F$ of syllable-reduced words. We do this recursively on syllable length in such a way that $F(\gamma)$ represents an element of $A_\Lambda$ satisfying the property that $F(\gamma)\cdot \Lambda$ is one of the copies of $\Lambda$ appearing in the block $f(\gamma\cdot \Gamma)$. Intuitively speaking, $F(\gamma)$ is the label of the copy of $\Lambda$ in $f(\gamma\cdot \Gamma)$ that is closest to the base copy $1\cdot\Lambda\subseteq\Lambda^\ext$.

Set $F(1)=1$, and indeed $1\cdot \Lambda$ appears in $f(1\cdot \Gamma)$. Let $\gamma=s_1^{a_1}\dots s_d^{a_d}$ be a syllable-reduced word in $A_\Gamma$.
Assume we have defined $F(\gamma')$ for $\gamma'=s_1^{a_1}\dots s_{d-1}^{a_{d-1}}$, such that $F(\gamma')\cdot \Lambda$ is one of the copies of $\Lambda$ appearing in the block $f(\gamma'\cdot \Gamma)$. 
That means that there exists a minimal basic relative label $\ell\in \cal{L}$ and some $w\in\Lambda$ such that $f(\gamma'\cdot v_d)=F(\gamma')\ell\cdot w$. 


Let $t$ be the standard generator of $A_\Lambda$ corresponding to $w$. If $f(\gamma'\cdot v_d)$ is not a shortcut of $B$, then we set
\[
F(\gamma) \,=\, F(\gamma')\ell t^{a_d}.
\]
If $f(\gamma'\cdot v_d)\in\short(B)$, then $v_d$ is a boundary vertex of $B$, so the expression for $F(\gamma)$ depends on the position of $F(\gamma')\cdot \Lambda$ inside $B$. If no vertex of $F(\gamma')\cdot \Lambda$ has greater depth inside $B$ than $f(\gamma'\cdot v_d)$, then we define
\[
F(\gamma) \,=\, F(\gamma')\ell t^{\hat a_d},
\]
where $\hat a_d$ is as defined in Item~\ref{sh:gliding}. Otherwise, $f(\gamma'\cdot v_d)\in\short(B)$ and there are vertices of $F(\gamma')\cdot \Lambda$ of greater depth inside $B$ than $f(\gamma'\cdot v_d)$, and in this case we declare
\[
F(\gamma) \,=\, F(\gamma')\ell t^{\hat a_d-1}.
\]
Observe that the definitions of doubling and gliding imply that $F(\gamma)\cdot \Lambda$ is one of the copies of $\Lambda$ appearing in $f(\gamma\cdot \Gamma)$.

In summary, if we denote by $\gamma_c$ the subword of $\gamma$ consisting of the first $c$ syllables, $s_1^{a_1}\dots s_c^{a_c}$, then $F(\gamma_c)$ is a subword of $F(\gamma)$. We have that $F(\gamma)=\ell_1t_1^{b_1}\dots \ell_dt_d^{b_d}$ where $\ell_c$ is minimal such that $f(\gamma_{c-1}\cdot v_c)= F(\gamma_{c-1})\ell_c\cdot w$ for some $w\in \Lambda$ and $b_c\in \set{a_c,\hat{a}_c,\hat{a}_c-1}\subseteq\set{a_c-1,a_c,a_c+1}$.
For each $c\leq d$, if $\ell_c\neq 1$, then because it is minimal it must have a final letter that does not commute with $t_c$ and in particular $\ell_c$ does not commute with $t_c$.
\esh

The map $F$ can be thought of as a lift of $f$, in the sense that, at least on the level of vertex sets, it induces $f$. Indeed, if $P=\{gs^k\,:\,k\in\Z\}$ is a standard geodesic of $A_\Gamma$ corresponding to a vertex $g\cdot v\in\Gamma^\ext$, then $F$ maps $P$ to a standard geodesic that corresponds to $f(g\cdot v)\in\Lambda^\ext$. This will not be needed in our arguments.

\begin{lemma} \label{lem:F_well_defined}
The map $F$ induces a well-defined map $A_\Gamma\to A_\Lambda$.
\end{lemma}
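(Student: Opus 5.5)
The plan is to show that if $\gamma$ and $\gamma'$ are two syllable-reduced words representing the same element of $A_\Gamma$, then $F(\gamma)=F(\gamma')$ in $A_\Lambda$. First we need a description of the moves connecting syllable-reduced representatives. By Proposition~\ref{prop:shuffling}, applied to syllable-reduced words, any two syllable-reduced representatives of the same element are related by a finite sequence of swaps of adjacent syllables $s^a s'^{b}\leftrightarrow s'^{b}s^a$ with $s,s'$ commuting. (Merging syllables or cancelling would lower the syllable length, so it cannot occur.) Since $F$ is defined recursively, prefix by prefix, it suffices to treat a single swap. So suppose $\gamma=\gamma_0 s^a s'^b\gamma_1$ and $\gamma'=\gamma_0 s'^b s^a\gamma_1$ with $v,v'$ adjacent in $\Gamma$.

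The first step is to show $F(\gamma_0s^as'^b)=F(\gamma_0s'^bs^a)$. The recursion for the remaining syllables $\gamma_1$ depends only on the current value of $F$ and on the marked block $f(\cdot\,\Gamma)$, and the marked block agrees for both words by Lemma~\ref{lem:f_well_defined}. So once this equality is established, induction on the length of $\gamma_1$ gives $F(\gamma)=F(\gamma')$.

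For the first step, set $B=f(\gamma_0\cdot\Gamma)$, $h=F(\gamma_0)$, $w=f(\gamma_0\cdot v)$ and $w'=f(\gamma_0\cdot v')$. Then $w$ and $w'$ are boundary-adjacent in $B$, so at most one of them lies in $\short(B)$. This gives two cases.

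\emph{Neither is a shortcut.} Write $w=h\ell\cdot w_1$ and $w'=h\ell'\cdot w_2$ with $\ell,\ell'$ minimal. Because $w,w'$ are adjacent, there is a common copy $h\ell_0\cdot\Lambda$ of $\Lambda$ in $B$ containing both. The computation then mirrors Lemma~\ref{lem:doubling_commutes}. After doubling along $w$, the minimal relative label of $w'$ measured from $h\ell t_1^a$ is $t_1^{-a}\ell^{-1}\ell'$, possibly rewritten. Both orders should produce a product of the form $h\ell_0t_1^at_2^b$, times the appropriate relative label, and these agree since $t_1,t_2$ commute.

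\emph{Exactly one, say $w'$, is a shortcut.} Here we follow the case analysis in Lemma~\ref{lem:gliding_commutes}, with $\ell\in\{\ell_0,\ell_0t_{j_0}t_{j_1}\}$. The extra ingredient is that $F$ uses exponent $\hat b$ or $\hat b-1$ depending on whether the current copy $F(\cdot)\cdot\Lambda$ has vertices deeper than $w'$. We must check that this depth condition evaluates consistently in both orders. Doubling along $w$ translates the block and moves the base copy by $\ell t_1^a\ell^{-1}$ conjugated appropriately, so depths are preserved (Lemma~\ref{lem:stay_shortcut}). Gliding along $w'$ puts the new base copy at depth~$0$ of the glided block, containing $w$.

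The main obstacle is this bookkeeping: tracking which copy of $\Lambda$ in the block is $F(\cdot)\cdot\Lambda$, together with the $\hat b$ versus $\hat b-1$ choice. I would handle it by computing explicitly, in both orders, the resulting element as $h\ell_0\,x$, where $x$ is a short word in $t_{j_0},t_{j_1},t_{j_2}$. I expect both orders to reduce to expressions like $h\ell_0t_{j_0}t_{j_1}^{\hat b}t_{j_2}^at_{j_0}^{-1}$, with the $-1$ in the exponent exactly compensating for the base copy sitting below the shortcut.
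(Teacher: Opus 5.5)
\paragraph{Verdict.} Your reduction is the same as the paper's. Syllable swaps connect the representatives, and the recursion for $F$ depends only on the current value of $F$, the marked block and the vertex. So everything comes down to $F(\gamma_0s^as'^b)=F(\gamma_0s'^bs^a)$, split by whether one vertex is a shortcut. The gap is that this equality, which is the whole content of the lemma, is asserted rather than proved. Worse, the structural facts you propose to organise it around are false.

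\paragraph{The missing idea.} $F$ does not carry the current copy along with the block. After doubling along $w$, the value $F(\gamma_0s^a)=h\ell t_1^a$ labels the image of $h\ell\cdot\Lambda$, which is the copy containing $w$ nearest to $h\cdot\Lambda$. It does not label the image of $h\cdot\Lambda$. So the current copy can jump across an intersection of two copies. The case analysis must therefore be driven by where $h\cdot\Lambda$ sits relative to the copies containing $w$ and $w'$. The paper encodes this by comparing $\dep(\omega_0;B_0)$ with $\dep(\omega_2;B_0)$, where $\omega_0$ is the vertex along which $B_0$ was produced; $\omega_0$ lies in $F(\gamma_0)\cdot\Lambda$.

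Below, $\ell_{0,1},\ell_{0,2}$ are the minimal labels of $w,w'$ from $h$ (your $\ell,\ell'$). Also $\ell_{1,2}$ is the minimal label of $w'$ from $F(\gamma_0s^a)$, and $\ell_{2,1}$ that of $w$ from $F(\gamma_0s'^b)$.

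\paragraph{The shortcut case.} Your expectation that the depth test gives the same answer in both orders is false.
\begin{itemize}
\item Suppose $h\cdot\Lambda$ lies deeper than $w'$, while $w$ lies in the shallower of the two copies meeting in the shortcut pair $\{w',\omega\}$. Gliding first uses $\hat b-1$. Doubling first moves the current copy to the shallower side, and the glide then uses $\hat b$.
\item The opposite mismatch occurs when $h\cdot\Lambda$ is shallower and $w$ is on the deeper side.
\item What makes the lemma true is compensation: $\ell_{0,1}$ differs from $\ell_{0,2}$ by $t_2t$ or by $t^{-1}t_2^{-1}$, where $t$ is the type of the twin $\omega$.
\end{itemize}
Your claim that gliding puts the new current copy at depth $0$ and that this copy contains $w$ also fails in exactly these subcases. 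When $\hat b-1$ is used, the current copy is the depth-one copy of the glided block. In both subcases $\ell_{2,1}=t^{\pm1}$, not $1$. Finally, the expression $h\ell_0t_{j_0}t_{j_1}^{\hat b}t_{j_2}^at_{j_0}^{-1}$ you expect is the block parameter from Lemma~\ref{lem:gliding_commutes}, not a value of $F$. The paper obtains, for example, $h\ell_{0,2}t_2^{\hat b}\,t\,t_1^{a}$.

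\paragraph{The doubling case.} The same problem appears in milder form. Lemma~\ref{lem:doubling_commutes} concerns block parameters $h\ell t_j^a\ell^{-1}$. By contrast, $F$ uses the specific minimal $\ell\in\L$. Replacing $\ell$ by $\ell z$, with $z$ in the centraliser of $t_j$, changes $F$ by right multiplication by $z$. So your ``possibly rewritten'' is exactly where the work lies; note also that $t_1^{-a}\ell^{-1}\ell'$ is in general not an element of $\L$.

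You need to show the following when $w'$ lies in the intersection of two copies:
\begin{itemize}
\item $\ell_{0,1}=\ell_{0,2}t^{\pm1}$, where $t$ is the type of the interior vertex adjacent to $w'$, so $t$ commutes with $t_2$;
\item the sign is determined by which side $h\cdot\Lambda$ lies on;
\item then $\ell_{1,2}=1$ and $\ell_{2,1}=t^{\pm1}$.
\end{itemize}
Only after this do the two products visibly agree.
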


\begin{proof}
We must show that $F(g)$ is independent of the syllable-reduced word representing $g\in A_\Gamma$. We do this by induction on $\syl g$. The result is clear when $\syl g\le1$. 

Suppose that we have proved the lemma for elements of syllable length less than $d$, and let $g\in A_\Gamma$ have $\syl g=d\ge2$. Represent $g$ by a syllable-reduced word $\gamma=\gamma_0s_{1}^{a_1}s_{2}^{a_2}$. If $s_{1}$ and $s_{2}$ do not commute then there is nothing to prove, so suppose that $s_{1}$ and $s_{2}$ commute. By the inductive assumption, it suffices to show that $F(\gamma_0s_1^{a_1}s_2^{a_2})=F(\gamma_0s_2^{a_2}s_1^{a_1})$.

If $\gamma_0\neq 1$, let $\omega_0$ be the vertex along which we either doubled or glided to produce the block $B_0=f(\gamma_0\cdot \Gamma)$. Let $\omega_1=f(\gamma_0\cdot v_1)$, which is the vertex of the block $B_0$ along which we either double or glide to produce the block $B_1=f(\gamma_0s_1^{a_1}\cdot \Gamma)$.  Similarly, let $\omega_2=f(\gamma_0\cdot v_2)$ and $B_2=f(\gamma_0s_2^{a_2}\cdot\Gamma)$. Note that $\omega_1$ and $\omega_2$ are boundary-adjacent in $B_0$, because $s_{1}$ and $s_{2}$ commute. After relabelling, we can therefore assume that $\omega_1$ does not lie at the intersection of two of the copies of $\Lambda$ that make up $B_0$.

Let $\ell_{0,1}\in\L$ be minimal such that $\omega_1=F(\gamma_0)\ell_{0,1}\cdot w_1$, and define $\ell_{0,2}$ similarly. Since $\omega_1$ and $\omega_2$ are adjacent, $t_{1}$ and $t_{2}$ commute. In order to compute the two expressions for $F(g)$, we also need to consider relative labels for the blocks $B_1$ and $B_2$. Let $\ell_{1,2}\in\L$ be minimal such that $\omega_2=F(\gamma_0s_1^{a_1})\ell_{1,2}\cdot w_{2}$, and let $\ell_{2,1}\in\L$ be minimal such that $\omega_1=F(\gamma_0s_2^{a_2})\ell_{2,1}\cdot w_{1}$.

We split into cases according to whether $\omega_2$ is a shortcut of $B_0$.

\medskip\noindent\textbf{\linkdest{wd_dd}{Case 1}. $\omega_2\not\in\short(B_0)$. }
If $\omega_2$ is not a shortcut of $B_0$, then $B_1$ and $B_2$ are both obtained from $B_0$ by doubling, and the expression of Item~\ref{sh:roots} gives
\begin{align}
F(\gamma_0s_1^{a_1}s_2^{a_2}) \,=\, F(\gamma_0)\ell_{0,1}t_{1}^{a_1}\ell_{1,2}t_{2}^{a_2}, \quad
F(\gamma_0s_2^{a_2}s_1^{a_1}) \,=\, F(\gamma_0)\ell_{0,2}t_{2}^{a_2}\ell_{2,1}t_{1}^{a_1}. \tag{$*$} \label{eq:wd_dd}
\end{align}

If $\ell_{0,1}=\ell_{0,2}$ then it follows that $\ell_{1,2}=\ell_{2,1}=1$. Since $t_{1}$ and $t_{2}$ commute, the expressions in~\eqref{eq:wd_dd} define the same element of $A_\Lambda$.

If $\ell_{0,1}\ne\ell_{0,2}$, then $\omega_2$ lies at the intersection of two copies of $\Lambda$ that make up $B_0$ and $|\ell_{0,2}|<|\ell_{0,1}|$. Since $\omega_2\not\in\short(B_0)$, it is adjacent to a non-boundary vertex $w\in B_0$. Let $t=\type(w)$, which commutes with $t_{2}$.

If $\gamma_0=1$ or $\dep(\omega_0;B_0)\le\dep(\omega_2;B_0)$, then $\ell_{0,1}=\ell_{0,2}t$. It can also easily be seen that $\ell_{1,2}=1$ and $\ell_{2,1}=t$. Since $t_{2}$ commutes with both $t$ and $t_{1}$, the expressions in~\eqref{eq:wd_dd} define the same element of $A_\Lambda$. 

If $\dep(\omega_0;B_0)>\dep(\omega_2;B_0)$, then $\ell_{0,1}=\ell_{0,2}t^{-1}$, and we have $\ell_{1,2}=1$ and $\ell_{2,1}=t^{-1}$. Again, the expressions in~\eqref{eq:wd_dd} define the same element of $A_\Lambda$.

\medskip\noindent\textbf{\linkdest{wd_dg}{Case 2}. $\omega_2\in\short(B_0)$. }
If $\omega_2$ is a shortcut of $B_0$, then $B_2$ is obtained from $B_0$ by gliding. Also, by Lemma~\ref{lem:stay_shortcut}, the block $f(g\cdot \Gamma)$ is obtained from $B_1$ by gliding, whereas it is obtained from $B_2$ by doubling. We do not have uniform expressions for $F(\gamma_0s_1^{a_1})$ and $F(\gamma_0s_2^{a_2})$ this time, because the expressions in Item~\ref{sh:roots} depend on depth considerations. Let $w\in B_0$ be the shortcut twin of $\omega_2$, and let $t=\type(w)$, which commutes with $t_{2}$. 

First suppose that $\ell_{0,1}=\ell_{0,2}=\ell$. As in Case~\hyperlink{wd_dd}{1}, this implies that $\ell_{1,2}=\ell_{2,1}=1$. If $\dep(\omega_0;B_0)\le\dep(\omega_2;B_0)$, then from the expressions in Item~\ref{sh:roots} we compute that
\[
F(\gamma_0s_1^{a_1}s_2^{a_2}) \,=\, F(\gamma_0)\ell t_{1}^{a_1}t_{2}^{\hat a_2}, \quad
F(\gamma_0s_2^{a_2}s_1^{a_1}) \,=\, F(\gamma_0)\ell t_{2}^{\hat a_2}t_{1}^{a_1}.
\]
These expressions define the same element of $A_\Lambda$ because $t_{1}$ and $t_{2}$ commute. If, on the other hand, $\dep(\omega_0;B_0)>\dep(\omega_2;B_0)$, then the expressions in Item~\ref{sh:roots} give
\[
F(\gamma_0s_1^{a_1}s_2^{a_2}) \,=\, F(\gamma_0)\ell t_{1}^{a_1}t_{2}^{\hat a_2-1}, \quad
F(\gamma_0s_2^{a_2}s_1^{a_1}) \,=\, F(\gamma_0)\ell t_{2}^{\hat a_2-1}t_{1}^{a_1},
\]
which again define the same element of $A_\Lambda$.

Now suppose instead that $\ell_{0,1}\ne\ell_{0,2}$. As in Case~\hyperlink{wd_dd}{1}, this implies that $|\ell_{0,2}|<|\ell_{0,1}|$. If $\dep(\omega_0;B_0)<\dep(\omega_2;B_0)$, then $\ell_{0,1}=\ell_{0,2}t_{2}t$. It can also be seen that $\ell_{1,2}=1$ and $\ell_{2,1}=t$. 
Moreover, $\omega_1$ is a vertex of $F(\gamma_0s_1^{a_1})\Lambda$ that has greater depth inside $B_1$ than $\omega_2$. From the expressions in Item~\ref{sh:roots}, we compute
\[
F(\gamma_0s_1^{a_1}s_2^{a_2}) \,=\, F(\gamma_0)\ell_{0,2}t_{2}tt_{1}^{a_1}t_{2}^{\hat a_2-1}, \quad
F(\gamma_0s_2^{a_2}s_1^{a_1}) \,=\, F(\gamma_0)\ell_{0,2}t_{2}^{\hat a_2}tt_{1}^{a_1}.
\]
Since $t_{2}$ commutes with both $t$ and $t_{1}$, these expressions define the same element of $A_\Lambda$.

Finally, suppose that $\ell_{0,1}\ne\ell_{0,2}$ and $\dep(\omega_0;B_0)>\dep(\omega_2;B_0)$. In this case, $\ell_{0,1}=\ell_{0,2}t^{-1}t_{2}^{-1}$. This time we have $\ell_{1,2}=1$, and $\ell_{2,1}=t^{-1}$, because $w$ is a non-boundary vertex of the block $B_2$, as $B_2$ is obtained from $B_0$ by gliding. 
There are vertices of $F(\gamma_0)\cdot \Lambda$ with greater depth than $\omega_2$ inside $B_0$, but no vertex of $F(\gamma_0s_1^{a_1})\cdot \Lambda$ has greater depth inside $B_1$ than $\omega_2$. The expressions of Item~\ref{sh:roots} therefore yield
\[
F(\gamma_0s_1^{a_1}s_2^{a_2}) \,=\, F(\gamma_0)\ell_{0,2}t^{-1}t_{2}^{-1}t_1^{a_1}t_{2}^{\hat a_2}, \quad
F(\gamma_0s_2^{a_2}s_1^{a_1}) \,=\, F(\gamma_0)\ell_{0,2}t_{2}^{\hat a_2-1}t^{-1}t_{1}^{a_1},
\]
which again define the same element of $A_\Lambda$. This completes the proof.
\end{proof}

In view of Lemma~\ref{lem:F_well_defined}, we also write $F:A_\Gamma\to A_\Lambda$. 

Our goal will be to show that $F$ is a quasiisometric embedding. Each syllable-reduced word representing $g$ gives a word representing $F(g)$, and we can use an arbitrary such word to show that $F$ is Lipschitz. Showing that $F$ is coarsely colipschitz is more challenging, and for that we will need to understand how the images of different words representing $g$ differ.

\bsh{Changing representative of $F(g)$} \label{sh:rep_F_change}
Assume $g\in A_\Gamma$ is represented by a syllable-reduced word $\gamma=s_1^{a_1}\dots s_d^{a_d}$. Every other syllable-reduced word representing $g$ can be obtained by interchanging commuting syllables.
Let $\gamma'$ be such a word. That is,
\[\gamma'=s_1^{a_1}\dots s_{c-1}^{a_{c-1}}s_{c+1}^{a_{c+1}}\dots s_{c+k}^{a_{c+k}}s_c^{a_c}s_{c+k+1}^{a_{c+k+1}}\dots s_d^{a_d}\] 
is the syllable-reduced word representing $g$ obtained from shuffling $s_c^{a_c}$ past $k$ syllables that commute with it.
We will investigate the two words $\lambda=F(\gamma)$ and $\lambda'=F(\gamma')$ representing $F(g)$.

We write $\lambda=\ell_1t_1^{b_1}\dots \ell_dt_d^{b_d}$ and $\lambda'=\ell'_1{t'}_1^{b'_1}\dots \ell'_d{t'}_d^{b'_d}$, as in Item~\ref{sh:roots}.

Let $\gamma_e$ and $\gamma'_e$ be the subwords consisting of the first $e$ syllables of $\gamma$ and $\gamma'$ respectively.
By the definition of $F$ in Item~\ref{sh:roots} we have that $\lambda_e=F(\gamma_e)$ and $\lambda_e'=F(\gamma_e')$ are subwords of $\lambda$ and $\lambda'$ respectively.

\begin{lemma}
    We have
    \begin{itemize}
        \item $\lambda_{c-1}=\lambda'_{c-1}$,
        \item $t_c$ commutes with $\ell_{c+1}t_{c+1}^{b_{c+1}}\dots \ell_{c+k}t_{c+k}^{b_{c+k}}$,
        \item If $\ell_{c+1}=1$ then $\ell'_c=\ell_c$, otherwise $\ell'_c=\ell_c\ell_{c+1}$ as elements of $A_{\Lambda}$, and 
        \item For $e=c+\kappa$ where $1\leq \kappa<k$ we have $\ell'_{c+\kappa}=\ell_{c+\kappa+1}$ and $\ell'_{c+k}=1$.
    \end{itemize}
\end{lemma}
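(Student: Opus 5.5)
The plan is to prove the four items in order, by tracking the blocks $B_e=f(\gamma_e\cdot\Gamma)$ and $B'_e=f(\gamma'_e\cdot\Gamma)$ together with the recursive definition of $F$ in Item~\ref{sh:roots}.

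\textbf{First item.} The words $\gamma$ and $\gamma'$ agree on their first $c-1$ syllables. Since $F$ is defined recursively syllable by syllable, we get $\gamma_{c-1}=\gamma'_{c-1}$ and hence $\lambda_{c-1}=\lambda'_{c-1}$ immediately.

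\textbf{Second item.} Since $s_c$ commutes with each of $s_{c+1},\dots,s_{c+k}$, the vertex $v_c$ is adjacent in $\Gamma$ to each $v_{c+\kappa}$. Note that $v_c$ is fixed throughout: $\gamma_{c+\kappa}\cdot v_c=\gamma_{c-1}\cdot v_c$ for $0\le\kappa\le k$.

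By Lemma~\ref{lem:f_well_defined}, doubles and glides fix the images of stars. So the vertex $\omega=f(\gamma_{c-1}\cdot v_c)$ is the same vertex of $\Lambda^\ext$, lying in every block $B_{c-1},\dots,B_{c+k}$. Each step $B_{c+\kappa-1}\to B_{c+\kappa}$ is a double or glide along a vertex $f(\gamma_{c+\kappa-1}\cdot v_{c+\kappa})$ that is adjacent to $\omega$.

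I will show by induction on $\kappa$ that $\omega=\lambda_{c-1}\ell_c\ell_{c+1}t_{c+1}^{b_{c+1}}\cdots\ell_{c+\kappa}t_{c+\kappa}^{b_{c+\kappa}}\cdot w$, where $t_c=\type(w)$. Equivalently, the element $\ell_{c+1}t_{c+1}^{b_{c+1}}\cdots$ fixes the vertex $\lambda_{c-1}\ell_c\cdot w$ under the action.

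\begin{itemize}
\item The letter $t_{c+\kappa}$ is the type of a neighbour of $\omega$, so it commutes with $t_c$.
\item Each $\ell_{c+\kappa}$ is a minimal relative label moving between copies of $\Lambda$ in the block that both contain $\omega$, or pass through its star.
\end{itemize}

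From the explicit shapes of basic relative labels (products of $t_{\pm1},t_{\pm2}$ types read off Item~\ref{sh:blocks}), each such $\ell_{c+\kappa}$ is a product of letters whose types are adjacent to $\omega$ in $\Lambda$. I would verify this by the same case split used in Lemma~\ref{lem:F_well_defined}: double versus glide, and relative depths.

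Since the stabiliser of a vertex $u\cdot w$ is $u\,\langle\star(w)\rangle\,u^{-1}$, the product then commutes with $t_c$.

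\textbf{Third item.} Compute $\ell'_c$, the minimal label with $\omega=\lambda'_{c+k}\ell'_c\cdot w'$. Because $\lambda_{c-1}=\lambda'_{c-1}$, the block $B'_{c-1}=B_{c-1}$ and $\ell'_c$ is determined by the position of $\omega$ relative to $F(\gamma'_{c-1})$. Here I misread the indexing, so I will re-index by following the lemma literally: $\ell'_c$ is the label used when applying the $(c+1)$st original syllable first.

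\begin{itemize}
\item If $\ell_{c+1}=1$, then $F(\gamma_c)\cdot\Lambda$ already contains $\omega$'s neighbour $f(\gamma_{c}\cdot v_{c+1})$ with trivial relative label, so $\ell'_c=\ell_c$.
\item Otherwise the concatenation $\ell_c\ell_{c+1}$ is the route from $F(\gamma_{c-1})\cdot\Lambda$ to the copy containing that vertex. Minimality then follows as in Case~1 of Lemma~\ref{lem:F_well_defined}.
\end{itemize}

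\textbf{Fourth item.} For $1\le\kappa<k$, the blocks $B'_{c+\kappa}$ differ from $B_{c+\kappa+1}$ only by the double or glide along $\omega$. That operation fixes the star of $\omega$, which contains the relevant gluing vertices, so the relative labels coincide and $\ell'_{c+\kappa}=\ell_{c+\kappa+1}$.

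Finally, $\ell'_{c+k}=1$ should follow because after shuffling, $F(\gamma'_{c+k-1})\cdot\Lambda$ contains $\omega$ directly.

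\textbf{Main obstacle.} I expect the hardest step to be the second item and the accompanying depth bookkeeping. I will handle it by repeating the case analysis of Lemma~\ref{lem:F_well_defined} (shortcut or not, comparing $\dep(\omega_0)$ with $\dep(\omega)$) one step at a time, using Lemmas~\ref{lem:doubling_commutes}, \ref{lem:gliding_commutes} and~\ref{lem:stay_shortcut}.
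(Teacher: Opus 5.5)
\paragraph{Verdict.} There is a genuine gap: your first two bullets are fine, but your arguments for the third and fourth bullets do not work, and in fact those two bullets are false as stated.

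\paragraph{First two bullets.} These follow the paper's route. The paper also observes that whenever $\ell_{c+\kappa}\neq 1$, the vertex $\omega=f(\gamma_{c-1}\cdot v_c)$ lies in the intersection of two consecutive copies of $\Lambda$ in the current block. Then $\ell_{c+\kappa}$ is the label by which those copies differ in Item~\ref{sh:blocks}, so it commutes with $t_c$, and one iterates. You do not need the case split of Lemma~\ref{lem:F_well_defined} for this. One small correction: the right reformulation is $X\cdot w=w$ for $X=\ell_{c+1}t_{c+1}^{b_{c+1}}\cdots$, not that $X$ fixes $\lambda_{c-1}\ell_c\cdot w$.

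\paragraph{Third bullet.} From $\ell_{c+1}=1$ you only learn that $\ell_c$ is \emph{some} basic relative label taking $F(\gamma_{c-1})\cdot\Lambda$ to a copy containing $\omega_{c+1}=f(\gamma_{c-1}\cdot v_{c+1})$, since $t_c^{b_c}$ fixes the vertex of type $t_{c+1}$. But $\ell'_c$ is the \emph{minimal} such label, and $\omega_{c+1}$ may also lie in a copy closer to $F(\gamma_{c-1})\cdot\Lambda$ that misses $\omega$. This really happens. In the cyclic labelling of Section~\ref{sec:building_qie}, $f(v_1)=w_1\in(1\cdot\Lambda)\cap(t_2\cdot\Lambda)$ and $f(v_2)=t_2\cdot w_0$. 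Take $\gamma=s_2^as_1^b$ and $\gamma'=s_1^bs_2^a$, so $c=k=1$. Then $\ell_1=t_2$, $F(s_2^a)=t_2t_0^a$ and $\ell_2=1$, but $\ell'_1=1$ and $\ell'_2=t_2$. So both ``$\ell'_c=\ell_c$'' and ``$\ell'_{c+k}=1$'' fail. This is exactly the configuration in Case~1 of Lemma~\ref{lem:F_well_defined} with $\dep(\omega_0;B_0)\le\dep(\omega_2;B_0)$: there $\ell_{1,2}=1$, yet $\ell_{0,1}=\ell_{0,2}t$ and $\ell_{2,1}=t$. So the very case you invoke records a violation of the third bullet.

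\paragraph{Fourth bullet.} Your argument for $\ell'_{c+\kappa}=\ell_{c+\kappa+1}$ has a second, independent problem. When $\omega$ is a shortcut, $s_c$ acts by a glide, which is not a translation. So ``it fixes the star of $\omega$'' does not make the relative labels agree. After the glide, the copies containing the two boundary neighbours of $\omega$ differ by a single letter, the type of the shortcut twin, rather than by the two-letter label of a $q$-stage. For example, take $p=1$, $q\ge1$ and cyclic labels, so $f(v_3)=t_2\cdot w_{-1}$ is an early shortcut. The lazy word $\gamma=s_3^as_2^bs_4^e$ gives $\ell_3=t_{-2}$ but $\ell'_2=t_{-1}t_{-2}$. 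The discrepancy $t_{-1}$ is absorbed by the exponents $\hat a$ versus $\hat a-1$, so $F(\gamma)=F(\gamma')$ still holds.

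\paragraph{What can be salvaged.} The paper's proof also settles bullets three and four in a sentence each, using that $v_c$ has only two neighbours in $C_m$, so $s_{c+1},\dots,s_{c+k}$ alternate. Your plan never uses this alternation, but the examples show these bullets cannot be proved as stated in any case. They need at least an orientation hypothesis such as $|\ell_c|\le|\ell'_c|$, which laziness of $\gamma$ provides. Even then, the shortcut example above shows that when $\omega$ is a shortcut they can hold only up to powers of $t_c$.
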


\begin{proof}
Note that $\gamma'_{c-1}=\gamma_{c-1}$, and so $\lambda'_{c-1}=\lambda_{c-1}$. To ease notation, let us write $\omega_c=f(\gamma_{c-1}\cdot v_{c})$ and $\omega_{c+1}=f(\gamma_{c-1}\cdot v_{{c+1}})$. By commutation, the vertices $\omega_c$ and $\omega_{c+1}$ are adjacent in the block $f(\gamma_{c-1}\cdot \Gamma)$. Hence $t_{c}$ commutes with $t_{{c+1}}$. If $\ell_{c+1}\ne1$, then it must be that $\omega_c$ lies at the intersection of two of the copies of $\Lambda$ that make up the block $f(\gamma_{c-1}\cdot \Gamma)$, and $\ell_{c+1}$ commutes with $t_{c}$. Thus $t_{c}$ commutes with $\ell_{c+1}t_{{c+1}}$. Iterating this same argument, we find that $t_{c}$ commutes with the entire word $\ell_{c+1}t_{{c+1}}^{b_{c+1}}\dots\ell_{c+k}t_{{c+k}}^{b_{c+k}}$.



As noted above, $\omega_c$ and $\omega_{c+1}$ are adjacent. If $\ell_{c+1}=1$, then $\ell'_c=\ell_c$. Otherwise, the minimal element of $\L$ required to pass from $F(\gamma_{c-1})\cdot \Lambda$ to a copy of $\Lambda$ containing $\omega_{c+1}$ differs from $\ell_c$, and the difference is $\ell_{c+1}$. But that minimal element is precisely $\ell'_c$. Thus $\ell'_c=\ell_c\ell_{c+1}$ as elements of $A_\Lambda$, though possibly these are different representatives of that element.

If $k>1$, then since $s_{{c+2}}$ commutes with $s_{c}$ and $\gamma$ is syllable-reduced, the two neighbours of the vertex $v_{c}$ in $\Gamma=C_m$ are $v_{{c+1}}$ and $v_{{c+2}}$. Thus $v_{c+1}=v_{c+\kappa}$ for all odd $\kappa\le k$, and $v_{c+2}=v_{c+\kappa}$ for all even $\kappa\le k$. Since $\ell_{c+\kappa}$ does not commute with $t_{{c+\kappa}}$, for each positive $\kappa<k$ we have $\ell'_{c+\kappa}=\ell_{c+\kappa+1}$. These are all trivial if $\omega_c$ does not lie at the intersection of two copies of $\Lambda$ that make up $f(\gamma_{c-1}\cdot \Gamma)$, and otherwise $\ell'_{c+\kappa+1}=(\ell'_{c+\kappa})^{-1}$ commutes with $t_{c}$. Finally, we have $\ell'_{c+k}=1$, because $\omega_c$ is adjacent to $f(\gamma_{c-1}\cdot v_{i_{c+k}})$. 
\end{proof}
%
%
\esh


We wish to find short representatives of $F(g)$. We do this by making a certain choice of an order in which to read the letters of $g$, obtaining a representative of $g$ that we call \emph{lazy}, and then applying the definition of $F$ to this lazy representative. Recall that $t$ denotes the standard generator of $A_\Lambda$ corresponding to the vertex $w$ of $\Lambda$.

\bsh{Lazy representatives in $A_\Gamma$} \label{sh:lazy}
Intuitively, a syllable-reduced word in $A_\Gamma$ is lazy if whenever there is a choice of two vertices to extend along in the construction of $f$, it chooses the one that is closer to the previous vertex we extended along.
See Figure~\ref{fig:lazy}.

We note that if $s_1^{a_1}\dots s_{d}^{a_d}\in A_\Gamma$ is such that $s_{c+1}$ and $s_{c+2}$ commute for some $c\leq d-2$, then $f(s_1^{a_1}\dots s_{c}^{a_c}\cdot v_{c+1})$ and $f(s_1^{a_1}\dots s_{c}^{a_c}\cdot v_{c+2})$ are boundary-adjacent in the block $f(s_1^{a_1}\dots s_c^{a_c}\cdot\Gamma)$.

\begin{definition}[Lazy]
Let $\gamma=s_{1}^{a_1}\dots s_{d}^{a_d}$ be a syllable-reduced word. We say that $\gamma$ is \emph{lazy} if it satisfies the following condition for every $c\leq d-2$ for which $s_{{c+1}}$ and $s_{{c+2}}$ commute. 
    Let $\gamma_c=s_{1}^{a_1}\dots s_{c}^{a_c}$, and 
    for $k\in \set{1,2}$ let $\ell_k\in \cal{L}$ be the minimal basic relative labels satisfying $f(\gamma_c\cdot v_{{c+k}})=F(\gamma_c)\ell_k\cdot w_k$. We require the following.
    \begin{itemize}
        \item If $\ell_1\neq \ell_2$, then $|\ell_1|<|\ell_2|$.
        \item If $\ell_1=\ell_2\neq 1$, then $t_2$ does not commute with any final letter of $\ell_1$. (At most one of the $t_{k}$ can commute with a final letter of $\ell_1$.)
        \item If $\ell_1=\ell_2=1$, then $t_{2}$ does not. (At most one of the $t_{k}$ can commute with $\type(f(\gamma_c\cdot v_c))$.) 
    \end{itemize}
\end{definition}


\esh

\begin{figure}[ht]
\centering
\includegraphics[width=15cm,trim= 0 10mm 0 15mm]{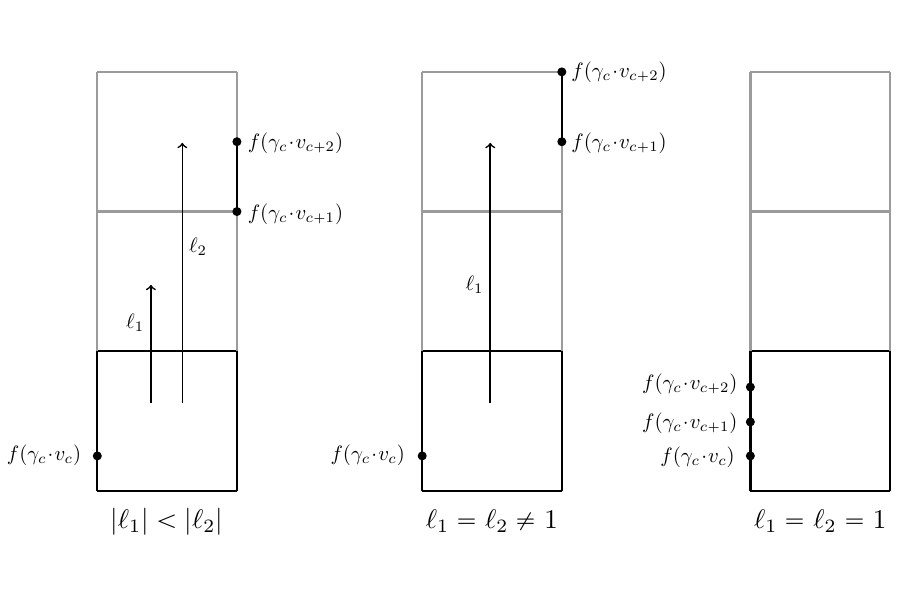}
\caption{The conditions defining laziness of a syllable-reduced word over $S$.} \label{fig:lazy}
\end{figure}

\begin{lemma} \label{lem:lazy_exists}
Every $g\in A_\Gamma$ has at least one lazy representative.
\end{lemma}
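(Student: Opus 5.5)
The plan is to build a lazy representative greedily, one syllable at a time, by induction on syllable length. By \cref{prop:shuffling}, the syllable-reduced representatives of $g$ are exactly the words obtained from one of them by interchanging adjacent commuting syllables. Since $\Gamma=C_m$ is triangle-free, at any stage the available next syllables form a small set. After fixing a prefix $\gamma_c$, the syllables that can be brought to position $c+1$ all correspond to vertices of $\Gamma$ that pairwise commute, because each of them can be shuffled past the others to the front. Triangle-freeness therefore means there are at most two candidates, and when there are two they are adjacent in $\Gamma$. By \cref{lem:F_well_defined}, $F(\gamma_c)$ depends only on the element $\gamma_c$ represents, and $f(\gamma_c\cdot\Gamma)$ is well defined by \cref{lem:f_well_defined}. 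So the labels $\ell_1,\ell_2$ in the definition of lazy are determined by the prefix element and the two candidate vertices, not by the particular word.

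The construction goes as follows. Suppose a prefix $\gamma_c$ of some syllable-reduced representative of $g$ has been chosen, with $g=\gamma_c g''$ and $\syl g=c+\syl{g''}$. Look at the set of syllables of $g''$ that can appear first in a syllable-reduced representative of $g''$. If this set has one element, we take it. If it has two, say $s_x^{a_x}$ and $s_y^{a_y}$ with $v_x,v_y$ adjacent, we compute the minimal relative labels $\ell_x,\ell_y$ of $f(\gamma_c\cdot v_x)$ and $f(\gamma_c\cdot v_y)$ relative to $F(\gamma_c)$. We then choose which one to put at position $c+2$ (the ``$t_2$'' role) according to the three bullets. If $|\ell_x|\ne|\ell_y|$, the shorter one goes first. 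If $\ell_x=\ell_y\ne1$, the one whose type commutes with a final letter of $\ell_x$ goes first. If $\ell_x=\ell_y=1$, the one whose type commutes with $\type(f(\gamma_{c-1}\cdot v_c))$ goes first. After placing both syllables, we continue with the remainder. The laziness condition only constrains consecutive commuting pairs, and when two candidates exist they are precisely the next two syllables of the constructed word. So the condition at index $c$ is ensured by this choice.

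Two points need checking. First, the condition must also hold at indices where the next two syllables commute but we had only one candidate. That cannot happen: if $s_{c+1},s_{c+2}$ commute then both are candidates. Second, and this is the main obstacle, the choice must always be possible. We need that when $\ell_x\ne\ell_y$ their lengths genuinely differ, and that in the equal cases at most one of $t_x,t_y$ commutes with the relevant final letter (the parenthetical remarks in the definition). For the first, I would argue as in Case~1 of \cref{lem:F_well_defined}: $f(\gamma_c\cdot v_x)$ and $f(\gamma_c\cdot v_y)$ are boundary-adjacent in the block, so one lies on the intersection of two consecutive copies of $\Lambda$ in the block and the labels differ by one or two letters ($t$ or $t_2t$), giving strict inequality. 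For the second, $t_x$ and $t_y$ commute and are distinct vertices of $C_n$ with $n>6$. If both commuted with a common letter $u$, then $w_x,w_y,\type^{-1}(u)$ would form a triangle or $u$ would equal one of them, and the latter is excluded by minimality of the labels or by syllable-reducedness. This case analysis over the block structure (boundary versus intersection vertices, shortcuts) is where I expect the real work to lie. Once it is done, the greedy procedure terminates after $\syl g$ steps and outputs a lazy representative.
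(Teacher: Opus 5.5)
\textbf{The gap.} The problem is in how your greedy procedure handles two candidates. When the candidate set after $\gamma_c$ is $\{s_x^{a_x},s_y^{a_y}\}$, you place \emph{both} syllables and jump straight to the prefix $\gamma_{c+2}$. As a result, the laziness condition at index $c+1$ is never checked. That condition concerns the pair $(s_{c+2},s_{c+3})$, read after the prefix $\gamma_{c+1}$, and this pair can commute.

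Here is a concrete instance. Suppose the bullets tell you to put $s_x$ first, and the next syllable of $g$ is $s_z^{a_z}$, where $v_z$ is the other neighbour of $v_y$ in $C_m$ (for example $g=s_xs_ys_z$). Then:
\begin{itemize}
\item $s_z$ was not a candidate after $\gamma_c$, since it cannot pass $s_x$;
\item your procedure outputs $\gamma_cs_x^{a_x}s_y^{a_y}s_z^{a_z}\cdots$;
\item nothing you say guarantees that $(s_y,s_z)$ satisfies the conditions relative to $F(\gamma_cs_x^{a_x})$;
\item the only alternative, $\gamma_cs_x^{a_x}s_z^{a_z}s_y^{a_y}\cdots$, is never considered.
\end{itemize}
Your ``first point'' does not cover this case, because it only concerns indices at which a step of the procedure was actually performed.

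\textbf{How to close it.} One option is an additional block-geometry argument showing that the choice made at index $c$ forces the condition at index $c+1$. The simpler option is to place one syllable per step. After placing $s_x^{a_x}$, the unplaced $s_y$ is still a candidate. By triangle-freeness and syllable-reducedness, the next candidate set is contained in $\{s_y,s_z\}$.
\begin{itemize}
\item If $s_z$ is placed next, it does not commute with $s_x$, so index $c$ imposes no condition.
\item If $s_y$ is placed next, the condition at index $c$ is exactly the one you already arranged.
\end{itemize}

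\textbf{Comparison with the paper.} The paper avoids this bookkeeping by inducting from the other end. It assumes $\gamma_{d-1}$ is lazy, so only the final pair $(s_{d-1},s_d)$ can fail. If it fails, the paper swaps the two syllables and replaces $\gamma_{d-2}s_d^{a_d}$ by a lazy representative $\gamma'$. The final syllable of $\gamma'$ is then either $s_d^{a_d}$, in which case the new final pair is the reversed pair $(s_d,s_{d-1})$, or it is some other generator, which would give three pairwise commuting generators.

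Both arguments rest on the same dichotomy: if one order of a commuting pair violates the conditions, the other order satisfies them. You correctly isolate this as the point needing the block-structure check. The paper takes it for granted, via the parenthetical remarks in the definition of laziness and the length comparison in Case~1 of the well-definedness of $F$.
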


\begin{proof}
The lemma is vacuously true for elements of syllable length at most one. For $d\ge2$, suppose that we have proved the lemma for elements of syllable length less than $d$. Given $g\in A_\Gamma$ with $\syl g=d$, let $\gamma=s_{1}^{a_1}\dots s_{d}^{a_d}$ be a syllable-reduced word representing $g$. For $c<d$, let $\gamma_c=s_{1}^{a_1}\dots s_{c}^{a_c}$.
By induction, we can assume that $\gamma_{d-1}$ is lazy. 

If the generators $s_{d}$ and $s_{{d-1}}$ do not commute then $\gamma$ is lazy. Otherwise, $f(\gamma_{d-2}\cdot v_{{d-1}})$ and $f(\gamma_{d-2}\cdot v_{d})$ are boundary-adjacent vertices of the block $f(\gamma_{d-2}\cdot \Gamma)$. For $k\in\{0,1\}$, let $\ell_k\in\L$ be minimal such that $f(\gamma_{d-2}\cdot v_{{d-k}}) = F(\gamma_{d-2})\ell_k\cdot w_{k}$. If any of the conditions of Item~\ref{sh:lazy} are met, then $\gamma$ is again lazy. Otherwise, since $s_{d}$ and $s_{{d-1}}$ commute, we can also represent $g$ by the syllable-reduced word $\gamma_{d-2}s_{d}^{a_d}s_{{d-1}}^{a_{d-1}}$. 


By induction, there is a lazy representative $\gamma'$ of the element represented by $\gamma_{d-2}s_{d}^{a_d}$, which is obtained by shuffling syllables. Let $s_{c}^{a_c}$ be the final syllable of $\gamma'$. We claim that $\gamma's_{{d-1}}^{a_{d-1}}$ is lazy. If not, then it is because $s_{c}$ commutes with $s_{{d-1}}$ and does not satisfy the conditions in Item~\ref{sh:lazy}. If $s_{c}=s_{d}$ then the conditions are satisfied by assumption, so $s_{c}\ne s_{d}$. But in order for $s_{c}$ to be the final letter of $\gamma'$, it must commute with $s_{d}$ and be distinct from both $s_{d}$ and $s_{{d-1}}$, because $\gamma$ is syllable-reduced. But now the generators $s_{c}$, $s_{d}$, and $s_{{d-1}}$ commute pairwise, which is impossible.
\end{proof}

The following is a key step in showing that the map $F:A_\Gamma\to A_\Lambda$ is a quasiisometric embedding. Recall that $T$ denotes the set of generators of $A_\Lambda$ and their inverses. 

\begin{proposition} \label{prop:roots_structure}
Let $\gamma=s_{1}^{a_1}\dots s_{d}^{a_d}$ be a lazy word. Let $\gamma_c$ be the initial substring ending with $s_{c}^{a_c}$, and let $\lambda_c=F(\gamma_c)$. The word $\lambda=\lambda_d=\ell_1t_{1}^{b_1}\dots\ell_dt_{d}^{b_d}$ can only fail to be syllable-reduced if the following happens for some values of $c$ and $k\ge0$:
\begin{itemize}
\item   the label $\ell_{c+k+1}\ne1$ begins with letters $\tau_1^{\epsilon_{c+k+1}}$ and $\tau_2^{\epsilon_{c+k+1}}$, with $\epsilon_{c+k+1}\in\{1,-1\}$;
\item   the label $\ell_c\ne1$ ends with two letters $\tau_1^{\epsilon_c}$ and $\tau_2^{\epsilon_c}$, such that $\tau_1,\tau_2\in T$ correspond to a shortcut pair in $f(g_{c-1}\cdot \Gamma)$, with $\epsilon_c\in\{1,-1\}$;
\item   after relabelling $\tau_1$ and $\tau_2$, the generators $t_{c},\dots,t_{{c+k}}$ all commute with $\tau_1$ but not $\tau_2$;
\item   if $k>0$, then $\ell_{c+1}=\dots=\ell_{c+k}=1$.
\end{itemize}
Let $\lambda'$ be the word obtained from $\lambda$ as follows. For each pair $(c,k)$ as above, shuffle the last letter of $\ell_c$ past $t_{c},\dots,t_{{c+k}}$, then cancel $\tau_1^{\epsilon_c}$ with $\tau_1^{\epsilon_{c+k+1}}$ if $\epsilon_c=-\epsilon_{c+k+1}$. The word $\lambda'$ is syllable-reduced, and in particular has minimal word-length among representatives of $F(g)$.
\end{proposition}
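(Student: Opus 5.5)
We use the standard criterion for syllable-reducedness in a RAAG, which follows from Proposition~\ref{prop:shuffling}. A word $u_1^{e_1}\cdots u_r^{e_r}$ with each $e_i\ne0$ fails to be syllable-reduced exactly when some pair $i<j$ has $u_i=u_j$ and $u_i$ commutes with every $u_k$ for $i<k<j$. In that case the two syllables can be shuffled together and merged.

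We apply this to $\lambda=\ell_1t_1^{b_1}\cdots\ell_dt_d^{b_d}$ and classify every such \emph{merging pair} of letters. The ingredients available are these.
\begin{itemize}
\item Each basic relative label $\ell_c$ is syllable-reduced. It is a subword of the explicit products $t_{\pm2}$, $t_{\pm1}t_{\pm2}$ (after rotation), so adjacent letters never commute except within a shortcut pair $t_{\pm1}t_{\pm2}$.
\item By Item~\ref{sh:roots}, if $\ell_c\ne1$ then its final letter does not commute with $t_c$.
\item $\gamma$ is syllable-reduced and lazy.
\end{itemize}

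The argument splits according to the syllables of the pair. The syllables $t_c^{b_c}$ are the ``main'' ones; the others come from inside the labels.

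\textbf{Case (i): both syllables are main syllables.} Suppose $t_c^{b_c}$ and $t_{c'}^{b_{c'}}$ merge with $c<c'$. Then $t_c$ commutes with everything in between, in particular with all intermediate labels. We translate this back to $\Gamma$ through the blocks. The vertices $f(\gamma_{c-1}\cdot v_c)$, \dots\ stay in the star of $f(\gamma_{c-1}\cdot v_c)$, so $f$ is fixed on that star by the doubling and gliding operations. Together with the fact (Lemma~\ref{lem:stay_shortcut} and Item~\ref{sh:gliding}) that glides do not introduce new letters of type $t_c$ adjacent to the star, this forces $v_{c'}=v_c$ with $s_c$ commuting with $s_{c+1},\dots,s_{c'-1}$. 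That contradicts syllable-reducedness of $\gamma$. One subtlety arises when $b_c$ or $b_{c'}$ is $\hat a-1=0$ (the case $a=-1$ giving $\hat a-1=a-1\ne0$ is fine, but $\hat a-1=0$ for $a=\ldots$ never happens since $\hat a\ne1$). So the exponents are nonzero and no syllable vanishes, and I would check this explicitly.

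\textbf{Case (ii): a label letter merges with a main syllable or with another label letter.} This is where laziness is used. The three bullets of the definition in Item~\ref{sh:lazy} say that when we move from $\gamma_c$ to $\gamma_{c+1}$ along a commuting pair, we choose the vertex with the shorter label, or the one not commuting with the final letter of the previous label. This rules out a letter of $\ell_{c+1}$ cancelling back past $t_c$.

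What survives is that a letter of $\ell_c$ can only pass $t_c,\dots,t_{c+k}$ if it commutes with all of them. The only adjacent commuting letters in labels are shortcut pairs $\tau_1\tau_2$ (Item~\ref{sh:shortcuts}). Since the final letter of $\ell_c$ cannot commute with $t_c$, the letter that moves must be the penultimate one, after swapping $\tau_1\tau_2$. This produces the stated configuration: $\ell_c$ ends in $\tau_1^{\epsilon_c}\tau_2^{\epsilon_c}$; $t_c,\dots,t_{c+k}$ commute with $\tau_1$ but not with $\tau_2$; the intermediate labels are trivial, since otherwise their first letter blocks $\tau_1$; and $\ell_{c+k+1}$ begins with $\tau_1\tau_2$, which is the only way to meet $\tau_1$ again. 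This identification is the main obstacle. It needs a careful case check, via depth in blocks as in the proof of Lemma~\ref{lem:F_well_defined}, that the new block after a glide places $\tau_1$ as the leading letter only in this way.

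\textbf{Conclusion.} Finally, we show that $\lambda'$ is syllable-reduced. After performing the shuffles and cancellations, rerun the classification. The moved letter $\tau_1$ either cancels or sits adjacent to $\tau_1^{\epsilon_{c+k+1}}$ with the same sign, merging into a single syllable; in both cases no new merging pair is created, since $\tau_2$ does not commute with $t_{c+k}$. Every other candidate pair was already excluded by Cases (i) and (ii). Syllable-reducedness then gives minimal length, because no cancellation is possible in a syllable-reduced word (Proposition~\ref{prop:shuffling}).
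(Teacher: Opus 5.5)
Your reduction to classifying ``merging pairs'' through the shuffling criterion is sound, but there is a genuine gap: the steps that carry the content are asserted, not proved, and you flag the decisive one yourself as ``the main obstacle''.

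In Case~(i), what $t_{c'}=t_c$ commuting with everything in between actually gives is $f(\gamma_{c-1}\cdot v_c)=f(\gamma_{c'-1}\cdot v_{c'})$ in $\Lambda^\ext$. Passing from this to ``$v_{c'}=v_c$ and $s_c$ commutes with the intervening syllables'' is an injectivity statement for $f$. The paper only obtains that as a consequence of \cref{prop:building_cycles_qie}, and hence of this very proposition, so quoting it here is circular. Also, types that commute in $A_\Lambda$ need not be images of adjacent vertices of $\Gamma$; shortcut pairs are an example.

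More seriously, laziness is precisely what stops a trailing \emph{main} syllable from travelling left, and your Case~(ii) only invokes it against label letters. Here is a concrete instance, with $p=q=1$ and the cyclic labelling of \cref{sec:building_qie}. Take $v_x,v_y$ with $f(v_x)=t_2t_{-1}t_{-2}\cdot w_0$ and $f(v_y)=t_2\cdot w_{-1}$; these are boundary-adjacent, and the latter is an early shortcut. The syllable-reduced, non-lazy word $s_x^as_y^b$ has $F$-image $t_2t_{-1}t_{-2}\,t_0^a\,t_{-1}^{b'}$ with $b'=\hat b-1\ne0$. In it the main syllable $t_{-1}^{b'}$ merges with the $t_{-1}$ inside $\ell_1$. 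This failure is not of the stated form, and nothing on your list of ingredients except laziness excludes it. So your claim that $\ell_{c+k+1}$ beginning with $\tau_1\tau_2$ is ``the only way to meet $\tau_1$ again'' is exactly the point left unproved.

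The missing idea is induction on $d$, so that you only analyse the interactions created by appending $\ell_dt_d^{b_d}$ to $\lambda_{d-1}$.
\begin{itemize}
\item If $\ell_d=1$, then $t_d$ must commute with $t_{d-1}$. Laziness, together with syllable-reducedness and the absence of triangles, then stops it from getting past $t_{d-2}$ or past a final letter of $\ell_{d-1}$.
\item If $\ell_d\ne1$, only the first letter $\tau$ of $\ell_d$ can move left. It must commute with $t_{d-1}$, so its vertex $\omega_\tau$ is adjacent to $\omega_{d-1}$, which is not a shortcut of $\beta_d$ by \cref{lem:stay_shortcut}.
\end{itemize}

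In the second case you then split according to whether $\omega_\tau\in\short\beta_d$.
\begin{itemize}
\item If it is, $\ell_d$ begins with $\tau$ followed by the type of its shortcut twin. Either $\tau$ is a final letter of $\ell_{d-1}$, which gives the configuration with $c=d-1$, $k=0$, or there is no obstruction.
\item If it is not, split further according to whether $\omega_{d-1}\in\short\beta_{d-1}$. Either a nontrivial $\ell_{d-1}$ blocks $\tau$, or $\ell_{d-1}=1$ and the same situation recurs with $d$ replaced by $d-1$.
\end{itemize}
This backward recursion is what produces the configurations with $k>0$ and trivial intermediate labels. Without it, your global pairwise classification has no way to control how far a letter can travel.
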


\begin{proof}
There is nothing to prove when $d=0$. Suppose we have proven the proposition for all $c<d$. 
By the inductive hypothesis, we only need to check for failures of $\lambda$ to be syllable-reduced that arise from appending $\ell_dt_{d}^{b_d}$ to $\lambda_{d-1}$. We do this by splitting into a number of cases. 


In short, we shall prove that if the addition of $\ell_dt_{d}^{b_d}$ to the end of $\lambda_{d-1}$ causes a new failure of $\lambda$ to be syllable-reduced, then it is because there is some $c<d$ such that $\beta_{c+2}$ is obtained from $\beta_{c+1}$ by gliding (or would be, if $c=d-1$), and $\beta_{c+r+1}$ is obtained from $\beta_{c+r}$ by doubling for all $r\ge2$. Note that whilst we show that this is necessary to cause an obstruction, it is not sufficient. If it does cause an obstruction, then the issue is with a final letter of $\ell_c$ and an initial letter of $\ell_d$, which in particular requires $\ell_d\ne1$.

In Case~\hyperlink{structure_case_1}{1} we will show that if $\ell_d=1$, then all obstructions to $\lambda$ being syllable-reduced come from $\lambda_{d-1}$. In Case~\hyperlink{structure_case_2}{2} we will assume that $\ell_d\neq 1$ and show that if there is an obstruction to $\lambda$ being syllable-reduced coming from $\ell_dt_{d}^{b_d}$, then it is caused by a first letter of $\ell_d$. We then further divide into cases by considering whether the vertex corresponding to that letter is a shortcut. We inductively reduce to the case where it is, and in that case show that any obstruction it causes has the form stated in the proposition.

To ease notation, for $c\le d$ let us write $\omega_c$ for the vertex $f(\gamma_{c-1}\cdot v_{c})=f(\gamma_c\cdot v_{c})$ of $\Lambda^\ext$. We also write $\beta_c$ for the block $f(\gamma_{c-1}\cdot \Gamma)$, so that $\omega_{c-1}$ and $\omega_c$ both lie in $\beta_c$.

\medskip\noindent\textbf{\linkdest{structure_case_1}{Case 1}. $\ell_d=1$. }
If $\ell_d=1$, then any failure of $\lambda$ to be syllable-reduced that arises from $\ell_dt_{d}^{b_d}$ requires that $t_{d}$ commutes with $t_{{d-1}}$. Since $\ell_d=1$, the vertices $\omega_d$ and $\omega_{d-1}$ lie in a common copy of $\Lambda$ inside the block $\beta_d$, so we must have $t_{d}\ne t_{{d-1}}$. Thus $\omega_d$ and $\omega_{d-1}$ are adjacent vertices of a copy of $\Lambda$ that makes up the block $\beta_{d-1}$. Note that this was independent of whether $\beta_d$ was obtained from $\beta_{d-1}$ by doubling or gliding.

If $\ell_{d-1}=1$ and $t_{d}^{b_d}$ obstructs $\lambda$ being syllable-reduced, then as well as commuting with $t_{{d-1}}$, it must be that $t_{d}$ commutes with $t_{{d-2}}$. Since $\ell_{d-1}=\ell_d=1$, the vertices $\omega_d$ and $\omega_{d-2}$ live in a common copy of $\Lambda$ inside $\beta_{d-1}$, so they cannot be equal as the word representing $g$ was syllable-reduced. If they are adjacent, then since $\Lambda$ has no triangles, we have that $\omega_d$ is adjacent to both $\omega_{d-1}$ and $\omega_{d-2}$ but the latter two are not adjacent, which is impossible because the word representing $g$ was lazy. 

Similarly, if $\ell_{d-1}\ne1$ and $t_{d}^{b_d}$ obstructs $\lambda$ being syllable-reduced, then $t_{d}$ must commute with a final letter of $\ell_{d-1}$, which is impossible because we started with a lazy representative of $g$. 

We have shown that if $\ell_d=1$ then all failures of $\lambda$ to be syllable-reduced come from $\lambda_{d-1}$.

\medskip\noindent\textbf{\linkdest{structure_case_2}{Case 2}. $\ell_d\ne1$. }
Now suppose that $\ell_d\ne1$. By the definition of $\ell_d$, it has a final letter that does not commute with $t_{d}$, and no final letter is equal to $t_{d}^{\pm1}$. Thus if $\ell_dt_{d}^{b_d}$ obstructs $\lambda$ being syllable-reduced, then this failure is caused by the first letter of $\ell_d$.

As shown by Lemma~\ref{lem:stay_shortcut}, the vertex $\omega_{d-1}$ is a non-shortcut boundary vertex of the block $\beta_d$. In particular, $t_{{d-1}}^{\pm1}$ cannot be the first letter of $\ell_d$. Thus if there is an obstruction to $\lambda$ being syllable-reduced from $\ell_d$, then exactly one possible first letter of $\ell_d$ must commute with $t_{{d-1}}$. Let us write $\tau$ for this letter, and $\omega_\tau$ for the vertex of $\beta_d$ that it corresponds to. Note that $\omega_{d-1}$ is adjacent to $\omega_\tau$, because $\tau$ and $t_{{d-1}}$ commute. 

We shall split into three cases, according to whether $\omega_\tau$ is a shortcut of $\beta_d$ and whether $\omega_{d-1}$ is a shortcut of $\beta_{d-1}$. We start with the assumption that $\omega_{d-1}$ is adjacent to $\omega_\tau$.

\medskip\noindent\textbf{\linkdest{structure_case_2a}{Case 2.a}. $\omega_\tau\in\short\beta_d$. }
Suppose that $\omega_\tau\in\short\beta_d$. In this case, the first two letters of $\ell_d$ are $\tau$ and the type of the shortcut twin $\omega'$ of $\omega_\tau$. Note that $\omega'$ is not adjacent to $\omega_{d-1}$, so its type does not commute with $t_{{d-1}}$. Since $\omega_{d-1}$ is adjacent to $\omega_\tau$ and is not a shortcut of $\beta_d$ by Lemma~\ref{lem:stay_shortcut}, it is a non-shortcut boundary vertex of $\beta_{d-1}$ adjacent to $\omega_\tau$.

If $\ell_{d-1}=1$, then $\omega_{d-2}$ lies in a common copy of $\Lambda$ with $\omega_{d-1}$ and $\omega_\tau$. By Lemma~\ref{lem:stay_shortcut}, it is not a shortcut of $\beta_{d-1}$. Moreover, it is not equal to $\omega_{d-1}$ because we started with a syllable-reduced representative of $g$. This shows that $\omega_{d-2}$ is not equal or adjacent to $\omega_\tau$, so $t_{{d-2}}$ does not commute with $\tau$. Hence $\tau$ does not obstruct $\lambda$ being syllable-reduced.

If $\ell_{d-1}\ne1$, then either $\tau^{\pm1}$ is a final letter of it or it is not. If it is not, then it does not commute with any final letter of $\ell_{d-1}$, and therefore cannot obstruct $\lambda$ being syllable-reduced. If it is a final letter of $\ell_{d-1}$, then $\tau$ does obstruct $\lambda$ being syllable-reduced, and it does so exactly as described in the proposition, with $c=d-1$, $k=0$.


\medskip\noindent\textbf{\linkdest{structure_case_2b}{Case 2.b}. $\omega_\tau\not\in\short\beta_d$ and $\omega_{d-1}\in\short\beta_{d-1}$. }
Suppose instead that $\omega_\tau\not\in\short\beta_d$, and assume that $\omega_{d-1}\in\short\beta_{d-1}$. This means that $\beta_d$ was obtained from $\beta_{d-1}$ by gliding along $\omega_{d-1}$, and $\omega_\tau$ is the shortcut twin of $\omega_{d-1}$ in $\beta_{d-1}$. Consequently, if $\ell_{d-1}\ne1$, then its final letters cannot commute with $\tau$, so $\tau$ does not obstruct $\lambda$ being syllable-reduced.

If $\ell_{d-1}=1$, then $\omega_{d-2}$ lies in a common copy of $\Lambda$ with both $\omega_{d-1}$ and $\omega_\tau$. By Lemma~\ref{lem:stay_shortcut}, it is not a shortcut of $\beta_{d-1}$, so it is not equal to $\omega_\tau$. Thus if $\tau$ obstructs $\lambda$ being syllable-reduced, then $\omega_{d-2}$ must be adjacent to $\omega_\tau$ in $\beta_{d-1}$, and $\omega_\tau\in\short\beta_{d-1}$. We are reduced to the situation of Case~\hyperlink{structure_case_2a}{2.a}, but with $d$ replaced by $d-1$.

\medskip\noindent\textbf{\linkdest{structure_case_2c}{Case 2.c}. $\omega_\tau\notin\short\beta_d$ and $\omega_{d-1}\not\in\short\beta_{d-1}$. }
Finally, suppose that $\omega_\tau\not\in\short\beta_d$ and also $\omega_{d-1}\not\in\short\beta_{d-1}$. In this case, $\beta_d$ was obtained from $\beta_{d-1}$ by doubling along $\omega_{d-1}$, so $\omega_\tau$ is a non-boundary vertex of $\beta_{d-1}$. Hence $\omega_{d-1}$ lies at the intersection of two copies of $\Lambda$ that make up $\beta_{d-1}$, because it is adjacent to $\omega_\tau$.

If $\ell_{d-1}\ne1$, then its final letter cannot commute with $\tau$, so $\tau$ cannot obstruct $\lambda$ being syllable-reduced.

If $\ell_{d-1}=1$, then again $\omega_{d-2}$ lies in a common copy of $\Lambda$ with $\omega_{d-1}$ and $\omega_\tau$. Thus if $\tau$ obstructs $\lambda$ being syllable-reduced, then $\omega_{d-2}$ must be adjacent to $\omega_\tau$, which is not a shortcut of $\beta_{d-1}$. By considering whether or not $\omega_{d-2}$ is a shortcut of $\beta_{d-2}$, we are reduced to either case~\hyperlink{structure_case_2b}{2.b} or Case~\hyperlink{structure_case_2c}{2.c}, but with $d$ replaced by $d-1$.

\medskip

We have proved that any obstruction coming from $\ell_dt_{d}^{b_d}$ has the form given in the statement of the proposition. Moreover, this form makes it impossible for such an obstruction to interfere with the failures for $\lambda_{d-1}$ to be syllable-reduced that come from the inductive hypothesis. This proves the first statement. The statement about $\lambda'$ is an immediate consequence. This completes the proof.
\end{proof}

To clarify between $A_\Gamma$ and $A_\Lambda$, write $|\lambda|_{T^*}$ for the length of a word $\lambda$ in the alphabet $T$, and write $|g|_{A_\Gamma}$ for the word-length of an element $g\in A_\Gamma$. We shall use the term  \emph{$A_\Gamma$-prefix} to clarify that we are taking a prefix of an element $g\in A_\Gamma$. Combining Item~\ref{sh:rep_F_change} with the result of Proposition~\ref{prop:roots_structure} allows us to find a pair of closely-related representatives for the elements $F(g)$ and $F(h)$ of $A_\Lambda$. 

\begin{lemma} \label{lem:prefixes}
Let $g\in A_\Gamma$, and let $h\in A_\Gamma$ be an $A_\Gamma$-prefix of $g$. There is a word $\mu$ representing $F(g)$ such that:
\begin{itemize}
\item   the length $|\mu|_{T^*}$ is at most $|F(g)|_{A_\Lambda}+2$;
\item   $\mu$ has an initial subword $\nu$ that minimally represents $F(h)$;
\item   if $|\mu|_{T^*}>|F(g)|_{A_\Lambda}$, then $h$ has a unique final letter, and the first letter of $\mu$ after $\nu$ is the inverse of the last letter of $\nu$;
\end{itemize}
\end{lemma}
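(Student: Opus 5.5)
The plan is to choose a good syllable-reduced word for $g$ that passes through $h$, and then read off $F(g)$ and $F(h)$ from Proposition~\ref{prop:roots_structure}. Since $h$ is an $A_\Gamma$-prefix of $g$, Proposition~\ref{prop:shuffling} lets us write $g=h\cdot(h^{-1}g)$ with lengths adding. A syllable-reduced word for $h$ followed by one for $h^{-1}g$ is therefore a representative of $g$ whose syllable count may exceed $\syl g$ by at most one, coming from a merge of the last syllable of $h$ with the first syllable of $h^{-1}g$.

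First I reduce to the case where $h$ is a union of whole syllables. If the boundary between $h$ and $h^{-1}g$ splits a syllable $s^a=s^{a'}s^{a''}$, I compare $F$ of the two words directly, using the recursion in Item~\ref{sh:roots}. Splitting the exponent changes the image only in the exponent of the corresponding letter $t$, and by at most one, because of the $\hat a$ versus $a$ discrepancy in glides. This is exactly the source of the possible extra $2$ in the length bound, together with an inverse pair $t^{\pm1}t^{\mp1}$ appearing right after $\nu$. In this situation $h$ ends in a power of $s$ which is not shuffleable, so $h$ has a unique final letter, namely $s^{\pm1}$.

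Next, by Lemma~\ref{lem:lazy_exists} I choose a lazy representative $\gamma_h$ of $h$. I then extend it to a lazy representative $\gamma=\gamma_h\gamma'$ of $g$ with $\gamma_h$ as an initial segment. To do this I rerun the inductive argument of Lemma~\ref{lem:lazy_exists}, only shuffling syllables within the suffix and never across the boundary. The point to check is that the laziness conditions are local, involving consecutive commuting syllables, so the only problematic position is the junction. There a failure would force three pairwise commuting generators, as in the proof of that lemma, and $C_m$ has no triangles. Then $F(\gamma_h)$ is an initial subword of $F(\gamma)$, and Proposition~\ref{prop:roots_structure} applies to both. Applying the reduction procedure to $F(\gamma)$ gives the syllable-reduced word $\lambda'$. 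The reductions occurring inside $F(\gamma_h)$ are exactly those for $\lambda'_h$, since they are determined by pairs $(c,k)$ with $c+k+1\le d_h$. Thus $\lambda'$ has an initial subword $\nu$ that is the reduced form of $F(h)$, except for pairs $(c,k)$ straddling the boundary, where $c\le d_h<c+k+1$.

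The main obstacle is controlling these straddling pairs. There is at most one such pair, because the labels $\ell_{c+1},\dots,\ell_{c+k}$ are trivial. For it, the last letter $\tau_1^{\epsilon_c}$ of $\ell_c$ has been shuffled past $t_c,\dots,t_{c+k}$ and cancelled against the first letter of $\ell_{c+k+1}$. To keep $\nu$ as an initial subword, I undo this single shuffle-and-cancellation: I keep $\tau_1^{\epsilon_c}$ at the end of the $F(h)$ part and reinsert $\tau_1^{\epsilon_{c+k+1}}$ after it. This produces $\mu$ with $|\mu|_{T^*}\le|F(g)|+2$, and the first letter of $\mu$ after $\nu$ is the inverse of the last letter of $\nu$. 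It remains to show that in this case $h$ has a unique final letter. The shuffled letter must be forced to the end of $\nu$, with $t_c,\dots,t_{c+k}$ all commuting with $\tau_1$. I expect to argue that this forces the final syllables $s_c,\dots,s_{d_h}$ of $\gamma_h$ to be a single non-commuting tail, using laziness and the fact that consecutive commuting syllables in a cycle alternate between the two neighbours of a vertex. This last deduction is the step I am least sure of, and I would check it against the case analysis of Proposition~\ref{prop:roots_structure}.
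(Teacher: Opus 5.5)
\textbf{There is a genuine gap.} Your third paragraph claims that a lazy representative $\gamma_h$ of $h$ can always be extended to a lazy representative $\gamma_h\gamma'$ of $g$. This is false.

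Laziness fixes the order of any two consecutive commuting syllables, and that order can force a syllable of $h^{-1}g$ to come before a syllable of $h$. Use the cyclic labelling of Section~\ref{sec:building_qie}:
\begin{itemize}
\item $f(v_1)=w_1$ has minimal relative label $1$;
\item $f(v_2)=t_2\cdot w_0$ has minimal relative label $t_2$.
\end{itemize}
Since $s_1$ and $s_2$ commute, the first laziness condition at $c=0$ shows that $s_1^as_2^b$ is the only lazy representative of $g=s_1^as_2^b$. So the prefix $h=s_2^b$ is not an initial segment of any lazy representative of $g$.

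The triangle-free argument of \cref{lem:lazy_exists} does not rescue this. There, a violation between the last two syllables is repaired by swapping them, which is exactly the crossing of the boundary that you forbid. Triangle-freeness only excludes a further violation created by that swap. Without laziness of the whole word you cannot invoke \cref{prop:roots_structure}, so you lose control of how $F(\gamma_h\gamma')$ fails to be reduced.

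\textbf{The missing idea.} The paper's proof is built around this obstruction:
\begin{itemize}
\item It fixes a lazy representative $\gamma$ of $g$.
\item It obtains $h$ from $\gamma_{e+r}$ by deleting $r$ \emph{excess} syllables, each of which commutes with every later non-excess syllable.
\item Using Item~\ref{sh:rep_F_change}, each excess image $t_x^{b_x}$ commutes with $\ell_yt_y^{b_y}$ for every later non-excess $y$.
\item Shuffling these images to the right gives a representative of $F(g)$ with an initial subword representing $F(h)$.
\item Only then is the (at most one) straddling pair $(c_0,k_0)$ handled, essentially as in your last paragraph.
\end{itemize}

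\textbf{Further corrections.}
\begin{itemize}
\item Splitting a syllable $s^a=s^{a'}s^{a''}$ is not a source of the extra $2$. The correction $\delta=b-a$ in Item~\ref{sh:roots} depends on the shortcut and depth data of the vertex being extended along, not on the exponent. It is either $0$ or of the same sign as $a$, so $t^{a'+\delta}$ is a prefix of $t^{a+\delta}$ and no inverse pair appears. If such a pair did appear, combining it with the straddling pair would give $+4$.
\item $h$ can end in two commuting, partially used syllables. So you would need to handle two splits, and concatenating reduced words for $h$ and $h^{-1}g$ can give two more syllables than $\syl{g}$, not one.
\item For the step you were unsure of, the paper argues as follows. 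All of $t_{c_0},\dots,t_{c_0+k_0}$ commute with $\tau_1$ but not with its shortcut twin $\tau_2$. Hence two of them commute only if they are equal, which forces them to be non-consecutive. From this the paper deduces that $h$ has a unique final letter.
\end{itemize}
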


\begin{proof}
The idea of the proof is as follows. If we take a lazy representative $\gamma$ of $g$, which exists by Lemma~\ref{lem:lazy_exists}, then Proposition~\ref{prop:roots_structure} gives a good representative $\lambda$ of $F(g)$. We would like to say that $h$ is represented by an initial substring of $\gamma$, which would necessarily be a lazy word, for then we would get an initial substring of $\lambda$ representing $F(h)$. However this need not be the case. In general, a representative of $h$ can be found in an initial substring of $\gamma$, but ``padded out'' by some additional syllables. These excess syllables have to commute with the ones making up $h$ that appear after them, because $h$ is a prefix of $g$. Intuitively, we prove the lemma by shuffling the ``images of the excess syllables'' in $\lambda$ past the ones coming from $h$.

Using Lemma~\ref{lem:lazy_exists}, fix a lazy representative $\gamma=s_{1}^{a_1}\dots s_{d}^{a_d}$ of $g$ and let $F(\gamma)=\lambda=\ell_1t_{1}^{b_1}\dots\ell_dt_{d}^{b_d}$. Recall from Item~\ref{sh:roots} that  $b_{k}$ differs from $a_{k}$ by at most one, for every $k$. The letters making up $\lambda$ come in two types: those that appear in some $\ell_c$, and those that appear in some $t_{c}^{b_c}$. We call these \emph{type-$\L$} and \emph{type-$T$} letters, respectively.

Let $e=\syl h$. As discussed above, whilst $h$ may not be represented by any initial substring of $\gamma$, there is necessarily some $r\ge0$ such that $h$ can be represented by a word obtained from $\gamma_{e+r}$, the first $e+r$ syllables of $\gamma$, by deleting $r$ syllables $s_{x}^{a_x}$ other than the final one (and possibly shrinking the powers of the final two non-deleted syllables). We call such values of $x$ and the corresponding syllables \emph{excess}. The fact that $h$ is an $A_\Gamma$-prefix of $g$ implies that every excess syllable must commute with every non-excess syllable that appears after it in $\gamma_{e+r}$. 
By the discussion in Item~\ref{sh:rep_F_change}, the excess type-$T$ letters $t_{x}$ commute with $\ell_yt_{y}^{b_y}$ for every non-excess $y\in\{x+1,\dots,e+r\}$. Let $\eta$ be the word representing $F(g)$ obtained by shuffling every excess $t_{x}^{b_x}$ to appear after $\ell_yt_{y}^{b_y}$ for every non-excess $y\in\{x+1,\dots,e+r\}$.
It has an initial subword that represents $F(h)$.

Note that $|\eta|_{T^*}=|\lambda|_{T^*}$. If this length is equal to $|F(g)|_{A_\Lambda}$, then we can set $\mu=\eta$ and we are done: the initial subword representing $F(h)$ is necessarily minimal because it is contained in a word with no possible cancellation.

Otherwise, since $\gamma$ is lazy, Proposition~\ref{prop:roots_structure} tells us exactly how $\lambda$ fails to be syllable-reduced. In particular, there is at least one pair $(c,k)$ as in the statement of Proposition~\ref{prop:roots_structure} causing $\lambda$ to fail to be syllable-reduced. If $(c_1,k_1)$ and $(c_2,k_2)$ are two such pairs, then, after relabelling, the properties defining them force $c_1+k_1<c_2$. Thus there can be at most one pair $(c_0,k_0)$ for which $e+r\in\{c_0,\dots,c_0+k_0\}$. 
Note that since $t_{c},\dots,t_{{c+k}}$ all commute with exactly one of the last two letters of $\ell_c$, which form a shortcut pair, two of these $t_{{c+x}}$ can commute only if they are equal, which requires them to be non-consecutive because we started with a syllable-reduced word representing $g$. In particular, if there is such a pair, then $h$ has a unique final letter and $t_{{e+r}}^{\pm1}$ is the unique final type-$T$ letter of $F(h)$.

Let $\lambda'$ be the syllable-reduced word output by Proposition~\ref{prop:roots_structure}, which is obtained from $\lambda$ by shuffling and cancelling type-$\L$ letters coming from pairs $(c,k)$ as in that proposition. 
Let $\lambda''$ be the word obtained from $\lambda$ by doing all of these type-$\L$ shuffles and cancellations except for the one corresponding to the pair $(c_0,k_0)$, if it exists. Note that $|\lambda''|_{T^*}=|\lambda'|_{T^*}$ or $|\lambda'|_{T^*}+2$. According to Proposition~\ref{prop:roots_structure}, the length of $\lambda''$ is either $|F(g)|_{A_\Lambda}$ or $|F(g)|_{A_\Lambda}+2$.

In the construction of the word $\eta$ from $\lambda$ given above, we shuffled excess type-$T$ letters to appear after some non-excess $\ell_yt_{y}^{b_y}$. Since $\lambda''$ was obtained from $\lambda$ by deleting some type-$\L$ letters, we can define $\eta''$ to be the word obtained from $\lambda''$ by shuffling type-$T$ letters in $\lambda''$ in exactly the same way. We have $|\eta''|_{T^*}=|\lambda''|_{T^*}$, and $\eta''$ represents $F(g)$.

Let the final syllable of $h$ be $s_{{e+r}}^{a'_{e+r}}$, which is a prefix of $s_{{e+r}}^{a_{e+r}}$ with $a'_{e+r}\ne0$. Let $\delta\in\{-1,0,1\}$ be such that $b_{e+r}=a_{e+r}+\delta$. Set $b'_{e+r}=a'_{e+r}+\delta$. 

If $|\eta''|_{T^*}=|F(g)|_{A_\Lambda}$ then set $\mu=\eta''$. In this case, we define $\nu$ to be the initial subword of $\mu$ that ends with $t_{{e+r}}^{b'_{e+r}}$.

Otherwise, the way that $\eta''$ fails to have minimal word-length is that the label $\ell_{c_0}$ ends with letters $\tau_1^\epsilon\tau_2^\epsilon$, where $\epsilon\in\{\pm1\}$, and $\ell_{c_0+k_0+1}$ begins with the letters $\tau_1^{-\epsilon}\tau_2^{-\epsilon}$, and there is extra information given by Proposition~\ref{prop:roots_structure} (in particular, $\tau_1$ commutes with the intervening letters $t_{c_0},\dots,t_{{c_0+k_0}}$ but $\tau_2$ does not). 

This time, we first define $\nu$ from $\eta''$ and then use it to define $\mu$. Starting from $\eta''$, shuffle the letter $\tau_1^\epsilon$ coming from $\ell_{c_0}$ to appear after every (necessarily non-excess) $t_{y}^{b_y}$ between it and $t_{{e+r}}^{b_{e+r}}$. Then shuffle $\tau_1^\epsilon$ one more time, to appear after $t_{{e+r}}^{b'_{e+r}}$. Let $\nu$ be the initial subword of the word obtained in this way that ends with this $\tau_1^\eps$. 

We perform one more shuffle to produce $\mu$. Note that every excess $t_{x}$ commutes with $\tau_1$. Indeed, if $x\in\{c_0,\dots,c_0+k_0\}$, then this is directly given by Proposition~\ref{prop:roots_structure}, whereas if $x<c$ then it is because $t_{x}$ commutes with $\ell_c$, as given by Item~\ref{sh:rep_F_change}. To complete the definition of $\mu$, shuffle the letter $\tau_1^{-\epsilon}$ coming from $\ell_{c_0+k_0+1}$ to the left until it appears immediately after the last letter of $\nu$. This only involves shuffling it past syllables that are either from the set $\{t_{{c_0}},\dots,t_{{c_0+k_0}}\}$ or are excess, and we just showed that $\tau_1$ commutes with all such syllables. 

The word $\mu$ has a single failure to be reduced as a word: the final letter of its initial subword $\nu$ is $\tau_1^\epsilon$, which cancels with the very next letter of $\mu$. 
By the commutation properties of $\tau_1$ established above, the word $\mu$ was obtained from $\eta''$ by shuffling commuting elements and not cancelling anything, so $\mu$ represents $F(g)$ and has the same length as $\eta''$. 

It remains to show that $\nu$ represents $F(h)$, and does so minimally. As noted earlier, we know that $\eta$ has an initial subword representing $F(h)$. We can think of $\eta''$ as being obtained from $\eta$ by making some cancellations of pairs of type-$\L$ letters. Since we only shuffled some excess type-$T$ letters to go from $\lambda$ to $\eta$, the construction of $\eta''$ ensures that for each type-$\L$ pair that was cancelled, either both letters appear before $t_{{e+r}}$ inside $\eta$, or both appear after $t_{{e+r}}$ inside $\eta$. It follows that $\eta''$ also has an initial subword $\nu''$ that represents $F(h)$, namely the initial subword that ends with $t_{{e+r}}^{b'_{e+r}}$.

If $\mu=\eta''$, then $\nu''$ is equal to $\nu$, and $\nu$ is minimal because it is a subword of the word $\mu$ which has no possible cancellations. Otherwise, $\nu$ is obtained from $\nu''$ by shuffling $\tau_1^\epsilon$ to the end, and $\tau_1^\epsilon$ commutes with all letters it is shuffled past. Thus $\nu$ represents $F(h)$. By construction, it ends with only one of the only two letters of $\mu$ that are cancellable, so we have $|F(h)|_{A_\Lambda}=|\nu|_{T^*}$ as desired.
\end{proof}

We are now in a position to prove Proposition~\ref{prop:building_cycles_qie}, the main result of this section.

\begin{proposition} \label{prop:building_cycles_qie}
Let $n>6$, and let $m=n+p(n-4)+q(n-2)$ for some $p\ge1$, $q\ge0$. The map $F:A_{C_m}\to A_{C_n}$ constructed above is a quasiisometric embedding.
\end{proposition}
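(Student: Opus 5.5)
We prove two inequalities: an upper bound $|F(g)^{-1}F(g')|\le K|g^{-1}g'|$ (Lipschitz) and a lower bound $|F(g)^{-1}F(g')|\ge \frac1K|g^{-1}g'|-K$ (coarse colipschitz). Neither reduces to a statement about $|F(g)|$ alone, because $F$ is not a homomorphism.

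\emph{Equivariance.} We first record that $F$ is equivariant up to a bounded correction. By construction of $f$ in Item~\ref{sh:f_construction}, the block $f(gx\cdot\Gamma)$ depends only on the block $f(g\cdot\Gamma)$ and on $x$. The marked block $f(g\cdot\Gamma)$ is determined by the parameters $(\alpha,\eps,\kappa;F(g)\ell_0)$, where $\ell_0$ ranges over the finitely many basic relative labels. Moreover, doubling and gliding commute with left translation by $A_\Lambda$.

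There are only finitely many ``shapes'' $(\alpha,\eps,\kappa,\ell_0)$, together with the data of which copy of $\Lambda$ in the block carries the base point. So $F(gx)=F(g)\,\Phi_\theta(x)$, where $\Phi_\theta$ is the map obtained by running the same construction starting from a block of shape $\theta$ instead of $B(0,+,0;1)$. Each $\Phi_\theta$ is handled exactly as $F$ is: the lazy-word analysis, Proposition~\ref{prop:roots_structure} and Lemma~\ref{lem:prefixes} go through verbatim, since the base block played no special role there. This reduces both inequalities to bounds on $|\Phi_\theta(x)|$ in terms of $|x|$, uniformly over the finitely many $\theta$.

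\emph{Lipschitz.} Write $x$ as a syllable-reduced word $s_1^{a_1}\cdots s_d^{a_d}$. Item~\ref{sh:roots} gives a representative $\ell_1t_1^{b_1}\cdots\ell_dt_d^{b_d}$ of $\Phi_\theta(x)$, with $|b_c|\le|a_c|+1$ and each $|\ell_c|\le L$, where $L$ bounds the lengths of basic relative labels. Since $d\le|x|$, this gives $|\Phi_\theta(x)|\le (L+2)|x|$.

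\emph{Colipschitz.} We show $|\Phi_\theta(x)|\ge \frac1K|x|-K$. Take a lazy representative $\gamma$ of $x$ (Lemma~\ref{lem:lazy_exists}) and let $\lambda'$ be the syllable-reduced word from Proposition~\ref{prop:roots_structure}. The aim is to show that every syllable $s_c^{a_c}$ of $\gamma$ leaves a surviving syllable $t_c^{b_c}$ in $\lambda'$ of comparable size.

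\begin{itemize}
\item The only modifications in Proposition~\ref{prop:roots_structure} shuffle and cancel type-$\L$ letters. So all type-$T$ syllables $t_c^{b_c}$ survive with exponent $b_c$.
\item Two such syllables could still merge into one syllable of $\lambda'$. This is ruled out because $\lambda'$ is syllable-reduced, so its letter count is exactly $\sum_c|b_c|$ plus the surviving type-$\L$ letters.
\item Since $b_c\in\{a_c-1,a_c,a_c+1\}$, we have $|b_c|\ge|a_c|-1$. Then $|\Phi_\theta(x)|=|\lambda'|\ge\sum_c|b_c|\ge |x|-d$.
\end{itemize}

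This bound is too weak when many exponents $a_c=\pm1$ give $b_c=0$. That happens exactly for a negative glide followed by the ``$\hat a-1$'' rule, i.e.\ $a_c=-1$ with $\hat a_c-1=-2$. In fact $b_c=0$ forces $a_c=1$ under the ``$\hat a_c-1$'' rule, in which case $\hat a_c-1=1$. So checking the three cases of Item~\ref{sh:roots} directly shows $b_c\ne0$ always. Hence $|b_c|\ge\max(1,|a_c|-1)\ge|a_c|/2$, and $|\Phi_\theta(x)|\ge|x|/2$.

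\emph{Main obstacle.} The hardest step is justifying that merging of type-$T$ syllables cannot occur and that cancellations never consume type-$T$ letters. If the black-box use of Proposition~\ref{prop:roots_structure} plus equivariance is insufficient, the fallback is Lemma~\ref{lem:prefixes}. It presumably exists so that $\dist(F(g),F(h))$ can be computed directly for prefixes $h$ of $g$ via $\mu=\nu\cdot(\text{rest})$, up to an additive error of $2$.

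For general $g,g'$, pass through a common prefix $h$ of $g$ and $g'$ lying on a geodesic between them. Lemma~\ref{lem:prefixes} then makes the paths from $F(h)$ to $F(g)$ and to $F(g')$ nearly geodesic. Combining this with the lower bound $|b_c|\ge|a_c|/2$ for the non-common parts should give the required inequality.
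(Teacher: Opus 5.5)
\textbf{Verdict: there is a genuine gap.} The Lipschitz half and the prefix case are fine. The coarse colipschitz bound for general pairs is not proved, because the reduction rests on a false identity and the fallback omits the divergence step.

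\textbf{The equivariance step is where the argument breaks.} The identity $F(gx)=F(g)\Phi_\theta(x)$ holds when $g$ is an $A_\Gamma$-prefix of $gx$. It fails when syllable-reduced words for $g$ and $x$ cancel, because running the construction backwards does not undo a glide.

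\emph{Example.} Let $q>0$, let $f(v)=\ell\cdot w$ be a shortcut of $f(1\cdot\Gamma)$ with $\ell\in\L$ minimal, and let $s,t$ be the generators corresponding to $v,w$. Item~\ref{sh:roots} gives $F(s^2)=\ell t^3$ and $F(s^{-1})=\ell t^{-1}$. By Lemma~\ref{lem:stay_shortcut}, $f(v)$ is no longer a shortcut of $f(s^2\cdot\Gamma)$. So running the construction along $s^{-3}$ from that block performs a double, giving $\Phi_\theta(s^{-3})=t^{-3}$ and $F(s^2)\Phi_\theta(s^{-3})=\ell\neq F(s^{-1})$.

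\emph{Consequence.} The block reached is $\ell t\ell^{-1}\cdot f(s^{-1}\cdot\Gamma)$ rather than $f(s^{-1}\cdot\Gamma)$. Take $g=s^2y$ and $g'=s^{-1}y$, with $y$ a long word that only triggers doubles. Then $F(g)\Phi_\theta(g^{-1}g')$ stays near $\ell t\ell^{-1}F(g')$, which is arbitrarily far from $F(g')$ for suitable $y$. Thus $\dist(F(g),F(g'))$ is not $|\Phi_\theta(g^{-1}g')|$. Your bounds on $|\Phi_\theta(x)|$ only control $\dist(F(g_0),F(g_0x))$ when $g_0$ is a prefix of $g_0x$.

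That is enough for the Lipschitz bound: apply it to one-letter prefix extensions along a geodesic, as the paper does. Your lower bound also correctly gives $|F(g)|\ge|g|$. (The three cases of Item~\ref{sh:roots} give $|b_c|\ge|a_c|$ directly, so the detour through $b_c\neq0$ is unnecessary.) But none of this says anything about general pairs.

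\textbf{The missing idea is the divergence step at the maximal common prefix.} Your fallback picks the right base point $g_0$. However, Lemma~\ref{lem:prefixes} only makes the paths $F(g_0)\to F(g_1)$ and $F(g_0)\to F(g_2)$ nearly geodesic. That does not stop them from sharing a long initial segment, in which case $\dist(F(g_1),F(g_2))$ would be much smaller than $\dist(F(g_0),F(g_1))+\dist(F(g_0),F(g_2))$.

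The paper bounds this overlap as follows.
\begin{itemize}
\item Lemma~\ref{lem:prefixes} makes $P_0$ (either $F(g_0)$ or $F(g_0)$ minus its last letter) a common prefix of $F(g_1)$ and $F(g_2)$.
\item Suppose some syllable $t_j^b$ made $P_0t_j^b$ a common prefix. Then it could be shuffled to sit right after $\nu$ in both representatives. The label comparison of Item~\ref{sh:rep_F_change} would then produce a syllable extending $g_0$ to a common prefix of $g_1$ and $g_2$, contradicting maximality of $g_0$.
\item Hence the maximal common prefix $P$ of $F(g_1)$ and $F(g_2)$ is at most $P_0\ell$ with $\ell\in\L$, i.e.\ at most $p+2q$ letters beyond $P_0$.
\item Therefore $\dist(F(g_1),F(g_2))=\dist(F(g_1),P)+\dist(P,F(g_2))\ge\dist(g_1,g_2)-2(p+2q)$.
\end{itemize}
Your proposal has no argument of this kind. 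This step is exactly where the maximality of $g_0$ must be used.
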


\begin{proof}

First we show that $F$ is Lipschitz. By the triangle inequality, it suffices to upper-bound $\dist(F(g),F(gs))$, where $g\in A_\Gamma$ and $s$ is a generator or an inverse of a generator such that $|gs|>|g|$. By the definition of $F$, if $s$ is equal to a last letter of $g$, then $\dist(F(g),F(gs))=1$. Otherwise we can apply Proposition~\ref{prop:roots_structure} to see that $\dist(F(g),F(gs))\le\max\{|\ell|\,:\,\ell\in\L\}+2$. Since all elements of $\L$ have length at most $p+2q$, this proves that $F$ is $(p+2q+2)$--Lipschitz.

It remains to show that $F$ is coarsely colipschitz. Let $g_1,g_2\in A_\Gamma$, and let $g_0\in A_\Gamma$ be their maximal common $A_\Gamma$-prefix. According to Lemma~\ref{lem:prefixes}, the elements $F(g_1)$ and $F(g_2)$ have a common $A_\Lambda$-prefix $P_0$ that is either equal to $F(g_0)$ or can be obtained from $F(g_0)$ by deleting a final letter. Let $P$ be the maximal common $A_\Lambda$-prefix of $F(g_1)$ and $F(g_2)$, which may be longer than $P_0$. We aim to show that $P$ is not much longer than $P_0$. 

For $k\in\{1,2\}$, let $\mu_k$ and $\nu_k$ be the representatives of $F(g_k)$ and $F(g_0)$ given by applying Lemma~\ref{lem:prefixes} to $g_k$ and its prefix $g_0$. Let $\bar\mu_k$ be the minimal representative for $F(g_k)$ that is obtained from $\mu_k$ by doing at most one cancellation, and let $\bar\nu_k$ be obtained from $\nu_k$ by deleting its last letter if $\bar\mu_k\ne\mu_k$. In Item~\ref{sh:rep_F_change} we saw how to compare the terms coming from $\L$ in representatives of $F(g_k)$ obtained by applying the definition of $F$ to different representatives of $g_k$. That discussion in particular shows that there cannot be any syllable $t_j^b$ such that $P_0t_j^b$ is a common $A_\Lambda$-prefix of $F(g_1)$ and $F(g_2)$. Indeed, we would be able to shuffle such a syllable to appear immediately after the syllable of $\bar\mu_k$ with which $\bar\nu$ ends, and then the discussion of Item~\ref{sh:rep_F_change} would tell us that $g_1$ and $g_2$ have a common $A_\Gamma$-prefix consisting of $g_0$ followed by another syllable, contradicting maximality of $g_0$. It follows that $P$ either has the form $P_0\ell$ for some $\ell\in\L$, or can be obtained from such an expression by deleting a final letter. In particular, it is at most $p+2q$ letters longer than $P_0$.

Now we use Lemma~\ref{lem:prefixes} to lower-bound $\dist(F(g_k),F(g_0))$. Let $\mu'_k$ be the terminal substring obtained from $\mu_k$ by deleting $\nu_k$. Since $\nu_k$ is a representative of $F(g_0)$ and $\mu_k$ is a representative of $F(g_k)$, it follows that $\mu'_k$ is a representative of $F(g_0)^{-1}F(g_k)$. By construction, at most one letter that is cancellable in $\mu_k$ appears in $\mu'_k$, and it follows that $\mu'_k$ is a minimal representative of $F(g_0)^{-1}F(g_k)$. In other words, $\dist(F(g_0),F(g_k))=|\mu'_k|_{T^*}$. We therefore wish to lower-bound the length of the word $\mu'_k$. To do so, we examine the construction of $\mu_k$.

To produce $\mu_k$, we let $\gamma_k$ be a lazy representative of $g_k$, which gives rise to a representative $\lambda_k$ of $F(g_k)$ as in Item~\ref{sh:roots}. The construction of $\mu_k$ in the proof of Lemma~\ref{lem:prefixes} starts with $\lambda_k$, does all or all-but-one of the cancellations given by Proposition~\ref{prop:roots_structure}, and then shuffles all \emph{excess} syllables of \emph{type-$T$} past the last letter of $\nu_k$ inside $\lambda_k$. In particular, for each syllable $s^a$ that appears in $g_k$ but does not form part of $g_0$, there is a corresponding syllable $t^b$ that appears in $\mu_k$ after the end of $\nu_k$, where $b\in\{a,a+1\}$ if $a>0$ and $b\in\{a-1,a\}$ if $a<0$. This shows that $\mu'_k$ has length at least $|g_0^{-1}g_k|$.

Combining these estimates, we compute
\begin{align*}
\dist(F(g_1),F(g_2)) \,&=\, \dist(F(g_1),P) + \dist(P,F(g_2)) \\
    &\ge\,  \dist(F(g_1),F(g_0)) + \dist(F(g_0),F(g_2)) -2(p+2q) \\
    &\ge\,  \dist(g_1,g_0) + \dist(g_0,g_2) -2p-4q \\
    &=\, \dist(g_1,g_2)-2p-4q.
\end{align*}
This completes the proof.
\end{proof}

\begin{remark}
The proof of Proposition~\ref{prop:building_cycles_qie} showed the slightly stronger fact that $F$ is coarsely colipschitz with multiplicative constant 1.
\end{remark}

As noted previously, the map $F:A_\Gamma\to A_\Lambda$ induces the map $f:\Gamma^\ext\to\Lambda^\ext$, so one consequence of Proposition~\ref{prop:building_cycles_qie} is that $f$ is injective.

In the case where $A_{C_m}$ is not a subgroup of $A_{C_n}$, it is clear that $F$ cannot be at finite distance from any homomorphism. However, even when it does appear as a subgroup, the above construction can yield a quasiisometric embedding that is not close to a homomorphism.

\begin{lemma}\label{lem:no_homo}
If $q>0$, then the quasiisometric embedding $F$ is not at finite distance from a homomorphism.
\end{lemma}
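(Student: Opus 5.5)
Argue by contradiction: suppose $\phi:A_\Gamma\to A_\Lambda$ is a homomorphism with $\sup_g\dist(F(g),\phi(g))=D<\infty$. The first step is to pin down $\phi$ on the standard generators. For each generator $s$ of $A_\Gamma$ and $a\in\Z$, the definition of $F$ in Item~\ref{sh:roots} gives $F(s^a)=\ell t^{b}$ with $\ell\in\L$ fixed and $b\in\{a-1,a,a+1\}$. Since $\phi(s^a)=\phi(s)^a$ stays within $D$ of this, $\phi(s)$ must be an element whose powers track the standard geodesic $\{\ell t^b\}$. Comparing growth rates, $\phi(s)$ is conjugate to $t^{\pm1}$; more precisely, $\phi(s)=\ell' t^{\pm1}\ell'^{-1}$ for some bounded $\ell'$. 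So $\phi(s)$ is conjugate to the type of $f(1\cdot v)$, and in particular $\phi$ induces, on the level of standard geodesics, the same map on vertices $1\cdot\Gamma\to\Lambda^\ext$ as $f$. Here $f(1\cdot v)$ is identified with the conjugate $\ell t\ell^{-1}$, using the correspondence between vertices of $\Lambda^\ext$ and conjugates of generators.

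The key step is to exploit the shortcuts. Since $q>0$, the block $f(1\cdot\Gamma)=B(0,+,0;1)$ contains a shortcut pair $w,w'$. These are adjacent in $\Lambda^\ext$, but their preimages $v,v'$ are non-adjacent in $\Gamma=C_m$. By the previous step, $\phi(s_v)$ and $\phi(s_{v'})$ are conjugates of generators corresponding to the vertices $w,w'$. Adjacency in $\Lambda^\ext$ means exactly that these conjugates commute. Hence $\phi(s_vs_{v'}s_v^{-1}s_{v'}^{-1})=1$, so $\phi$ kills a nontrivial element, and by iterating it is far from injective. Concretely, $\phi((s_vs_{v'})^k)=\phi(s_v)^k\phi(s_{v'})^k$ and $\phi((s_vs_{v'}^{-1})^k)$ lie at bounded distance from each other relative to $k$. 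Alternatively, $\phi(s_v^ks_{v'}^k)=\phi(s_{v'}^ks_v^k)$, whereas $s_v^ks_{v'}^k$ and $s_{v'}^ks_v^k$ are at distance $4k$ in $A_\Gamma$. Since $F$ is a quasiisometric embedding (Proposition~\ref{prop:building_cycles_qie}), $F$ must separate them by at least about $4k/C-C$. But $\phi$ identifies them, so $F$ would have to move points by roughly $2k/C$, contradicting $D<\infty$.

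The main obstacle is the first step: rigorously deducing from bounded distance that $\phi(s)$ commutes with, or is conjugate to, the right generator. I would handle it cleanly by noting that $\{\phi(s)^a\}_a$ lies in the $D$-neighbourhood of the coset $\ell\langle t\rangle$. Hence $\phi(s)$ coarsely stabilises that coset, so $\phi(s)\in\ell\langle t\rangle\ell^{-1}$ up to the standard fact that an element whose orbit lies near a cyclic coset normalises it up to finite index. In a RAAG this forces $\phi(s)=\ell t^{c}\ell^{-1}$ with $c\ne0$. Actually only commutation is needed in the final step: $\phi(s_v)$ and $\phi(s_{v'})$ are powers of $\ell t\ell^{-1}$ and $\ell'' t'\ell''^{-1}$. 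These commute because the vertices $w,w'$ are adjacent in $\Lambda^\ext$, meaning they share a label $h$ with $t,t'$ commuting, and the cosets $h\langle t\rangle$, $h\langle t'\rangle$ match those at bounded distance.
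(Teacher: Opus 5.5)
Your proof is correct, and it takes a genuinely different route from the paper's. The paper cites a lemma from the authors' earlier rigidity paper: if $F$ were close to a homomorphism, then for each $v\in\Gamma$ all the vertices $f(g\cdot v)$, $g\in A_\Gamma$, would have one common type. It then exhibits a type change produced by a glide: $f(v_{2p-2})$ has type $t_0$, but $f(s_{2p+1}\cdot v_{2p-2})$ has type $t_{\pm2}$.

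You never leave the base block. You show directly that a homomorphism $\phi$ within $D$ of $F$ must send $s_v$ to a nonzero power of the conjugate $\ell t\ell^{-1}$ that labels $f(1\cdot v)$. The failure of $f(1\cdot\Gamma)$ to be induced then forces $\phi([s_v,s_{v'}])=1$, where $w,w'$ is a shortcut pair with non-adjacent preimages $v,v'$. This contradicts injectivity, which any homomorphism at bounded distance from a quasiisometric embedding of a torsion-free group must have.

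Your argument is self-contained and isolates the real obstruction: for $q>0$ the block is not an induced subgraph of $\Lambda^\ext$, in the spirit of Kim--Koberda. The paper's type-change argument buys more. It also rules out maps quasiconjugate to homomorphisms, which is the form used in Proposition~\ref{prop:no_homo_q0} and the $\mathbb H^2$ corollaries. Your commutator argument, by contrast, uses the homomorphism property essentially.

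Two things to tidy.

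First, make Step~1 rigorous by pigeonhole rather than ``growth rates''. Put $y=\ell^{-1}\phi(s)\ell$. Then the elements $t^{-b(a)}y^a$ lie in a fixed finite ball, so $y^k=t^j$ for some $k\neq0$. You have $j\ne0$, since otherwise $y=1$ by torsion-freeness, contradicting $|b(a)|\to\infty$. So $y$ centralises $t^j$, giving $y=t^cu$ with $u\in A_{\mathrm{lk}(t)}$. Then $u^k=1$ forces $u=1$.

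Second, your ``Concretely'' sentence is false and should be dropped. The $\phi$-images of $(s_vs_{v'})^k$ and $(s_vs_{v'}^{-1})^k$ differ by $\phi(s_{v'})^{-2k}$, which is unbounded. Keep instead your comparison of $s_v^ks_{v'}^k$ with $s_{v'}^ks_v^k$, or simply observe that $\ker\phi$ contains an element of infinite order.
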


\begin{proof}
If $F$ were at finite distance from a homomorphism, then by \cite[Lem.~9.2]{baderbensaidpetyt:quasiisometric:rigidity}, for each $v\in\Gamma$ there would have to be some $w\in\Lambda$ such that $f(g\cdot v)\in A_\Lambda\cdot w$ for all $g\in A_\Gamma$. The fact that this fails essentially follows from the definition of glides: see Figure~\ref{fig:early_glide}.

To be concrete, using the cyclic ordering on the vertices of $\Gamma=C_m$ and $\Lambda=C_n$, we have $f(v_{2p-2})=\ell\cdot w_{0}$ and $f(v_{2p+1})=\ell t_{\pm 2}\cdot w_{\mp1}$, which is a shortcut of the block $f(\Gamma)$. By definition of the glide along $f(v_{2p+1})$ we have $f(s_{2p+1}\cdot v_{2p-2})=\ell t_{\pm 2}t_{\mp1}^2\cdot w_{\pm2}$, which is not in $A_\Lambda\cdot w_0$.
\end{proof}




On the other hand, when $q=0$ the map $F$ coarsely recovers \cite[Thm~1.12]{kimkoberda:embedability} for $m,n>6$; \emph{cf.}\ \cref{sec:induced_implies_subgroup}.

\begin{lemma} \label{lem:homo}
If $q=0$, then the map $F$ defined above is at finite distance from a homomorphism $A_{C_m}\to A_{C_n}$.
\end{lemma}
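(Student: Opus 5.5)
When $q=0$ no block has shortcuts, so $\short(B)=\emptyset$ for every marked block $B$. Consequently gliding never occurs in Item~\ref{sh:f_construction}: every block $f(g\cdot\Gamma)$ is obtained from $f(\Gamma)=B(0,+,0;1)$ by a sequence of doubles. Each double only changes the last parameter of $B(\alpha,\eps,\kappa;h)$. Hence every block is $B(0,+,0;h_g)$ for some $h_g\in A_\Lambda$, and the map $v\mapsto\type$ of the corresponding vertex is the same in every block. This is the property that \cite[Lem.~9.2]{baderbensaidpetyt:quasiisometric:rigidity} says is necessary, and the plan is to show it is also sufficient here by writing down the homomorphism explicitly.

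\textbf{Step 1: the candidate homomorphism.} For each $v_i\in\Gamma$, let $\iota_{+,1}(v_i)=\ell_i\cdot w_{j(i)}$, where $\ell_i\in\L'(+,0)$ is minimal. Define
\[
\Phi(s_i)\,=\,\ell_i t_{j(i)}\ell_i^{-1}.
\]
Adjacent $v_i,v_{i'}$ map to adjacent vertices of $\Lambda^\ext$, since $\partial B^+$ is induced when $q=0$. Hence $\Phi(s_i)$ and $\Phi(s_{i'})$ are conjugates of commuting generators by a common element, so $\Phi$ extends to a homomorphism $A_\Gamma\to A_\Lambda$. This is exactly the doubling formula: $\doub^a_{h\ell\cdot w_j}B(\ldots;h)=B(\ldots;h\ell t_j^a\ell^{-1})$.

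\textbf{Step 2: compare $F$ with $h_g$.} By induction on syllable length, for a syllable-reduced word $\gamma$ representing $g$ I would show $h_g=\Phi(g)$. The key point is that the vertex $f(\gamma'\cdot v_d)$ equals $h_{g'}\ell_d\cdot w_{j(d)}$, with the same relative label $\ell_d$ as in the base block, because all blocks are translates of $B^+$. The double then multiplies $h_{g'}$ on the right by $\Phi(s_d)^{a_d}$.

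Next, by Item~\ref{sh:roots}, $F(\gamma)\cdot\Lambda$ is one of the $p+1$ copies of $\Lambda$ in $f(\gamma\cdot\Gamma)=B(0,+,0;h_g)$. Therefore $F(g)=h_g\ell$ for some $\ell\in\L'(+,0)$. This gives
\[
\dist(F(g),\Phi(g))\,\le\,\max_{\ell\in\L'}|\ell|\,\le\, p,
\]
and so $F$ is at finite distance from the homomorphism $\Phi$.

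\textbf{Main obstacle.} The delicate point is the bookkeeping in Step 2. The relative label $\ell$ used in Item~\ref{sh:roots} is measured from $F(\gamma')$, not from the block's origin $h_{g'}$, and Item~\ref{sh:roots} is phrased in terms of $F$. So I would first prove the cleaner statement $f(g\cdot\Gamma)=B(0,+,0;\Phi(g))$ directly from Item~\ref{sh:f_construction}. This uses that doubling is independent of the choice of label, as noted in Item~\ref{sh:doubling}, and that $f$ is well defined by Lemma~\ref{lem:f_well_defined}. Only after that would I deduce the bounded-distance statement from the block membership of $F(g)\cdot\Lambda$. This route sidesteps any need to track minimal labels.
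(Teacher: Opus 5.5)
Your proof is correct and follows essentially the same route as the paper: with $q=0$ there are no shortcuts, so every block is $B(0,+,0;h)$, the doubling formula shows the block origin equals your $\Phi(g)$ and is multiplicative, and $F(g)$ differs from it by a basic label of length at most $p$. The only cosmetic difference is the order of the steps: the paper defines $\phi(g)$ as the block origin and then checks that it is a homomorphism, whereas you define $\Phi$ on generators first. Your commutation check is valid, though it only needs that $\iota_{+,1}$ sends edges to edges, not that $\partial B^+$ is induced.
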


\begin{proof}
If $q=0$, then from Item~\ref{sh:doubling} we see that every block appearing as $f(g\cdot \Gamma)$ for some $g\in A_\Gamma$ has the form $B(0,+,0;h)$: there is no shifting or rotation. In particular, if we fix $i$ then $f(g\cdot v_i)$ has the same type for every $g\in A_\Gamma$. Consider the map $\phi$ defined by setting $\phi(g)=h$. That is, $f(g\cdot \Gamma)=B(0,+,0;\phi(g))$. We have $\dist(F,\phi)\le\max\{|\ell|\,:\,\ell\in\L\}=p$. We show that $\phi$ is a homomorphism.

Let $gs_{1}^{a_1}\in A_\Gamma$ be a syllable-reduced expression with $a_1\ne0$, and let $h=\phi(g)$. Let $s_{2}$ be a generator of $A_\Gamma$. The fact that there is no shifting or rotation implies that if $\ell_k\in\L$ is minimal such that $f(v_{k})=\ell_k\cdot w_{k}$ for $k\in\{1,2\}$, then $\ell_1$ is also the minimal element of $\L$ such that $f(g\cdot v_{1})=\phi(g)\ell_1\cdot w_{1}$, and $\ell_2$ is the minimal element of $\L$ such that $f(gs_{1}^{a_1})\cdot v_{2}=\phi(gs_{1}^{a_1})\ell_2\cdot w_{2}$. By definition, we therefore have 
\begin{align*}
\phi(gs_{1}^{a_1}s_{2}^{a_2}) \,=\, \phi(g)\ell_1t_{1}^{a_1}\ell_1^{-1}\ell_2t_{2}^{a_2}\ell_2^{-1}
    \,=\, \phi(gs_{1}^{a_1})\phi(s_{2}^{a_2}),
\end{align*}
and it follows that $\phi$ is a homomorphism.
\end{proof}

\section{Cyclic subgraphs of extension graphs} \label{sec:cyclic_subgraphs_of_ext_graphs}

In the previous section we constructed a quasiisometric embedding $A_\Gamma\to A_\Lambda$ by first constructing a combinatorial embedding of their extension graphs. By \cref{thm:qie_induces}, the existence of a combinatorial embedding $\Gamma^\ext\to\Lambda^\ext$ is a necessary condition for the existence of such a quasiisometric embedding, since $\Gamma$ and $\Lambda$ are cycles of length greater than four. 

In this section we show that when $n>4$, if $m>n$ cannot be written as $n+p(n-4)+q(n-2)$ for $p\geq 1$, then $C_m^\ext$ does not combinatorially embed in $C_n^\ext$, and thus by \cref{thm:qie_induces} there does not exist a quasiisometric embedding $A_{C_m}\to A_{C_n}$.

We will need the following tool:

\begin{lemma} \label{lem:gen_hom_in_extgraph}
    Let $\Gamma$ be a triangle-free graph. If $\{\sigma_i\, :\, i\in I\}$ generate $H_1(\Gamma,\sfrac\Z{2\Z})$, then $\{g\cdot\sigma_i\, :\, i\in I,\, g\in A_{\Gamma}\}$ generate $H_1(\Gamma^{\ext},\sfrac\Z{2\Z})$. In particular, if $\Gamma=C_n$ then $H_1(\Gamma,\sfrac\Z{2\Z})$ is generated by $\{g\cdot C_n\, :\, g\in A(C_n)\}$.
\end{lemma}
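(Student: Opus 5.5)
We will use the description of $\Gamma^\ext$ as an increasing union of graphs obtained by repeated doubling \cite{kimkoberdalee:finite}. Concretely, order the elements of $A_\Gamma$ by syllable length and define an increasing sequence of finite subgraphs $X_0=1\cdot\Gamma\subseteq X_1\subseteq\cdots$ with $\bigcup_k X_k=\Gamma^\ext$. Each $X_{k+1}$ is obtained from $X_k$ by adjoining a single copy $g\cdot\Gamma$. Here $g=g's^a$ with $g'\cdot\Gamma\subseteq X_k$, and the new copy meets $X_k$ exactly in $g'\cdot\star(v)=g\cdot\star(v)$.

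We will check that ordering by syllable length (and, within a syllable length, arbitrarily) makes the intersection exactly this star. This is the step I expect to be the main obstacle. It amounts to the statement that $\Gamma^\ext$ is a ``tree of copies of $\Gamma$'' glued along stars. I would derive it from the fact, noted after Definition~\ref{def:extension_graph} via Proposition~\ref{prop:shuffling}, that $g\cdot u=h\cdot u$ forces $g,h$ to differ by a terminal string of letters commuting with $s_u$.

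Since homology commutes with directed unions of subcomplexes, it suffices to show inductively that $H_1(X_k,\sfrac\Z{2\Z})$ is generated by the classes $g\cdot\sigma_i$ for the copies $g\cdot\Gamma\subseteq X_k$. The base case is the hypothesis on the $\sigma_i$.

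For the inductive step, apply Mayer--Vietoris to $X_{k+1}=X_k\cup g\cdot\Gamma$ with intersection $g\cdot\star(v)$. Because $\Gamma$ is triangle-free, $\star(v)$ is a tree: $v$ together with its neighbours and the edges from $v$, with no edges among the neighbours, since such an edge would form a triangle. More precisely, the star is the induced subgraph on the closed neighbourhood, and triangle-freeness makes it a star graph. Hence $H_1(\star(v))=0$ and $\tilde H_0(\star(v))=0$. Mayer--Vietoris then gives $H_1(X_{k+1})\cong H_1(X_k)\oplus H_1(g\cdot\Gamma)$. The second summand is generated by the $g\cdot\sigma_i$, since $g$ acts as a graph isomorphism $\Gamma\to g\cdot\Gamma$. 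This completes the induction.

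For the final claim, $H_1(C_n,\sfrac\Z{2\Z})\cong\sfrac\Z{2\Z}$ is generated by the fundamental cycle of $C_n$. Applying the first part to this single generator gives the translates $g\cdot C_n$.
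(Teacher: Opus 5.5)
The step you flagged as the main obstacle is false as stated. The new copy $g\cdot\Gamma$ need not meet $X_k$ in a single star, and $\Gamma^{\ext}$ is not a tree of copies of $\Gamma$ glued along stars. Take $\Gamma=C_n$, adjacent vertices $v_1,v_2$, and $g=s_1s_2=s_2s_1$. Both $s_1\cdot\Gamma$ and $s_2\cdot\Gamma$ precede $g\cdot\Gamma$ in your ordering. Now $g\cdot\Gamma$ meets $s_2\cdot\Gamma$ in $g\cdot\star(v_1)$ and meets $s_1\cdot\Gamma$ in $g\cdot\star(v_2)$, and their union is a path with three edges, not the star of any vertex. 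In general, whenever $g=g''s_1^as_2^b$ ends in two commuting syllables, both $g''s_1^a\cdot\Gamma$ and $g''s_2^b\cdot\Gamma$ are already present, so the intersection contains $g\cdot\star(v_1)\cup g\cdot\star(v_2)$. This is not an artefact of the ordering. The copies indexed by $1,s_1,s_1s_2,s_2$ meet cyclically, along stars of type-$s_1$ and type-$s_2$ vertices alternately, so whichever of them is added last meets the earlier ones in two different stars. The normal-form fact you planned to use therefore cannot establish the claim.

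Your Mayer--Vietoris strategy does survive, but only once the intersection is computed correctly, and that computation is the real content. Since $\Gamma$ is triangle-free, $g$ has at most two final syllables. Use two facts: the centraliser of $s_u$ is $A_{\star(u)}$, and $\syl{gc}=\syl{g}+\syl{c}$ unless a final syllable of $g$ and an initial syllable of $c$ use the same generator. With these, $g\cdot\Gamma\cap X_k$ is exactly $g\cdot\star(v)$ when $g$ has one final syllable, and exactly $g\cdot\star(v_1)\cup g\cdot\star(v_2)$ when it has two. In the second case you must also exclude edges $g\cdot xy$ with $x\in\star(v_1)\ssm\{v_1,v_2\}$ and $y\in\star(v_2)\ssm\{v_1,v_2\}$ adjacent; these exist when $\Gamma$ has a square through $v_1v_2$. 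Both endpoints of such an edge lie in $X_k$. However, since $\star(x)\cap\star(y)=\{x,y\}$, the only copies containing the edge are $h\cdot\Gamma$ with $h\in gA_{\{x,y\}}$, and all of these other than $g\cdot\Gamma$ have larger syllable length. Hence the intersection is a star or two stars joined along an edge, which is a tree, and your splitting $H_1(X_{k+1})\cong H_1(X_k)\oplus H_1(g\cdot\Gamma)$ holds.

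There is also a smaller issue with the exhaustion. Each syllable-length level of $A_\Gamma$ is infinite, so ordering by syllable length does not give an $\mathbb N$-indexed sequence. Order by word length instead, where the same analysis applies, or use transfinite induction.

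The paper avoids all of this by using Kim--Koberda's doubling sequences. Each step doubles the whole finite subgraph $\Gamma_{l-1}$ along a single vertex, so the intersection really is one star in the triangle-free graph $\Gamma^{\ext}$. Induction on the length of the sequence, applied to both halves of the double, then finishes the proof.
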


\begin{proof}
    By \cite[Lemma 3.1]{kimkoberda:embedability} (see also \cite{kimkoberdalee:finite}), given a graph $\Gamma$ and a finite subgraph of $\Gamma^{\ext}$ denoted $\Lambda$, there is a sequence $1\cdot \Gamma=\Gamma_0\leq \Gamma_1\leq \dots \leq \Gamma_l$ of induced subgraphs of ${\Gamma^{\ext}}$ and vertices $g_i\cdot v_{s_i}\in \Gamma_{i-1}$ such that:
    \begin{itemize}
        \item $\Gamma_i=\Gamma_{i-1}\cup g_is_ig_i^{-1}\cdot \Gamma_{i-1}$ as a subgraph of $\Gamma^{\ext}$ and is isomorphic to the abstract double $\Gamma_{i-1}\cup_{\str_{\Gamma_{i-1}}(v_i)} \Gamma_{i-1}$, and
        \item $\Lambda$ is a subgraph of $\Gamma_l$.
    \end{itemize}
    We will call such a sequence a \emph{doubling sequence of length $l$}.

Let $\{\sigma_i\, :\, i\in I\}$ generate $H_1(\Gamma,\sfrac\Z{2\Z})$. We will prove by induction on $l$ that if $\sigma\in H_1(\Gamma^\ext,\sfrac\Z{2\Z})$ and there exists a doubling sequence of length $l$ containing the support of $\sigma$, then there exists $w_1,...,w_k$ such that $\sigma=\sum_{j=1}^k w_j\cdot \sigma_{i_j}$.
As every element of $H_1(C_n^{\ext},\sfrac\Z{2\Z})$ has finite support, this will establish the result.

If $l=1$ there is nothing to prove. For $l>1$, let $\sigma$ be such that there exists doubling sequence of length $l$ such that $\Gamma_l$ contains the support of $\sigma$. Let $g_l\cdot v_{s_l}\in \Gamma_{l-1}$ be the vertex along which we doubled to obtain $\Gamma_l$ from $\Gamma_{l-1}$. Since $\Gamma$ is triangle-free, $\str(v_l)$ is contractible. By \cite[Lem. 3.9(i)]{kimkoberda:embedability}, so is $\Gamma^\ext$. Hence $H_1(\Gamma_l,\sfrac\Z{2\Z})$ is isomorphic to $H_1(\Gamma_{l-1}\vee \Gamma_{l-1},\sfrac\Z{2\Z})$. The image of $\sigma$ via this isomorphism is a sum of two cycles, each supported on a copy of $\Gamma_{l-1}$, hence $\sigma=\sigma^1+\sigma^2$ where $\sigma^1$ is a cycle supported on $\Gamma_{l-1}$ and $\sigma^2$ is a cycle supported on $g_ls_lg_l^{-1}\cdot \Gamma_{l-1}$. By the inductive hypothesis applied to $\sigma^1$ and $(g_ls_lg_l^{-1})^{-1}\cdot \sigma^2$, there exist $w_1,...,w_k,h_1,\dots, h_r\in A_{\Gamma}$ such that $\sigma^1=\sum_{j=1}^kw_j\cdot \sigma_{i_j}$ and $(g_ls_lg_l^{-1})^{-1}\cdot \sigma^2=\sum_{j=1}^mh_j\cdot \sigma_{i'_j}$. This shows $\sigma=\sum_{j=1}^kw_j\cdot \sigma_{i_j}+\sum_{j=1}^m(g_ls_lg_l^{-1})h_j\cdot \sigma_{i'_j}$, which finishes the proof. 
\end{proof}

The following is the main result of this section.

\begin{proposition} \label{prop:cycles_equation}
Let $m,n>4$. 
If $C_m^\ext$ combinatorially embeds in $C_n^\ext$, then either $m=n$ or there exist $p\geq 1,q\ge0$ such that $m=n+p(n-4)+q(n-2)$. 
\end{proposition}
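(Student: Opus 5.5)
The plan is to consider only the image of the base cycle $1\cdot C_m$ under the embedding $\phi:C_m^\ext\to C_n^\ext$, and to compute its length in terms of how many copies of $C_n$ it passes through. Let $Z=\phi(1\cdot C_m)$; it is an embedded cycle of length $m$ in $C_n^\ext$. By \cref{lem:gen_hom_in_extgraph}, the class of $Z$ in $H_1(C_n^\ext,\sfrac\Z{2\Z})$ is a finite sum $\sum_j g_j\cdot C_n$ of translates of the base cycle. The extension graph $C_n^\ext$ has a tree-like structure: it is built by repeated doubling along stars, which are contractible since $C_n$ is triangle-free (see \cite[Lem.~3.9]{kimkoberda:embedability}). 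So after reducing mod $2$ we may assume the translates $g_j\cdot C_n$ are distinct. Their union is then a ``tree of $n$-cycles'', in which any two cycles meet in at most a star (a path of length two) or in a single vertex. Since $Z$ is a simple cycle, it should be exactly the boundary of this union, and the union should be tree-like in the sense that its dual graph, with vertices the cycles and edges the shared paths or vertices, is a tree. If $Z$ lies in a single copy, then $m=n$.

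Next I count. Let $k=1+p+q$ be the number of cycles. Each gluing of two $n$-cycles along a star of length two removes $4$ edges from the boundary length. A gluing along a single edge would remove $2$, but this cannot occur: two translates meeting in an edge share both endpoints, hence the star of each endpoint, hence the whole star of at least one vertex. Gluing at a single vertex, or a configuration where the shared path has length two but $Z$ passes through the middle vertex, would make $Z$ non-simple or make the boundary non-induced. I expect to get $m=kn-4(k-1)-2q$, where $q$ counts the gluings in which some boundary edge of $Z$ becomes a chord, as with the shortcuts of Item~\ref{sh:shortcuts}. This rewrites as $m=n+p(n-4)+q(n-2)$.

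The key remaining constraint is $p\ge1$. This is where the hypothesis that the whole $C_m^\ext$ embeds, rather than just $C_m$, must enter. If $p=0$, then every gluing produces a chord. In that case I would show that some vertex $v\in C_m$ has an image $\phi(v)$ whose link in $C_n^\ext$ is too small. The neighbourhood of $1\cdot C_m$ in $C_m^\ext$ contains the double along $v$, which meets $1\cdot C_m$ in exactly the star of $v$ and is otherwise disjoint from it. Its image has to fit in $C_n^\ext$ as another cycle of length $m$ through $\phi(\star v)$, disjoint from $Z$ elsewhere and with no edges to $Z$ outside the star. Two consecutive shortcut copies force the chord vertices to be adjacent to non-neighbours in $Z$. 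Doubling along a vertex adjacent to a chord then forces the doubled cycle to reuse an edge, contradicting injectivity. Alternatively, the requirement that $\phi(1\cdot C_m)$ together with its doubles be induced-compatible forces the first gluing to be chord-free.

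I expect the main obstacle to be making the ``boundary of a tree of $n$-cycles'' count rigorous: classifying exactly how two translates $g\cdot C_n$ and $h\cdot C_n$ in the mod-$2$ decomposition can intersect, and how $Z$ traverses the intersection. I would handle this by induction along a doubling sequence as in the proof of \cref{lem:gen_hom_in_extgraph}, peeling off a leaf cycle of the dual tree and tracking the change in $m$ and in the count of chords.
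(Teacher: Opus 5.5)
Both halves of your plan have genuine gaps.

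\textbf{The counting step.} Your claim that two translates cannot meet in exactly one edge is false. If $t_1,t_2$ commute, then $1\cdot C_n\cap t_1t_2\cdot C_n=\{w_1,w_2\}$. Indeed, $w_i$ is shared iff $t_1t_2$ centralises $t_i$, i.e.\ lies in $\langle\star(w_i)\rangle$, and this happens only for $i\in\{1,2\}$. Sharing both endpoints of an edge does not mean sharing a star.

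These single-edge gluings are exactly the shortcuts of Item~\ref{sh:shortcuts}: consecutive copies $B^\eps_j(h)$, $B^\eps_{j+1}(h)$ with $j\ge p$ meet in one edge. They are where the summands $n-2$ come from. A star gluing adds $n-4$ to the length of $Z$, while an edge gluing adds $n-2$ and leaves the shared edge as a chord, so $m=kn-4p-2q$. Your formula $kn-4(k-1)-2q$ with $k=1+p+q$ equals $n+p(n-4)+q(n-6)$, so it does not rewrite as you claim.

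You also cannot discard configurations because they make $Z$ non-induced. A combinatorial embedding need not have induced image, and for $q>0$ the image of $1\cdot C_m$ built in Section~\ref{sec:building_qie} is not induced.

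For comparison, the paper never establishes the exact ``tree of cycles'' structure you identify as the main obstacle. For a sum $\sigma$ of $k+1$ translates it proves $n+k(n-4)\le M(\sigma)\le n+k(n-2)$, peeling off a translate of maximal syllable length for the lower bound. It combines this with $M(\sigma)\equiv (k+1)n \pmod 2$, which already yields $m=n+p(n-4)+q(n-2)$ with $p,q\ge0$.

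\textbf{The $p\ge1$ step.} This step contains no argument that actually uses $p=0$. Chords and doubles coexist in the embeddings of Section~\ref{sec:building_qie} whenever $p\ge1$ and $q>0$. So assertions such as ``doubling along a vertex adjacent to a chord forces an edge to be reused'' are false as stated.

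The paper works with the numerical hypothesis rather than a structural count:
\begin{enumerate}
\item One may assume $m=n+q(n-2)$ with $q<n-4$. If $q\ge n-4$, then $m=n+(n-2)(n-4)+(q-n+4)(n-2)$ already has $p\ge1$.
\item The bounds, parity and the hypothesis force exactly $q+1$ translates.
\item A vertex count shows that every vertex of their union lies on $Z$. Hence the $q$ edges of the union outside $Z$ are chords.
\item Since $q<n-4$ is below the girth $n$, the chords contain no cycle. So some vertex $w=w_1$ is the endpoint of exactly one chord $w_0w_1$, and its neighbours on $Z$ are $w_2$ and $t_0^b\cdot w_2$.
\item Only now is the embedding of $C_m^\ext$ used. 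The image $C'$ of the double along $\phi^{-1}(w)$ meets $Z$ exactly in the path $(w_2,w_1,t_0^b\cdot w_2)$.
\item As $C'$ also contains every vertex of its own translates, none of them contains $w_0$. So its translates around $w_1$ form a fan $h_\ell=(t_1^{k_1}t_2^{a_1})(t_1^{k_2}t_0^{a_2})\cdots$.
\item A reduced-word comparison shows that the last translate of this fan cannot contain $t_0^b\cdot w_2$.
\end{enumerate}

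The missing ideas are the reduction to $q<n-4$, the vertex incident to exactly one chord, and this explicit analysis of the doubled cycle.
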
 

\cref{prop:cycles_equation} follows immediately from the next two lemmas.

\begin{lemma}
\label{lem:embedding_of_cycle_in_ext}
Let $m,n>4$. 
If $C_m$ combinatorially embeds in $C_n^\ext$, then there exist $p,q\ge0$ such that $m=n+p(n-4)+q(n-2)$. 
\end{lemma}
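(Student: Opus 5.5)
Any embedded cycle $C_m$ in $C_n^\ext$ (not necessarily induced) is a $\sfrac\Z{2\Z}$-cycle. I will express it as a sum of translates of $C_n$ and count edges modulo what cancels. By \cref{lem:gen_hom_in_extgraph}, the class $[C_m]\in H_1(C_n^\ext,\sfrac\Z{2\Z})$ is a sum $\sum_{j=1}^k g_j\cdot C_n$ of distinct translates. Since $C_n^\ext$ is a graph, $H_1$ is just the cycle space, so the edge set of $C_m$ equals the symmetric difference of the edge sets of the $g_j\cdot C_n$. My plan is to choose this decomposition minimally and then analyse how the translates overlap.

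First I set up the structure. I use the doubling sequence from the proof of \cref{lem:gen_hom_in_extgraph}: $C_m$ lies in some $\Gamma_l$ obtained by iterated doubles. I view $\Gamma_l$ as a tree-like arrangement of copies of $C_n$, a ``tree of cycles''. The dual graph has a vertex for each copy $g\cdot C_n$ used and an edge when two copies share a star $\str(g\cdot v)$, that is, a path of two edges. The key observations are these:
\begin{itemize}
\item Since stars in a triangle-free graph are contractible and doubling glues along stars, the copies form a tree-graded structure. Hence $[C_m]$ is a sum over a \emph{connected subtree} of copies, $k$ of them.
\item Two adjacent copies can intersect in a star, which is a path of two edges. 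By the shape of $C_n^\ext$ they may also meet in only a single vertex, or a single edge, when two copies abut along part of a star through a third copy.
\end{itemize}

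Second, I count. The symmetric difference of $k$ translates glued in a tree pattern has $|C_m|=kn-2\sum_e |\text{overlap}_e|$ edges. The cycle $C_m$ is simple, and this should force each gluing to cancel either exactly $2$ edges, a full shared star, or exactly $1$ edge. In the first case the contribution is $n-4$; in the second it is $n-2$, as in the shortcut gluings $t_1t_2$ of the blocks of \cref{sh:blocks}. Gluings with no common edge are excluded, because then the boundary would pinch at a vertex and fail to be a simple cycle. With $p$ gluings of the first kind and $q$ of the second, $p+q=k-1$ and $m=n+p(n-4)+q(n-2)$. The case $k=1$ gives $m=n$, which is $p=q=0$.

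The main obstacle is justifying the claims about the overlaps rigorously: that the relevant copies form a tree whose adjacent members share exactly $1$ or $2$ cancelled edges, and that no edge lies in three copies with odd cancellation patterns. I would try to prove this by induction along the doubling sequence. When passing from $\Gamma_{l-1}$ to $\Gamma_l$, the cycle $C_m$ either stays on one side, or crosses the star $\str(g\cdot v)$, which is a path of length two, in exactly two vertices. In the crossing case $C_m$ splits as two arcs, one on each side, sharing endpoints in the star. I close each arc with the appropriate portion of the star: length $2$ if the endpoints are the two leaves of the star, length $1$ if one endpoint is the centre. This gives two shorter simple cycles $C_{m_1}$ and $C_{m_2}$ in fewer doubles with $m=m_1+m_2-4$ or $m=m_1+m_2-2$. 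Simplicity and $n>4$ rule out closing with length $0$. Applying induction to each piece and adding, the base case is a cycle inside a single copy of $C_n$, which must equal it. This gives $m\in n+(n-4)\N+(n-2)\N$.
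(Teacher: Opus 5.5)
\textbf{There is a genuine gap: the crossing step in your last paragraph is false.} Your argument rests on the induction in that paragraph; the tree-of-cycles count before it is only heuristic. For instance, $C_n+t_1C_n+t_2C_n$ is a simple $(3n-6)$-cycle in which the edge $w_1w_2$ lies in all three summands.

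The separator of the double $\Gamma_l=\Gamma_{l-1}\cup g_ls_lg_l^{-1}\cdot\Gamma_{l-1}$ is $S=\str_{\Gamma_{l-1}}(v)$. This is the star of $v$ in $\Gamma_{l-1}$, not in a single translate of $C_n$. It is a $K_{1,d}$ whose number of leaves can be arbitrarily large, and a simple cycle can pass through several of these leaves, changing sides each time.

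Here is a concrete example. Let $P=C_n\ssm\{w_0\}$. Every neighbour of $w_0$ in $C_n^\ext$ is a leaf of the tree $P^\ext$. Moreover $t_0^aP^\ext\cap t_0^bP^\ext=\mathrm{lk}(w_0)$ for $a\ne b$, by applying the retraction $A_{C_n}\to A_P$ that kills $t_0$. Join $x_1=w_1$, $x_2=w_{-1}$, $x_3=t_{-1}\cdot w_1$, $x_4=t_1\cdot w_{-1}$ cyclically by the tree paths in $P^\ext$, $t_0^2P^\ext$, $t_0P^\ext$ and $t_0^3P^\ext$ respectively. This gives a simple cycle. Now set $\Gamma_2=C_n\cup t_{-1}C_n\cup t_1C_n\cup t_1t_{-1}C_n$, $\Gamma_3=\Gamma_2\cup t_0\Gamma_2$ and $\Gamma_4=\Gamma_3\cup t_0^2\Gamma_3$. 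The cycle lies in $\Gamma_4$ and meets the separator $\str_{\Gamma_3}(w_0)$ in four leaves, alternating between the two sides. So it does not split into two cycles $C_{m_1},C_{m_2}$ as you describe.

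\textbf{The idea can be rescued.} Every neighbour of $v$ lies in $S$, and no two leaves of $S$ are adjacent. So a simple cycle $C$ using both sides is a cyclic concatenation of $k\ge2$ maximal one-sided arcs $\gamma_i$ between leaves of $S$, together with at most one passage $x\,v\,y$ through the centre. Close each $\gamma_i$ by the $2$-path through $v$. This gives simple cycles $C_i$, each in $\Gamma_{l-1}$ or its translate, whose $\sfrac\Z{2\Z}$-sum is $C$. Counting edges, $m=\sum_i|C_i|-2j$, where $j=k$ if $v\notin C$ and $j=k-1$ if $v\in C$. Since $(k-1)n-2j=(2k-2-j)(n-2)+(j-k+1)(n-4)$, the set $n+(n-4)\N+(n-2)\N$ is closed under $(c_1,\dots,c_k)\mapsto\sum_ic_i-2j$ whenever $k-1\le j\le 2k-2$. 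With this, the induction closes.

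Repaired this way, yours is a genuinely different route from the paper's. The paper works in $H_1(C_n^\ext,\sfrac\Z{2\Z})$ and bounds the number of edges of a sum of $k+1$ translates:
\begin{itemize}
\item below by $n+k(n-4)$, by peeling off a translate of maximal syllable length, with a pairing argument when it overlaps the rest in three edges;
\item above by $n+k(n-2)$ when the sum is a cycle;
\end{itemize}
it then concludes by parity. The paper's version also keeps track of the number of summands, which it reuses in the next lemma. Your repaired version avoids the syllable-length case analysis and the homology-generation lemma.
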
 

\begin{proof}
We view cyclic subgraphs of $C_n^\ext$ as elements of $H_1(C_n^\ext,\sfrac\Z{2\Z})$, which is generated by $\{g\cdot C_n\,:\,g\in A_{C_n}\}$, by \cref{lem:gen_hom_in_extgraph}.

For an element $\sigma\in H_1(C_n^\ext,\sfrac\Z{2\Z})$, we write $M(\sigma)$ for the number of edges in its support. If $\sigma=\sum_{i=1}^kg_i\cdot C_n$, we can consider $\hat\sigma$ to be the element of $H_1(C_n^\ext,\Z)$ represented by $\sum_{i=1}^kg_i\cdot C_n$ where we choose an orientation of $C_n$ and that fixes an orientation on $g\cdot C_n$. Observe that the sum of the coefficients of edges in $\hat\sigma$ is $kn$, hence $M(\sigma)$ has the same parity as $kn$.

Let us show by induction that if $\sigma\in H_1(C_n^\ext,\sfrac\Z{2\Z})$ is a sum of $k+1$ copies of $C_n$, then $M(\sigma)\ge n+k(n-4)$. For $k=0$ this is trivial, so suppose $k>0$. Let $\Sigma\subset A_{C_n}$ be such that $\sigma=\sum_{g\in\Sigma}g\cdot C_n$. Since $\Sigma$ is finite, there is an element $g\in \Sigma$ with maximal syllable-length. Let $\sigma'=\sigma-g\cdot C_n$. By the inductive hypothesis, $M(\sigma')\ge n+(k-1)(n-4)$. We split into two cases, according to whether or not $g$ has a unique final syllable.

If $g$ has a unique final syllable, then let $g=g't^a$ be a syllable-reduced expression for $g$ where $t$ is a standard generator of $A_{C_n}$. In particular, if $w$ is the vertex of $1\cdot C_n$ of type $t$, then $g'\cdot w=g\cdot w$. 
Suppose $h\in \Sigma$ is such that $h\cdot C_n$ shares at least one edge with $g\cdot C_n$. If there were such an edge that did not contain $w$, then $h$ would have greater syllable-length than $g$, which is impossible. This shows that at least $n-2$ edges of $g\cdot C_n$ lie outside the support of $\sigma'$. Hence $M(\sigma)\ge M(\sigma')+n-4$, where the ``$-4$'' arises because the other two edges may ``cancel out'' two of the nontrivial coefficients of $\sigma'\in H_1(C_n^\ext,\sfrac\Z{2\Z})$.

If $g$ does not have a unique final syllable, then since $C_n$ is triangle-free, there is a syllable-reduced expression $g=g't_1^at_2^b$ where $t_1$ and $t_2$ are commuting standard generators of $A_{C_n}$, neither commutes with any final letter of $g'$, and $a$ and $b$ are nonzero. Let $w_1$ and $w_2$ be the vertices of $1\cdot C_n$ corresponding to $t_1$ and $t_2$, respectively. If $h\in \Sigma$ is such that $h\cdot C_n$ shares at least one edge with $g\cdot C_n$, then the maximality of $g$ implies that, up to swapping $t_1$ and $t_2$, there are only three possibilities for $h$: either $h=g't_1^c$; or $h=g't_1^ct_2^d$; or $h=g't_1^at_2^d$ - in these expressions, $c\not\in\{0,a\}$, $d\not\in\{0,b\}$. See Figure~\ref{fig:structure_of_C_m}. In particular, this shows that at most three edges of $g\cdot C_n$ can lie in the support of $\sigma'$, namely the edge $g\cdot w_1w_2$ and its two neighbours $g\cdot w_0w_1$ and $g\cdot w_2w_3$. If at most two edges of $g\cdot C_n$ lie in the support of $\sigma'$, then as above we deduce that $M(\sigma)\ge M(\sigma')+n-4$. 

\begin{figure}[ht]
\includegraphics{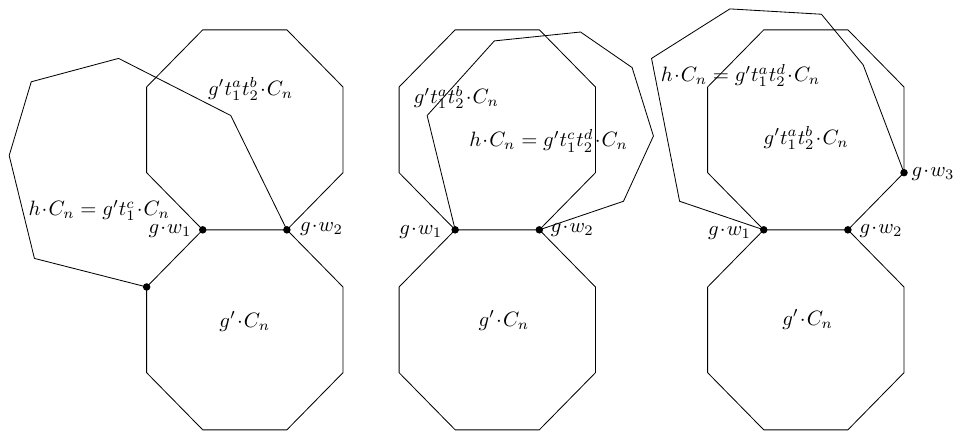}\centering 
\caption{The three possible ways for $h\cdot C_n$ to share an edge with $g\cdot C_n$ when $g$ does not have a unique final syllable.} \label{fig:structure_of_C_m}
\end{figure}

Suppose that all three edges $g\cdot w_1w_2$, $g\cdot w_0w_1$, and $g\cdot w_2w_3$ lie in the support of $\sigma'$, and in particular are not in $\sigma$. The description of the possible copies of $C_n$ attached to $g\cdot C_n$ tells us that there is some $h\in \Sigma\ssm\{g\}$ of the form $h=g't_1^at_2^d$. Let $\sigma''=\sigma'-h\cdot C_n$. We note that $h$ also has maximal syllable length in $\Sigma$, and so by the same analysis as in the previous paragraph, the only edges of $h\cdot C_n$ that can lie in the support of $\sigma''$ are $h\cdot w_0w_1$, $h\cdot w_1w_2$, and $h\cdot w_2w_3$. But the assumption that $g\cdot w_1w_2=h\cdot w_1w_2$ and $g\cdot w_2w_3=h\cdot w_2w_3$ both lie in the support of $\sigma'$ implies that neither lies in the support of $\sigma''$, as we have $\sfrac\Z{2\Z}$ coefficients. Hence 
\[
M(\sigma) \,=\, M(\sigma')+n-6 \,=\, M(\sigma'')+(n-6)+(n-2)
\]
and so satisfies the desired inequality, by induction.

Now assume that $C_m$ combinatorially embeds in $C_n^{\ext}$ with image $C$. Let $\sigma\in H_1(C_n^\ext,\sfrac\Z{2\Z})$ correspond to $C$. The nonzero class $\sigma$ is a sum of $k+1$ copies of $C_n$, for some $k\ge0$. We claim that $M(\sigma)\le n+k(n-2)$. This is true if $k=0$. For $k>0$, if this were not the case, then there would be some $g\cdot C_n$ making up $\sigma$ such that $M(\sigma-g\cdot C_n)=M(\sigma)-n$, by induction. But for this to occur, $g\cdot C_n$ would have to share no edges with the support of $\sigma-g\cdot C_n$, which is impossible, since $C$ is a cyclic subgraph.
We have shown that $M(\sigma)$ lies between $n+k(n-4)$ and $n+k(n-2)$, and has the same parity as $nk$, so the first statement follows.
\end{proof}

\begin{lemma}
\label{lem:embedding_of_extcycle_in_ext}
Let $m,n>4$. 
If $m>n$ is such that in every expression $m=n+p(n-4)+q(n-2)$ with $p,q\ge0$ we must have $p=0$, then $C_m^{\ext}$ does not combinatorially embed in $C_n^{\ext}$. 
\end{lemma}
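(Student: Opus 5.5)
The first step is to reduce to a single arithmetic case. Suppose $m>n$ and that every expression $m=n+p(n-4)+q(n-2)$ with $p,q\ge0$ has $p=0$. If there is no such expression at all, then $C_m$ does not embed in $C_n^\ext$ by Lemma~\ref{lem:embedding_of_cycle_in_ext}, and hence neither does $C_m^\ext\supseteq 1\cdot C_m$. So we may assume $m=n+q(n-2)$ with $q\ge1$, and that this is the only representation. Suppose for a contradiction that $f:C_m^\ext\to C_n^\ext$ is a combinatorial embedding. The idea is to show that the cycle $f(1\cdot C_m)$ is forced into a rigid ``extremal'' shape, and that this shape cannot be continued to the neighbouring copies $s\cdot C_m$.

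\textbf{Step 1: extremal structure of the image cycles.} Apply the proof of Lemma~\ref{lem:embedding_of_cycle_in_ext} to the class $\sigma\in H_1(C_n^\ext,\sfrac\Z{2\Z})$ of $C=f(g\cdot C_m)$, for any $g\in A_{C_m}$. Write $\sigma$ as a sum of $k+1$ translates of $C_n$, choosing $k$ minimal. That proof gives
\[
n+k(n-4)\le M(\sigma)\le n+k(n-2),
\]
with $M(\sigma)\equiv nk$ mod $2$, and each inductive step contributes $n-4$, $n-2$ or $2n-8$ new edges. I would sharpen the bookkeeping to record how many ``deficient'' steps (those contributing fewer than $n-2$) occur, so that $m=M(\sigma)$ is expressed as $n+p(n-4)+q'(n-2)$ with $p$ counting deficiencies. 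The hypothesis then forces $p=0$. So every step is of the $n-2$ kind: each removed copy $g\cdot C_n$ of maximal syllable length shares exactly one edge with the rest. Using the case analysis of Figure~\ref{fig:structure_of_C_m}, this means $C$ is the boundary of a tree of $k+1$ copies of $C_n$ glued pairwise along single edges, each gluing being of the form $g't_1^at_2^b\cdot C_n$ and $g't_1^at_2^d\cdot C_n$ meeting along the edge $g'\cdot w_1w_2$. These are exactly the ``shortcut'' gluings of Item~\ref{sh:shortcuts}. In particular $C$ has shortcut chords, and each vertex of $C$ has an explicitly described star in $C_n^\ext$.

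\textbf{Step 2: exploit the doubling structure of $C_m^\ext$.} For each vertex $v\in C_m$, the cycles $1\cdot C_m$ and $s_v\cdot C_m$ meet exactly in $\star(v)$, and $f$ is injective. Hence $f(1\cdot C_m)$ and $f(s_v\cdot C_m)$ are two extremal cycles, both of the Step~1 shape, that meet exactly in the $2$-edge path $f(\star v)$. Choose $v$ so that $f(v)$ is an endpoint of a shortcut edge of $f(1\cdot C_m)$. This is possible since $q\ge1$ forces $k\ge1$ and hence at least one gluing edge. Then the two cycle-neighbours of $f(v)$ lie in different copies of $C_n$. From the description of the star of $f(v)$ (its neighbours are $g'\cdot w_0$-type, $g'\cdot w_2$-type and the various $g't_1^c\cdot w_0$), I would show the following. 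Any cycle through $f(\star v)$ that meets $f(1\cdot C_m)$ only there must, at $f(v)$, leave through a copy of $C_n$ that is glued to the previous one along the two-edge path $f(\star v)$ rather than along a single edge, or else must reuse the shortcut twin of $f(v)$. The first alternative is a deficient step, giving $p\ge1$ for $f(s_v\cdot C_m)$, a contradiction. The second contradicts injectivity of $f$, since the shortcut twin already lies in $f(1\cdot C_m)\ssm f(\star v)$.

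\textbf{Main obstacle.} The hardest step is making Step~1 rigorous. The decomposition of $\sigma$ into translates of $C_n$ is not unique in $H_1(C_n^\ext,\sfrac\Z{2\Z})$, and the inequality argument is an induction rather than a canonical decomposition. I would first try to prove that a \emph{minimal} decomposition of the class of an embedded cycle is unique, or at least that its ``dual tree'' of gluings is canonical. The natural route is the doubling-sequence description from the proof of Lemma~\ref{lem:gen_hom_in_extgraph}: $C$ lies in some $\Gamma_l$ built from $C_n$ by iterated doubling along contractible stars, and cycles in such a graph decompose canonically along the cut stars. If that is in place, the local analysis at $f(v)$ in Step~2 becomes a finite check of the three configurations in Figure~\ref{fig:structure_of_C_m}.
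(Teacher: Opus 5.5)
Your outline is the paper's outline: force $C=f(1\cdot C_m)$ into an extremal shape, then show that the double $C'=f(s_v\cdot C_m)$, which meets $C$ exactly in the image of the star of $v$, is forced through the shortcut twin of $f(v)$. However, neither step is carried out, and the idea that makes both work is missing.

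\textbf{Step~1.} The paper does not use any uniqueness or canonical tree structure for decompositions, and you do not need one. Your sharpened bookkeeping is also not justified as stated. The intermediate sums in the induction of Lemma~\ref{lem:embedding_of_cycle_in_ext} are not simple cycles, so nothing there rules out a step contributing $n$ (a removed copy meeting $\mathrm{supp}(\sigma')$ in no edge), or the three-edge case contributing $2n-6$. Mixtures such as $n+(n-4)=2(n-2)$ then defeat the inference ``$p=0$, hence every step is an $(n-2)$-step''.

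What the paper does instead is count vertices. The two bounds, parity and the hypothesis force $|\Sigma|=q+1$ for \emph{any} decomposition $\sigma=\sum_{g\in\Sigma}g\cdot C_n$. The copies are edge-connected to one another, since otherwise $C$ would be disconnected or have a cut vertex. Hence $\bigl|\bigcup_{g\in\Sigma}V(g\cdot C_n)\bigr|\le n(q+1)-2q=m=|V(C)|$. So every vertex of every constituent copy lies on $C$, and there are exactly $q$ chords. This fact, applied to both $C$ and $C'$, is what the rest of the proof runs on. In particular, it is the only thing that turns ``some constituent copy of $C'$ contains the twin'' into ``$C'$ passes through the twin'', which your Step~2 uses without comment.

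\textbf{Step~2.} The dichotomy you aim for is the right one, but proving it is not ``a finite check of the three configurations in Figure~\ref{fig:structure_of_C_m}''. The star of $f(v)$ is infinite, and the constituent copies of $C'$ through $f(v)$ can form an arbitrarily long chain.

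The paper first picks $f(v)$ on exactly one chord; this is possible because $q<n-4$, so the chords contain no cycle. It normalises to $f(v)=w_1$, with chord $(w_1,w_0)$ and $C$-neighbours $w_2$ and $t_0^a\cdot w_2$. It then lists the copies $h_1\cdot C_n,\dots,h_k\cdot C_n$ of $C'$ through $w_1$, consecutive ones sharing an edge at $w_1$, running from $(w_1,w_2)$ to $(w_1,t_0^a\cdot w_2)$. Here $k>1$, because no copy of $C_n$ has two vertices of type $t_2$.

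Vertex counting gives $h_1\cdot C_n\cap 1\cdot C_n=(w_1,w_2)$, so $h_1=t_1^{k_1}t_2^{a_1}$ with $k_1,a_1\neq0$. Inductively, $h_\ell=(t_1^{k_1}t_2^{a_1})(t_1^{k_2}t_0^{a_2})\cdots$ is an alternating product. A normal-form comparison then shows $h_kt_2h_k^{-1}\neq t_0^at_2t_0^{-a}$.

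Equivalently: up to powers of $t_1$, the copies through $w_1$ are the edges of the Bass--Serre tree of $\langle t_0,t_1\rangle*_{\langle t_1\rangle}\langle t_1,t_2\rangle$. The counting makes the chain a non-backtracking path from $w_2$ to $t_0^a\cdot w_2$, and the unique such path passes through $w_0$.

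This computation is the heart of the lemma, and your proposal does not supply it.
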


\begin{proof}
Assume towards a contradiction that we have an embedding of $C_m^{\ext}$ in $C_n^{\ext}$ and that if $m=n+p(n-4)+q(n-2)$ then $p=0$ and $q>0$. By considering the number $(n-2)(n-4)$, we note that we must have $q<n-4$.

Let $C$ be the image of $1\cdot C_m$ and let $\sigma=\sum_{g\in \Sigma} g\cdot C_n\in H_1(C_n^\ext,\sfrac\Z{2\Z})$ correspond to $C$. Denote by $\Sigma$ the support of $\sigma$. We have $m=M(\sigma)=n+q(n-2)$.
As we have seen, $n+(|\Sigma|-1)(n-4)\leq M(\sigma)\leq n+(|\Sigma|-1)(n-2)$, so we must have $|\Sigma|=q+1$ because $q<n-4$.

A counting argument gives us that every vertex in $\bigcup_{g\in \Sigma}g\cdot C_n$ is in $C$. 
Indeed, $C$ contains $m=n+q(n-2)$ vertices, while each $g\cdot C_n$ contains $n$ vertices, but for every copy of $C_n$ at least two of the vertices are counted twice, as $g\cdot C_n$ must intersect $\bigcup_{g'\in \Sigma-\{g\}}g'\cdot C_n$ on at least an edge (otherwise $C$ contains a cut-vertex or is disconnected), so the number of vertices in $\bigcup_{g\in \Sigma}g\cdot C_n$ is at most $n(q+1)-2q=n+q(n-2)$. Thus, there is equality.

Consider all $q$ edges in $\bigcup_{g\in \Sigma} g\cdot C_n$ that are not in $C$. We must have a vertex $w\in C$ that is the endpoint of exactly one such edge $e$: if not then we get a cycle of such edges, contradicting the fact that $q<n-4$.

We see that $w$ lies in exactly two elements of $\{g\cdot C_n\,:\,g\in \Sigma\}$. 
We denote by $w_0,\dots, w_{n-1}$ the vertices of $1\cdot C_n$ in cyclic order and we denote $t_i=\type(w_i)$.
Up to the left action of $A_{C_n}$ on $C_n^{\ext}$ and reversing the ordering of $w_0,\dots,w_{n-1}$, we may assume $w=w_1$ and the other endpoint of $e$ is $w_0$.
The neighbours of $w$ in $C$ are $w_2$ and $g\cdot w_2=t_0^b\cdot w_2$. See Figure~\ref{fig:doubling_C_m}.

\begin{figure}[ht]
\includegraphics{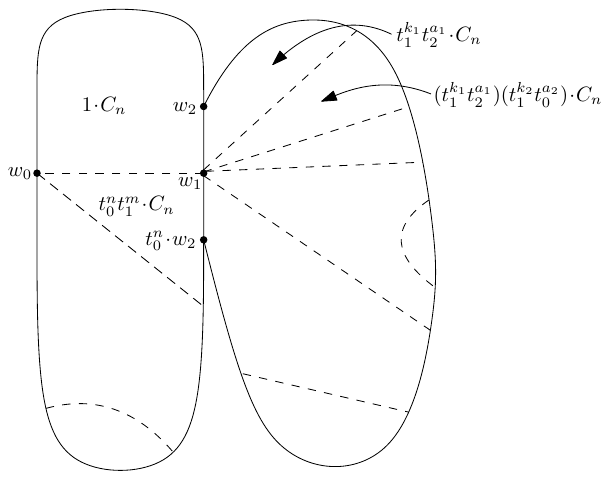}\centering 
\caption{The subgraphs $C$ on the left and $C'$ on the right, intersecting in $\{w_2,w_1,t_0^n\cdot w_2\}$.} \label{fig:doubling_C_m}
\end{figure}

As $C$ is part of the image of a combinatorial embedding of $C_m^{\ext}$ in $C_n^\ext$, there is an image $C'$ of another copy of $C_m$ that intersects $C$ exactly on the path $(w_2,w_1,t_0^n\cdot w_2)$. See Figure~\ref{fig:doubling_C_m}.
As explained for $C$, the cycle $C'$ corresponds to some $\sigma'=\sum_{i=1}^{q+1} h_i\cdot C_n\in H_1(C_n^{\ext},\sfrac{\Z}{2\Z})$. Up to renumbering the $h_i$, we may assume that $(w_1,w_2)\in h_1\cdot C_n$, that $(w_1,t_0^n\cdot w_2)\in h_k\cdot C_n$, and that $h_{i+1}\cdot C_n$ intersects $h_{i}\cdot C_n$ on an edge containing $w_1$ for each $i<k$. This can be seen by considering all edges in $\bigcup_{i=1}^{q+1} h_i\cdot C_n$ containing $w_1$ that are not in $C'$. Note that we must have $k>1$, because no translate of $C_n$ inside $C_n^\ext$ has two vertices of type $t_2$.

As $C$ contains all vertices of $\bigcup g_i\cdot C_n$, we have $(h_1\cdot C_n)\cap(1\cdot C_n)=(w_1,w_2)$, hence $h_1=t_1^{k_1}t_2^
{a_1}$ for $k_1,a_1\neq 0$. 
We then have by induction that $h_\ell = (t_1^{k_1}t_2^{a_1})(t_1^{k_2}t_0^{a_2})(t_1^{k_3}t_2^{a_3})\cdots(t_1^{k_\ell}t_\epsilon^{a_\ell})$ for $a_i,k_i\neq 0$, where $\epsilon\in \{0,2\}$ depends on the parity of $\ell$.

By definition, $t_0^n\cdot w_2\in h_k\cdot C_n$ and so $h_k\cdot w_2=t_0^n\cdot w_2$. In other words, $h_kt_2h_k^{-1}=t_0^nt_2t_0^{-n}$. From the above description of $h_\ell$, we have $h_kt_2h_k^{-1}=(t_2^{a_1}t_0^{a_2}\cdots t_\epsilon^{a_k})t_2(t_2^{a_1}t_0^{a_2}\cdots t_\epsilon^{a_k})^{-1}$.
If $\epsilon=0$ then this expression is reduced and cannot equal $t_0^nt_2t_0^{-n}$. If $\epsilon=2$ then we have $t_2^{h_k}=(t_2^{a_1}t_0^{a_2}\cdots t_0^{a_{k-1}})t_2(t_2^{a_1}t_0^{a_2}\cdots t_0^{a_{k-1}})^{-1}$ and this expression is reduced and cannot equal $t_0^nt_2t_0^{-n}$.
\end{proof}

We get as a corollary the ``only if'' direction of \cref{mthm:cycles_qie}.

\begin{corollary} \label{cor:necessary}
Let $m,n\geq 4$. If there exists a quasiisometric embedding $A_{C_m}\to A_{C_n}$, then either $m=n$ or there exists $p\geq 1$, $q\geq 0$ such that $m=n+p(n-4)+q(n-2)$.
\end{corollary}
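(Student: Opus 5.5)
The argument combines \cref{thm:qie_induces} with \cref{prop:cycles_equation}, after disposing of the small cases $m=4$ or $n=4$, where the hypotheses of \cref{thm:qie_induces} fail.

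\textbf{Main case $m,n>4$.} Here $C_m$ and $C_n$ are triangle-free, and $C_m$ has no leaves. Also $C_n$ is square-free, since for $n\ge5$ the cycle $C_n$ contains no induced $C_4$. So \cref{thm:qie_induces} applies: a quasiisometric embedding $A_{C_m}\to A_{C_n}$ induces a combinatorial embedding $C_m^\ext\to C_n^\ext$. \cref{prop:cycles_equation} then gives either $m=n$ or $m=n+p(n-4)+q(n-2)$ with $p\ge1$, $q\ge0$, as required.

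\textbf{Case $n=4$.} The condition reads $m=4+0+2q$, so we need $m$ to be even. Note that $A_{C_4}=F_2\times F_2$, whose asymptotic cones are products of two trees. I would argue that $A_{C_m}$ for odd $m\ge5$ admits no quasiisometric embedding into a product of two trees. The planned route is the result of Rull cited in \cref{sec:building_qie}, which characterises which $A_{C_m}$ embed in $A_{C_4}$; I would cite it directly if it gives the necessary condition (even $m$). Failing that, I would use the fact that $A_{C_m}$ contains a quasiisometrically embedded $W_{C_m}$, hence a copy of $\mathbb H^2$ tiled by right-angled $m$-gons. A quasiisometric embedding into a product of two trees forces a coarse 2-colouring of the tiling's ``hyperplanes'' by which tree factor they map to. Transverse hyperplanes must receive different colours, and the hyperplanes around an $m$-gon alternate, which forces $m$ to be even. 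This is the main obstacle, and appealing to Rull is the cleaner choice.

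\textbf{Case $m=4$, $n\ge5$.} The condition would require $4=n+p(n-4)+q(n-2)>n$, which is false, and $m\ne n$. So we must show that $\Z^2$-rich $F_2\times F_2$ does not embed. For this I would use rank: $A_{C_4}$ contains quasi-flats of rank $2$ with branching, namely $F_2\times F_2$. In $A_{C_n}$ with $n\ge5$, every maximal product region is a $\Z\times\Z$ or a $\langle s\rangle\times F(\text{two non-adjacent neighbours})$, that is, $\Z\times F_2$. A quasiisometric embedding of $F_2\times F_2$ would need to land near a product of two unbounded-branching factors, for instance by the quasiflat/product-region rigidity of \cite{baderbensaidpetyt:from}. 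This fails because, in $C_n$, no two non-adjacent pairs of vertices are mutually joined (no induced $C_4$). I expect to cite that machinery, essentially the square-free hypothesis of \cref{thm:qie_induces}, rather than re-prove it.
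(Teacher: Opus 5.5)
\textbf{Cases $m,n>4$ and $m=4<n$.} Your main case $m,n>4$ is exactly the paper's argument. For $m=4<n$ the paper likewise just cites \cite[Cor.~7.9]{baderbensaidpetyt:quasiisometric:rigidity}, so your plan to cite that machinery is on par with it. There is also a shortcut here. As stated, \cref{thm:qie_induces} applies when $m=4$, since $C_4$ is leafless and triangle-free. Moreover, the bound $M(\sigma)\ge n$ from the proof of \cref{lem:embedding_of_cycle_in_ext} shows that $C_n^\ext$ contains no $4$-cycle. So this case needs no product-region discussion.

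\textbf{The gap: the case $n=4$.} The paper invokes \cite{rull:embedding} only for the existence direction, where even $m$ gives an embedding into a product of two trees. For the fact that $m$ cannot be odd it cites \cite[Thm~7.8]{baderbensaidpetyt:quasiisometric:rigidity}. Your conditional appeal to Rull therefore does not settle the case.

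Your fallback argument is wrong as stated. You pass to a quasiisometrically embedded copy of $\mathbb{H}^2$ coming from $W_{C_m}$, and claim that any quasiisometric embedding of it into a product of two trees coarsely $2$-colours the hyperplanes of the tiling. But $\mathbb{H}^2$ \emph{does} quasiisometrically embed in a product of two binary trees: by Buyalo--Dranishnikov--Schroeder, a hyperbolic group with $1$-dimensional boundary embeds in a product of two binary trees. Hence it embeds in $F_2\times F_2=A_{C_4}$ for every $m\ge5$, odd or even. Such an embedding need not send hyperplanes near either tree factor, so no parity information can be extracted from that subspace.

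Even working in all of $A_{C_m}$, ``transverse hyperplanes get different colours'' needs real input. A single flat can be mapped into $L_1\times L_2$ with its coordinate axes going to diagonals. What is needed is a rigidity statement for the whole group: standard geodesics are sent close to lines in a single tree factor, and commuting generators go to different factors. That would give a proper $2$-colouring of $C_m$, and hence force $m$ to be even. Citing \cite[Thm~7.8]{baderbensaidpetyt:quasiisometric:rigidity}, as the paper does, closes the case.
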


\begin{proof}
For $n=4$, \cite[Thm~7.8]{baderbensaidpetyt:quasiisometric:rigidity} shows that $m$ cannot be odd and all even $m\geq 4$ numbers are of the form $n$ or $n+(n-4)+q(n-2)$. 

If $n>4$, then $A_{C_4}$ cannot quasiisometrically embed in $A_{C_n}$, by \cite[Cor.~7.9]{baderbensaidpetyt:quasiisometric:rigidity}.
If $m,n>4$, then $C_n$ and $C_m$ are triangle-free and square-free, so \cref{thm:qie_induces} shows that any quasiisometric embedding $A_{C_m}\to A_{C_n}$ induces a combinatorial embedding $C_m^\ext\to C_n^\ext$. By \cref{prop:cycles_equation}, this implies that $m=n$ or $m=n+p(n-4)+q(n-2)$ for some $p\ge1$, $q\ge0$.
\end{proof}

\section{Proof of Theorem~\ref{mthm:cycles_qie}} \label{sec:qie_between_cycles}

In this short section, we combine the results of Sections~\ref{sec:building_qie} and~\ref{sec:cyclic_subgraphs_of_ext_graphs} to prove \cref{mthm:cycles_qie}, which we restate for convenience.

\numberedtheorem{Theorem~\ref{mthm:cycles_qie}}{}{
Let $m,n\ge 4$. There is a quasiisometric embedding $A_{C_m}\to A_{C_n}$ if and only if either $m=n$ or there exist $p\ge1$, $q\ge0$ such that $m=n+p(n-4)+q(n-2)$.
}

\begin{proof}
Corollary~\ref{cor:necessary} shows that it is necessary for $m$ and $n$ to satisfy the stated relation in order for a quasiisometric embedding $A_{C_m}\to A_{C_n}$ to exist. We show that one does exist in all cases where the relation is satisfied.

First consider the case where $n=4$. In this case, $m$ must be even.
If $m$ is even, then $C_m$ is 2--colourable, so the main result of \cite{rull:embedding} states that $A_{C_m}$ can be quasiisometrically embedded in a product of two finite-valence trees, which is quasiisometric to $F_2\times F_2=A_{C_4}$.

If $n=5$, then it was shown in \cite[Thm~1.12]{kimkoberda:embedability} that $A_{C_m}<A_{C_5}$ for all $m\ge5$, and moreover the proof therein, which uses \cite{clayleiningermangahas:geometry}, shows that $A_{C_m}$ appears as an undistorted subgroup of $A_{C_5}$. Similarly, if $n=6$, then \cite[Thm~1.12]{kimkoberda:embedability} shows that $A_{C_{2k}}$ is an undistorted subgroup of $A_{C_6}$ for all $k>2$. Alternatively, the cases $n\in\{5,6\}$ are both straightforward consequences of \cref{prop:induced_to_homo}.

For $n\ge7$,
Proposition~\ref{prop:building_cycles_qie} shows the existence of a quasiisometric embedding $A_{C_m}\to A_{C_n}$ for any $m$ as in the statement.
\end{proof}





We note that for different values of $p$ and $q$ we obtain quasiisometric embeddings that are not at finite distance from one another, because they correspond to different combinatorial embeddings of extension graphs. 



\section{Exotic quasiisometric embeddings}
\label{sec:exotic_when_q=0}

Let $m,n>4$ and suppose that $m=n+p(n-4)$ for some $p\ge1$. According to \cref{lem:homo}, the quasiisometric embedding $F:A_{C_m}\to A_{C_n}$ constructed in \cref{sec:building_qie} (for this decomposition of $m$, with $q=0$ in the notation of Section~\ref{sec:building_qie}) is at finite distance from a homomorphism. In this section we show that, even in this case, there are different quasiisometric embeddings that are not close to homomorphisms. 

It was observed in \cite[\S9]{baderbensaidpetyt:quasiisometric:rigidity} that every nonabelian right-angled Artin group has self-quasiisometries that are not close to homomorphisms, which deals with the case $m=n$. Moreover, \cite[Thm~9.22]{baderbensaidpetyt:quasiisometric:rigidity} gives a complete description of the quasiisometric embeddings $A_{C_n}\to A_{C_n}$, and many of these are not close to homomorphisms. They are nevertheless very rigid. Indeed, every such quasiisometric embedding induces a combinatorial embedding  $C_n^\ext\to C_n^\ext$ with the property that if two vertices have the same type, then their images also have the same type. For $p>0$, we shall construct quasiisometric embeddings $A_{C_m}\to A_{C_n}$ that are neither close to homomorphisms nor have this latter property.

\medskip

As shown in Lemma~\ref{lem:homo}, the map constructed in \cref{sec:building_qie} is close to the homomorphism taking the standard generator $s_i$ to $\type \iota_{+,1}(v_i)$ conjugated by the minimal label of the copy of $\Lambda$ making up the block $B^+(1)$ that it lies in, namely the word $t_2t_{-2}\dots$ of length $\dep (\iota_{+,1}(v_i))$. Denote this homomorphism by $\phi_1$.

A different homomorphism could be obtained by considering $\hat\tau\circ \phi_1\circ \hat\sigma$ where $\hat\sigma:A_{C_m}\to A_{C_m}$ is the automorphism that flips the cyclic order around 0, that is $\hat\sigma(s_i)=s_{-i}$, and similarly $\hat\tau:A_{C_n}\to A_{C_n}$ is defined by $\hat{\tau}(t_j)=t_{-j}$. Denote this homomorphism by $\phi_2$. For instance, we have 
\[
\phi_2(s_0)\,=\, t_0 \,=\, \phi_1(s_0), \quad \phi_2(s_{\pm1}) \,=\, t_{\pm1} \,=\, \phi_1(s_{\pm1}), 
\quad \phi_2(s_2) \,=\, t_2 \,\ne\, \phi_1(s_2).
\]
In fact, we have $\phi_2(s_i)=\phi_1(s_i)$ if and only if $i\in\{-1,0,1\}$. Moreover, $\phi_j(s_i)$ commutes with $t_0$ if and only if $i\in\{-1,0,1\}$.

Define a map $\hat\phi:A_{C_m}\to A_{C_n}$ by combining $\phi_1$ and $\phi_2$ as follows: if there is a reduced word representing $g$ that starts with $s_0^{\pm 1}$, then set $\hat\phi(g)=\phi_2(g)$; otherwise set $\hat\phi(g)=\phi_1(g)$.

\begin{proposition} \label{prop:no_homo_q0}
The map $\hat\phi:A_{C_m}\to A_{C_n}$ is a quasiisometric embedding that is not at finite distance from any map of the form $\psi_{C_n}\circ h\circ \psi_{C_m}$, where $\psi_{C_n}:A_{C_n}\to A_{C_n}$ and $\psi_{C_m}:A_{C_m}\to A_{C_m}$ are quasiisometric embeddings and $h:A_{C_m}\to A_{C_n}$ is a homomorphism.
\end{proposition}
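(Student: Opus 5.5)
We split the proof into two parts: first that $\hat\phi$ is a quasiisometric embedding, then that it is not quasiconjugate to a homomorphism.

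\textbf{Quasiisometric embedding.} Write $X\subseteq A_{C_m}$ for the set of elements admitting a reduced word beginning with $s_0^{\pm1}$, and $Y$ for its complement. Both $\phi_1$ and $\phi_2$ are non-distorting homomorphisms: $\phi_1$ is close to the map $F$ of \cref{prop:building_cycles_qie} by \cref{lem:homo}, and $\phi_2=\hat\tau\circ\phi_1\circ\hat\sigma$ with $\hat\tau,\hat\sigma$ automorphisms. Hence $\hat\phi$ restricted to $X$ or to $Y$ is a quasiisometric embedding. It remains to compare $g\in X$ with $g'\in Y$.

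The key observation is the following. An element $g\in X$ can be written $g=s_0^ag_1$, where $s_0^a$ absorbs all leading letters from $\{s_0\}$. Because $\phi_1$ and $\phi_2$ agree on $s_{-1},s_0,s_1$, and these letters generate the commuting set at the start, we get $\phi_2(g)=\phi_1(s_0^a)\,\phi_2(g_1)$. I would set this up via the product decomposition of the star: $\star(v_0)$ gives a standard coset decomposition, and $X$ consists of elements whose normal form has a nontrivial $s_0$-syllable that can be moved to the front.

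For $g\in X$ and $g'\in Y$, let $g_0$ be their maximal common prefix. Then $g_0$ contains no $s_0$ at the front, so $g_0$ lies in $\langle s_{-1},s_1\rangle$-type commuting letters, or is trivial, or begins with some other letter. In every case the geodesic from $g'$ to $g$ passes through an element $g_0$ on which the two homomorphisms agree up to the bounded-distance issue. Concretely, I would prove the following:

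\begin{itemize}
\item $\hat\phi$ is coarsely Lipschitz, by checking adjacent pairs. If $g\in Y$ and $gs\in X$, then $g\in\langle s_{-1},s_1\rangle$, where $\phi_1=\phi_2$.
\item $d(\phi_1(g_0 x),\phi_2(g_0 y))\ge c\,(|x|+|y|)-C$ for suitable tails $x,y$.
\end{itemize}

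For the second point, I would mimic the proof of \cref{prop:building_cycles_qie}. The image tails under $\phi_1$ and $\phi_2$ start with syllables whose types are distinguished, since $\phi_j(s_i)$ commutes with $t_0$ exactly when $i\in\{-1,0,1\}$. This limits the common prefix of the images to a bounded extension of $\phi(g_0)$.

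\textbf{Main obstacle.} I expect this mixed-pair colipschitz estimate to be the main obstacle. The plan is to use $t_0$ as a separating letter: the $\phi_2$-tail begins with $t_0^{a}$, while after $\phi_1(g_0)$ the $\phi_1$-tail has first letter not commuting with $t_0$ unless it comes from $s_{\pm1}$. The cancellation between the two tails should therefore be bounded by $\max|\ell|$.

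\textbf{Not quasiconjugate to a homomorphism.} Suppose $\hat\phi$ were at finite distance from $\psi_{C_n}\circ h\circ\psi_{C_m}$. By \cref{thm:qie_induces}, each of $\psi_{C_n}$, $\psi_{C_m}$, $h$ and $\hat\phi$ induces an embedding of extension graphs, and finite distance forces equal induced maps. By \cite[Thm~9.22]{baderbensaidpetyt:quasiisometric:rigidity}, the self-embeddings $\psi$ induce type-preserving maps, in the sense that equal types go to equal types. A homomorphism satisfies the type condition of \cite[Lem.~9.2]{baderbensaidpetyt:quasiisometric:rigidity}: for each $v$ there is a $w$ with $f(g\cdot v)\in A_{C_n}\cdot w$ for all $g$. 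So the composite would induce a map sending all vertices of type $s_i$ to a single type, for each $i$.

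For $\hat\phi$ this fails. The vertex $v_2\in 1\cdot C_m$ maps to a vertex of type $\type\phi_1(s_2)$, while $s_0\cdot v_2$ maps to a vertex of type $\type\phi_2(s_2)=t_2$. Since these types differ, we get a contradiction. The only remaining task is to check that $\hat\phi$ really induces $f$ on these standard geodesics, which follows directly from its definition on $s_0$-cosets.
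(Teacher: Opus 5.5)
Your Lipschitz check and your non-quasiconjugacy argument follow the paper. For the latter, do record the computation: $\hat\phi(s_2^k)=\phi_1(s_2^k)=t_2t_0^kt_2^{-1}$ lies near a geodesic of type $t_0$, while $\hat\phi(s_0s_2^k)=\phi_2(s_0s_2^k)=t_0t_2^k$ lies on one of type $t_2$.

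The genuine gap is the mixed-pair lower bound. You only sketch it, and the sketch does not close, for two reasons.

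First, with $g_0$ the maximal common prefix, your option ``or begins with some other letter'' must be excluded, not just listed. If it occurred, $\phi_1(g_0)$ and $\phi_2(g_0)$ would be unboundedly far apart (compare $t_2t_0^kt_2^{-1}$ with $t_2^k$), and the pivot would be useless. It cannot occur. Every letter of $g$ of type outside $\{s_{-1},s_0,s_1\}$ fails to commute with $s_0$, so it sits after the leading $s_0$. Hence a prefix of $g$ not beginning with $s_0$ lies in $\langle s_{-1},s_1\rangle$. You need to say this.

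Second, and more seriously, write $g=g_0y$, $g'=g_0x$ and $y=s_0^ay'$. You need $\phi_2(y)$ and $\phi_1(x)$ to have bounded common prefix. Your separating-letter argument fails exactly in the case you flag, namely when $x$ begins with $s_{\pm1}$. Then $\phi_1(x)$ begins with $t_{\pm1}$, which commutes with $t_0$ and slides past $t_0^{-a}$ in $\phi_2(y)^{-1}\phi_1(x)$. Stopping it from cancelling against $\phi_2(y')^{-1}$ needs a further fact: that $\phi_2$ never creates a leading $t_{\pm1}$ from a word not beginning with $s_{\pm1}$. You neither state nor prove this, so the bound by $\max|\ell|$ is unsupported.

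The paper sidesteps this case by pivoting elsewhere. Take $g_1\in Y$ and $g_2\in X$, write $g_2=s_0^ag_2'$ with $a$ maximal, and write $g_1=g_0g_1'$ with $g_0$ the \emph{maximal prefix of $g_1$ commuting with $s_0$}. This $g_0$ is generally longer than the meet.

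Then $d(g_1,g_2)=|g_2'^{-1}g_0|+|s_0^{-a}g_1'|$, so a geodesic from $g_2$ to $g_1$ passes through $s_0^ag_0$ and $g_0$. Since $\phi_1,\phi_2$ are homomorphisms agreeing on $\langle s_{-1},s_0,s_1\rangle$, this gives $\hat\phi(g_2)^{-1}\hat\phi(g_1)=\phi_2(g_2'^{-1}g_0)\,\phi_1(s_0^{-a}g_1')$.

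Every troublesome $s_{\pm1}$-letter is now absorbed into the $\phi_2$-factor, and $g_1'$ begins with none of $s_{-1},s_0,s_1$. So $\phi_1(s_0^{-a}g_1')$ has unique first letter $t_0^{\pm1}$, while $\phi_2(g_2'^{-1}g_0)$ does not end in a power of $t_0$. The product is therefore reduced and lengths add. Applying the $(K,A)$ bounds for $\phi_1$ and $\phi_2$ to each factor gives $d(\hat\phi(g_1),\hat\phi(g_2))\ge\frac1K d(g_1,g_2)-2A$.

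Replacing your meet-based pivot with this one closes the gap. It also reduces what you need about images to two narrow first-letter and last-letter facts.
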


\begin{proof}
By \cite[Lem.~9.2]{baderbensaidpetyt:quasiisometric:rigidity} and \cite[Prop.~9.10]{baderbensaidpetyt:quasiisometric:rigidity}, for every standard generator $s$ of $A_{C_m}$ there is a standard generator $t_s$ of $A_{C_n}$ such that $\psi_{C_n}\circ h\circ \psi_{C_m}$ maps standard geodesics of type $s$ within finite Hausdorff distance of standard geodesics of type $t_s$. If $\hat \phi$ was at finite distance from such a map, it would satisfy this property as well. But note that $\hat\phi(s_2^n)=t_2t_0^nt_2^{-1}$, whereas $\hat\phi(s_0s_2^n)=t_0t_2^n$.


It remains to show that $\hat\phi$ is a quasiisometric embedding. By \cref{prop:building_cycles_qie} and the fact that $\hat{\sigma}$ and $\hat\tau$ are isometries, both $\phi_1$ and $\phi_2$ are quasiisometric embeddings. Let $(K,A)$ be common quasiisometric constants. We will show that $\hat{\phi}$ is a $(K,2A)$--quasiisometric embedding.

For the upper bound, consider $g\in A_\Gamma$ and $s\in S^{\pm1}$. If $\hat\phi(g)=\phi_i(g)$ and $\hat\phi(gs)=\phi_i(gs)$ for some $i\in \set{1,2}$, then $\dist(\hat\phi(g),\hat\phi(gs))\leq K+A$. 

If $\hat\phi(g)=\phi_2(g)$ and $\hat\phi(gs)=\phi_1(gs)$, then $s$ must cancel with an initial $s_0^a$ syllable of $g$, so we must have $g=s^{-1}g'$, where $s=s_{0}^{\pm1}$ commutes with $g'$ and $g'$ cannot start with $s_0^{\pm 1}$. This implies that only $s_1$ and $s_{-1}$ appear in syllable-reduced expressions for $g'$. 
By definition, $\phi_1(g)=\phi_2(g)$ and $\phi_1(gs)=\phi_2(gs)$, so $\dist(\hat\phi(g),\hat\phi(gs))\leq K+A$.

Similarly, if $\hat\phi(g)=\phi_1(g)$ and $\hat\phi(gs)=\phi_2(gs)$, then we must have $s=s_0^{\pm 1}$ and $g$ is built out of $s_{1}^{\pm 1}$ and $s_{-1}^{\pm 1}$. Again, $\phi_1(g)=\phi_2(g)$ and $\phi_1(gs)=\phi_2(gs)$ and the upper bound was established.
We get that $\hat\phi$ is Lipschitz. 

For the lower bound, let $g_1,g_2\in A_\Gamma$. As before, if $\hat\phi(g_1)=\phi_i(g_1)$ and $\hat\phi(g_2)=\phi_i(g_2)$ for the same $i\in \set{1,2}$, then $\dist(\hat\phi(g_1),\hat\phi(g_2))\geq \frac{1}{K}\dist(g_1,g_2)-A$.
Otherwise, without loss of generality $\hat\phi(g_1)=\phi_1(g_1)$ and $\hat\phi(g_2)=\phi_2(g_2)$. 
This means that $g_2$ can be written as a syllable-reduced word $s_0^ag_2'$ where $a\neq 0$ and $g_1$ cannot start with $s_0^{\pm 1}$. Let $g_1=g_0g_1'$ where $g_0$ is the maximal prefix that commutes with $s_0$. As $g_0$ commutes with $s_0$, we have that $\phi_1(g_0)=\phi_2(g_0)$.

Observe that 
\[
\dist(g_1,g_2) \,=\, |g_2^{-1}g_1| \,=\, |g_2'^{-1}g_0s_0^{-a}g_1'| \,=\, |g_2'^{-1}g_0|+|s_0^{-a}g_1'|.
\]
We will show that a similar thing happens after applying $\hat\phi$ and then use the fact that $\phi_1(g_0)=\phi_2(g_0)$ commutes with $\phi_1(s_0^a)=\phi_2(s_0^a)$ to establish the lower bound.

We have $\hat\phi(g_2)^{-1}\hat\phi(g_1)=\phi_2(g_2)^{-1}\phi_1(g_1)$ by assumption. (Note that it may not be true that $\hat\phi(g_2)^{-1}=\hat\phi(g_2^{-1})$, but we are not claiming this.) Both $\phi_1$ and $\phi_2$ are homomorphisms, so 
\begin{equation*}
    \begin{split}
        \hat\phi(g_2)^{-1}\hat\phi(g_1) \,&=\, \phi_2(g_2'^{-1})\phi_2(s_0^{-a})\phi_1(g_0)\phi_1(g_1') \\
        &=\, \phi_2(g_2'^{-1})\phi_2(g_0)\phi_1(s_0^{-a})\phi_1(g_1') \\ 
        &=\, \phi_2(g_2'^{-1}g_0)\phi_1(s_0^{-a}g_1').
    \end{split}
\end{equation*}
As $g_0$ is a word in $s_1$ and $s_{-1}$, we have that $g_2'^{-1}g_0$ cannot end in $s_0^{\pm 1}$ by the choice of $a$. So $\phi_2(g_2'^{-1}g_0)$ cannot end in $t_0$. On the other hand, $g_1'$ cannot start with $s_{-1}$, $s_0$, or $s_{1}$ as $g_0$ is a maximal prefix that commutes with $s_0$. Hence $\phi_1(s_0^{-a}g_1')$ has a unique first letter $t_0$. This shows that $\phi_2(g_2'^{-1}g_0)\phi_1(s_0^{-a}g_1')$ is reduced, and so 
\begin{equation*}
    \begin{split}
        |\phi_2(g_2'^{-1}g_0)\phi_1(s_0^{-a}g_1')| \,&=\, |\phi_2(g_2'^{-1}g_0)|+ |\phi_1(s_0^{-a}g_1')| \\
        &\geq\, \frac{1}{K}\dist(g_2',g_0)-A+\frac{1}{K}\dist(s_0^{-a}g_1')-A \\
        &=\, \frac{1}{K}\dist(g_1,g_2)-2A,
    \end{split}
\end{equation*}
which establishes the lower bound.
\end{proof}

\section{Cyclic graph products of finite or cyclic groups} \label{sec:cyclic_graph_products}

Here we show that by making small tweaks to the constructions of Section~\ref{sec:building_qie} we can obtain quasiisometric embeddings between more than just right-angled Artin groups. Specifically, we consider graph products of finite groups where the underlying graphs are cycles of length greater than six. As commented on in \cref{rem:n_more_than_six}, we believe that the arguments of Section~\ref{sec:building_qie} can be modified to apply to all cycles of length greater than four, and such modifications should carry through to our considerations here. See also \cref{rem:refinements}.

Let $m,n>6$ be distinct integers such that we can fix an integral expression $m=n+p(n-4)+q(n-2)$, with $p\ge1$ and $q\ge0$, as in Section~\ref{sec:building_qie}. We start by making two observations about the arguments in that section. The first shows that Proposition~\ref{prop:roots_structure} and Lemma~\ref{lem:prefixes} simplify in the case $q=0$.

\begin{lemma} \label{obs:no_glides}
Suppose that $q=0$. If $\gamma$ is a lazy word representing an element $g\in A_{C_m}$, then $F(\gamma)$ is syllable-reduced. Moreover, if $h\in A_{C_m}$ is an $A_{C_m}$-prefix of $g$, then $F(\gamma)$ can be shuffled to obtain a representative word $\mu$ of $F(g)$ with an initial subword that minimally represents $F(h)$. 
\end{lemma}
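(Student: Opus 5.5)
The plan is to read both statements off Proposition~\ref{prop:roots_structure} and Lemma~\ref{lem:prefixes}, using the observation that when $q=0$ the blocks have no shortcuts at all.

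\textbf{Step 1: no shortcuts.} By Item~\ref{sh:shortcuts}, the set $\short(B)$ is built from the final $q$ stages of the construction of $B^\eps(h)$. So when $q=0$ we have $\short(B)=\emptyset$ for every block $B$. Consequently every step in Item~\ref{sh:f_construction} is a doubling and never a glide, and in Item~\ref{sh:roots} we always have $b_c=a_c$.

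\textbf{Step 2: $F(\gamma)$ is syllable-reduced.} Proposition~\ref{prop:roots_structure} says that a lazy word $\lambda=F(\gamma)$ can fail to be syllable-reduced only if some label $\ell_c$ ends with two letters $\tau_1,\tau_2$ corresponding to a shortcut pair in $f(\gamma_{c-1}\cdot\Gamma)$. Since there are no shortcut pairs, this configuration never occurs. Hence $\lambda=\lambda'$, and $\lambda$ is syllable-reduced.

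I should check that the hypothesis of Proposition~\ref{prop:roots_structure} is not vacuously stronger than needed, i.e.\ that its case analysis does not implicitly use $q>0$. Rereading it, the only obstruction produced in Cases 2.a--2.c arises from Case 2.a, which requires $\omega_\tau\in\short\beta_d$. Cases 2.b and 2.c either reduce to 2.a or produce no obstruction. Case 1 produces none. So with $q=0$ no obstruction arises at all.

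\textbf{Step 3: prefixes.} Follow the proof of Lemma~\ref{lem:prefixes}. Let $h$ have syllable length $e$, and pick $r$ with its excess syllables. Shuffle each excess type-$T$ syllable $t_x^{b_x}$ past the later non-excess blocks $\ell_yt_y^{b_y}$; this is legitimate by the commutation statement of Item~\ref{sh:rep_F_change}. The result is a word $\eta$ obtained from $\lambda=F(\gamma)$ purely by shuffling, so $\eta$ has an initial subword $\nu$ representing $F(h)$.

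One detail needs care: the last syllable of $h$ may be a proper power $t_{e+r}^{a'_{e+r}}$ of the corresponding syllable of $\gamma$. Here the simplification helps, since $b=a$ exactly. So we split $t_{e+r}^{a_{e+r}}$ as $t_{e+r}^{a'_{e+r}}t_{e+r}^{a_{e+r}-a'_{e+r}}$ and end $\nu$ after the first factor.

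Set $\mu=\eta$. By Step 2, $\mu$ has length $|F(g)|$, because shuffling preserves length and $\lambda$ is minimal. An initial subword of a minimal word is minimal, so $\nu$ minimally represents $F(h)$.

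\textbf{Main obstacle.} The main point to verify is that the argument of Lemma~\ref{lem:prefixes} needs no cancellation step. This holds because the pair $(c_0,k_0)$ only exists in the presence of shortcuts, so $\lambda''=\lambda$ and $\eta''=\eta$.
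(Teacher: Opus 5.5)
Your proposal is correct and follows the paper's own proof. With $q=0$ there are no shortcuts, hence no glides and $b_c=a_c$, so the obstruction in Proposition~\ref{prop:roots_structure} never arises and $F(\gamma)$ is syllable-reduced. The opening of the proof of Lemma~\ref{lem:prefixes} then gives the prefix statement: shuffle the excess type-$T$ syllables, take $\mu=\eta$, and use that an initial subword of a minimal word is minimal.

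You give more detail than the paper does, and one small addition is needed. If the final two syllables of $h$ commute, both may be truncated relative to $g$ (e.g.\ $h=s_1^2s_2^3$ and $g=s_1^5s_2^5$ with $s_1,s_2$ commuting). The penultimate one is handled by the same splitting: move the leftover power of its $T$-syllable to the right, using the commutation from Item~\ref{sh:rep_F_change}.
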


\begin{proof}
If $q=0$, then there are no shortcuts or glides involved in the constructions of the maps $f$ and $F$ in Section~\ref{sec:building_qie}. The arguments of that section therefore simplify: we do not need Lemmas~\ref{lem:stay_shortcut} or~\ref{lem:gliding_commutes}, for instance. 

The statement that $F(\gamma)$ is syllable-reduced can be seen by following the proof of Proposition~\ref{prop:roots_structure} and using the fact that there are no glides: as noted in the second paragraph thereof, adding a new final syllable can only cause a failure to be syllable-reduced if there is a glide. See Cases~\hyperlink{structure_case_1}{1} and~\hyperlink{structure_case_2c}{2.c}.

The statement about prefixes of $g$ can thus be seen by following the start of the proof of Lemma~\ref{lem:prefixes} and using the fact that $F(\gamma)$ is syllable-reduced. Indeed, if $F(\gamma)$ is syllable-reduced then its length is equal to $|F(g)|$, so, as stated in the fourth paragraph of the proof of that lemma, some shuffle of $F(\gamma)$ has an initial subword minimally representing $F(h)$.
\end{proof}

\begin{observation} \label{obs:labels_powers}
Regardless of the value of $q$, Proposition~\ref{prop:roots_structure} characterises the ways in which $F(\gamma)$ can fail to be syllable-reduced, where $\gamma$ is a lazy representative of an element $g\in A_{C_m}$ and $F:A_{C_m}\to A_{C_n}$ is the map constructed in Item~\ref{sh:roots}. In particular, this requires there to be a shortcut ``on the way to $g$''. 

When $F(\gamma)$ does fail to be syllable-reduced, the failure is caused by a pair of basic relative labels $\ell_c$ and $\ell_{c+k+1}$ that appear in the expression for $F(\gamma)$, as described in Proposition~\ref{prop:roots_structure}. If we were to change the powers of the letters appearing in the basic relative labels, then the only change to the description of failures of being syllable-reduced in Proposition~\ref{prop:roots_structure} would be in the description of the powers that appear in $\ell_c$ and $\ell_{c+k+1}$. The construction of $\lambda'$ would then involve shuffling the final syllable of $\ell_c$, rather than just the final letter (the two are the same in the setting of Proposition~\ref{prop:roots_structure}), and the exponents may not cancel.
\end{observation}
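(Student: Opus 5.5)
The statement is an observation, so I will justify it by rereading the proof of Proposition~\ref{prop:roots_structure} and tracking exactly where the exponents of letters inside basic relative labels are used. The proof there splits into Case~\hyperlink{structure_case_1}{1} ($\ell_d=1$) and Case~\hyperlink{structure_case_2}{2} ($\ell_d\ne1$), with subcases 2.a--2.c.

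First I check that every case before 2.a reaches its conclusion using only commutation data. These data are: whether $\tau$ commutes with $t_{d-1}$, whether a vertex is a shortcut, the depth comparisons, and laziness. None of this depends on the powers with which the letters of $\ell_c$ appear. The relevant facts are the following.
\begin{itemize}
\item Syllables of a word in $A_\Lambda$ can interact only through commuting or equal generators.
\item Minimality of $\ell_d$ guarantees that a final syllable of $\ell_d$ does not commute with $t_d$; this is a statement about generators, not exponents.
\end{itemize}
So Cases~\hyperlink{structure_case_1}{1}, \hyperlink{structure_case_2b}{2.b} and~\hyperlink{structure_case_2c}{2.c} go through verbatim once ``letter'' is read as ``syllable'' throughout. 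The same holds for the opening claim that every failure needs a glide, hence a shortcut, on the way to $g$.

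Next I examine Case~\hyperlink{structure_case_2a}{2.a}, the only place where an actual obstruction is produced. There the first two syllables of $\ell_{c+k+1}$ are powers of $\tau_1$ and $\tau_2$, and the last two syllables of $\ell_c$ are powers of the shortcut pair. The intervening $t_c,\dots,t_{c+k}$ commute with $\tau_1$ but not with $\tau_2$. This conclusion is again purely combinatorial. The only changes are:
\begin{itemize}
\item the exponents $\epsilon_c,\epsilon_{c+k+1}\in\{\pm1\}$ become arbitrary nonzero integers determined by the modified labels;
\item the repair step shuffles the final $\tau_1$-syllable of $\ell_c$ rightward and merges it with the initial $\tau_1$-syllable of $\ell_{c+k+1}$.
\end{itemize}
This merge is a full cancellation only if the exponents are opposite. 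Otherwise it just reduces the syllable count while the word length changes by a bounded amount.

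The main obstacle, though a mild one, is that the proof of Proposition~\ref{prop:roots_structure} also asserts that distinct obstructions do not interfere. That argument uses the ordering $c_1+k_1<c_2$ of obstruction pairs, and this ordering depends only on types and adjacency. So I expect it to transfer unchanged. I will verify it by checking that merging syllables rather than cancelling letters never creates a new adjacency between the commuting classes involved.
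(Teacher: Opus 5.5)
Your proposal is correct and takes essentially the paper's route: the paper gives no separate argument for this observation, which rests on the fact that the case analysis in the proof of Proposition~\ref{prop:roots_structure} uses only types, adjacency, depth and laziness, and never the exponents of the letters in basic relative labels. One small correction: a merge is a full cancellation exactly when the two $\tau_1$-exponents are negatives of each other, not merely of opposite sign; since such full cancellations already occur in the original $\pm1$ setting, and merges or partial cancellations leave a $\tau_1$-syllable in place and so create no new adjacency, the non-interference claim transfers directly.
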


Let $L=\{2,3,4,\dots,\infty\}$. For a graph $\Gamma$, let $L^\Gamma$ denote the set of vectors $\bar N=(N_v)_{v\in\Gamma}$ with $N_v\in L$ for all $v$. Given $\bar N\in L^\Gamma$, let $N^+=\max\{N_v\,:\,v\in\Gamma\}\in L$, and let $N^-=\min\{N_v\,:\,v\in\Gamma\}\in L$. If $N^-<\infty$ then let $N'=\max\{N_v\,:\,N_v<\infty\}$. 

We define $A_\Gamma(\bar N)$ to be the congruence quotient of $A_\Gamma$ obtained by declaring each standard generator $s_v$ to have order $N_v$. In other words, $A_\Gamma(\bar N)$ is the graph product defined by putting $\sfrac{\Z}{N_v\Z}$ on each vertex of the graph $\Gamma$, where we here regard $\sfrac\Z{\infty\Z}=\Z$. 

The standard generating set of $A_\Gamma$ induces a set $S$ of generators of $A_\Gamma(\bar N)$. For $g\in A_\Gamma(\bar N)$, we write $|g|$ for the word-length of $g$ with respect to these generators. We shall say that a word $\gamma$ representing $g\in A_\Gamma(\bar N)$ is \emph{syllable-reduced} if it has the form $s_1^{n_1}\dots s_k^{n_k}$, with $k$ the syllable-length of the element of $A_\Gamma$ represented by $\gamma$, and where $n_i\in\{1,\dots,N_v-1\}$ whenever $s_i=s_v$ has $N_v<\infty$.

We also have natural semigroup generators $\{s_v^{\pm1}\,:\,N_v=\infty\}\cup\{s_v\,:\,N_v<\infty\}$. If $\|g\|$ denotes the word-length of $g$ with respect to these semigroup generators, then
\[
|g| \,\le\, \|g\| \,<\, N'|g|.
\]
Every $g\in A_\Gamma(\bar N)$ has a unique lift $\hat g\in A_\Gamma$ with the property that $\hat g$ has a syllable-reduced expression whose $s_v$-exponents all lie in $\{1,\dots,N_v-1\}$ if $N_v<\infty$. From the above expression, we have $|g|\le|\hat g|\le N'|g|$.

Let $\Gamma^\ext(\bar N)$ denote the quotient of $\Gamma^\ext$ by the action of $\ker(A_\Gamma\to A_\Gamma(\bar N))$. The graph $\Gamma^\ext(\bar N)$ has an obvious section to $\Gamma^\ext$, consisting of all vertices $g\cdot v$ such that $g\in A_\Gamma$ can be written as a syllable-reduced word $s_1^{a_1}\dots s_d^{a_d}$ such that $a_i\in\{1,\dots,N_v-1\}$ for all $i$ such that $s_i=s_v$ with $N_v<\infty$. By an abuse of notation, we identify $\Gamma^\ext(\bar N)$ with its image under this section and view $A_\Gamma(\bar N)$ as acting on its image in $\Gamma^\ext$. 



We can now prove our main result on graph products of cyclic groups. We will deduce a similar statement for graph products of finite groups in \cref{cor:graph_prod_finite}.

\begin{theorem} \label{thm:cyclic_product_cyclic}
Suppose that $m,n>6$ are integers with $m=n+p(n-4)+q(n-2)$ for some integers $p\ge1$ and $q\ge0$. Let $\bar M\in L^{C_m}$, $\bar N\in L^{C_n}$. If any of the following hold, then there is a quasiisometric embedding $A_{C_m}(\bar M)\to A_{C_n}(\bar N)$.
\begin{itemize}
\item   $N^-=\infty$.
\item   $M^+<N^-$.
\item   $M^+=N^-<\infty$ and $q=0$.
\end{itemize}
Moreover, if $q=0$ and $N^-=N^+=kM^-=kM^+$ for some positive integer $k$, then $A_{C_m}(\bar M)$ is an undistorted subgroup of $A_{C_n}(\bar N)$.
\end{theorem}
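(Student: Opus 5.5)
I will build the quasiisometric embedding by \emph{restricting} the map $F:A_{C_m}\to A_{C_n}$ of Item~\ref{sh:roots} to the canonical lifts and then \emph{projecting} to the graph products. For $g\in A_{C_m}(\bar M)$ let $\hat g\in A_{C_m}$ be its canonical lift, whose syllable exponents lie in $\{1,\dots,M_v-1\}$ when $M_v<\infty$. Let $\pi:A_{C_n}\to A_{C_n}(\bar N)$ be the quotient. Define $\Phi(g)=\pi(F(\hat g))$.

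To see that this is well defined and compatible with lazy words, note that each exponent $b_c$ appearing in $F(\gamma)=\ell_1t_1^{b_1}\cdots\ell_dt_d^{b_d}$ satisfies $b_c\in\{a_c,a_c+1\}$ for $a_c\ge1$. The labels $\ell_c$ have all exponents $\pm1$. Under each of the three hypotheses I claim $\pi$ sends the lazy representative word $F(\gamma)$, after the cancellations of Proposition~\ref{prop:roots_structure}, to a syllable-reduced word of $A_{C_n}(\bar N)$, with no exponent vanishing modulo $N_w$. The cases are as follows.
\begin{itemize}
\item If $N^-=\infty$, this is immediate, since $A_{C_n}(\bar N)=A_{C_n}$ metrically up to $\pi$ being the identity.
\item If $M^+<N^-$, then $1\le b_c\le M^+<N^-$, so every exponent is nonzero modulo $N$. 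The label letters have exponent $\pm1$, and $-1\not\equiv0$ because $N\ge2$.
\item If $M^+=N^-<\infty$ and $q=0$, there are no glides, so $b_c=a_c\le M^+-1$ and again nothing vanishes.
\end{itemize}
One subtlety is that negative exponents in labels, such as $\tau^{-1}$, become $\tau^{N-1}$. They still form a single nontrivial syllable, so syllable-reducedness, which is a normal form statement for graph products (the analogue of Proposition~\ref{prop:shuffling}, due to Green), is preserved. Moreover, Observation~\ref{obs:labels_powers} says that changing powers only affects whether the paired final and initial label syllables of Proposition~\ref{prop:roots_structure} cancel. Such a pair at worst becomes a merged syllable, costing a bounded amount.

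Next I will rerun the proofs of Lemma~\ref{lem:prefixes} and Proposition~\ref{prop:building_cycles_qie} for syllable length. The key point is that in a graph product of cyclic groups word length is comparable to the semigroup length, with constant $N'$ (the inequality $|g|\le\|g\|<N'|g|$). The argument of Proposition~\ref{prop:building_cycles_qie} really compares common prefixes and counts syllables beyond the prefix: each syllable of $g_k$ outside $g_0$ contributes a nontrivial syllable of $\Phi(g_k)$ outside the common prefix, up to an additive error of $2(p+2q)$ letters. Thus
\[
\dist(\Phi(g_1),\Phi(g_2))\ \ge\ \tfrac1{C}\dist(g_1,g_2)-C,
\]
where $C$ depends on $M'$, $p$, $q$, obtained by passing through lifts using $|g|\le|\hat g|\le M'|g|$. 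The Lipschitz bound follows as before, since appending a generator changes $\hat g$ by one syllable adjustment, of cost at most $p+2q+2$ letters.

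For the final statement, when $q=0$, Lemma~\ref{lem:homo} gives a homomorphism $\phi$ at bounded distance from $F$, sending $s_v$ to a conjugate of some $t_w$. Set $\psi(s_v)=\phi(s_v)^k$. Then $\psi(s_v)$ has order exactly $N/k=M$ in $A_{C_n}(\bar N)$, and the relations of $A_{C_m}(\bar M)$ hold, so $\psi$ descends to a homomorphism. It agrees coarsely with the construction above applied to exponents multiplied by $k$. Lemma~\ref{obs:no_glides} then shows that the images of lazy words are syllable-reduced with exponents $ka_c\in\{k,\dots,N-k\}$, all nonzero modulo $N$. This gives injectivity and the lower bound, hence $\psi$ is undistorted.

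I expect the main obstacle to be the bookkeeping in the Lemma~\ref{lem:prefixes} analogue. There, modular exponents can make the shuffled label syllables merge rather than cancel, and also turn the lifts of prefixes of $g$ into non-prefixes. I would handle this by working throughout with canonical lifts and positive exponents, and absorbing every discrepancy into the additive constant.
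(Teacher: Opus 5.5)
Your outline is sound and builds the same map as the paper. The paper's relabelled $F'$ replaces each label letter $t_w^{-1}$ by $t_w^{N_w-1}$, so $\pi\circ F'=\pi\circ F$ and your $\Phi$ is its $F_{\bar M,\bar N}$. Your exponent checks under the three hypotheses are also the ones it uses. The difference is in how the quasiisometric-embedding property is carried over to the graph products.

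\textbf{The paper's route.} It never argues inside a graph product. Since $F'$ sends canonical lifts to words whose exponents all lie in $\{1,\dots,N_w-1\}$, it writes $F_{\bar M,\bar N}$ as a composite of three maps:
\begin{enumerate}
\item the lift $A_{C_m}(\bar M)\to A_{C_m}$;
\item the map $F'$, which is a quasiisometric embedding by rerunning Section~\ref{sec:building_qie} inside the RAAG;
\item $\pi$ restricted to the canonical section of $A_{C_n}(\bar N)$, which is bi-Lipschitz because it is inverse to the Lipschitz lift.
\end{enumerate}

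\textbf{Your route.} You keep the $\pm1$ labels and redo Lemma~\ref{lem:prefixes} and Proposition~\ref{prop:building_cycles_qie} in $A_{C_n}(\bar N)$ at the level of syllables. This can work: when $N^{-}<\infty$ every $M_v$ is finite, so syllable length controls word length in $A_{C_m}(\bar M)$, and word length dominates syllable length in $A_{C_n}(\bar N)$.

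However, the step you defer as ``absorbing every discrepancy into the additive constant'' is exactly where care is needed. The identity $d(a,b)=d(a,P)+d(P,b)$ through the maximal common prefix, used in Proposition~\ref{prop:building_cycles_qie}, fails in a graph product with finite vertex groups. Moreover, with $\pm1$ labels nothing in your outline stops the two tails past the common prefix in $A_{C_n}$ from beginning with same-type syllables of opposite sign that agree modulo $N_w$. An example is a glide exponent $N_w-1$ against a label letter $t^{-1}$ when $M^+=N^{-}-1$. So a priori the common prefix grows in the quotient and cancellation can propagate.

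Making all label exponents positive is what rules this out. If two syllable-reduced words have all exponents in $\{1,\dots,N_w-1\}$, their tails past the maximal common prefix in $A_{C_n}$ cannot begin with same-type syllables, since they would share a first letter. Hence the inverse of one tail followed by the other stays reduced after projecting. This is the work your phrase ``positive exponents'' has to do.

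Your Lipschitz constant also needs a factor of about $M'$. When an exponent wraps around, $\widehat{gs}$ can differ from $\hat g$ by $M_v-1$ letters.

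\textbf{The final statement.} Here your route is cleaner than the paper's. Setting $\psi(s_v)=\phi(s_v)^k$, i.e.\ precomposing the homomorphism of Lemma~\ref{lem:homo} with $s_v\mapsto s_v^k$, gives a homomorphism immediately. The paper instead rebuilds $f_{M,k}$ using $D^{ka}$ and then perturbs. Because $\psi$ is a homomorphism, the single-element lower bound from Lemma~\ref{obs:no_glides} already gives undistortedness: the exponents are $ka_c\in\{k,\dots,N-k\}$, so images of lazy words stay reduced. No two-point prefix argument is needed.
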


\begin{proof}
Let us write $\Gamma=C_m$ and $\Lambda=C_n$. Let $\{s_v\,:\,v\in\Gamma\}$ and $\{t_w\,:\,w\in\Lambda\}$ be the standard generating sets of $A_\Gamma(\bar M)$ and $A_\Lambda(\bar N)$, respectively.

Consider the map $f:\Gamma^\ext\to\Lambda^\ext$ constructed in Item~\ref{sh:f_construction}. Supposing that $N^-=\infty$, recall the map $F:A_\Gamma\to A_\Lambda$ constructed using $f$ in Item~\ref{sh:roots}. Let $F'$ be the restriction of $F$ to (the section in $A_\Gamma$ of) $A_\Gamma(\bar M)$. If instead $N^-<\infty$, then we shall first modify $f$, and then use the modified map to construct a map $F':A_\Gamma(\bar M)\to A_\Lambda(\bar N)$.

Suppose that $N^-<\infty$. By our assumptions, we therefore have $M^+<\infty$. We modify $f$ by replacing every instance of $t_w^{-1}$ in a basic relative label by $t_w^{N_w-1}$, for every standard generator $t_w$ of $A_\Lambda$ for which $N_w<\infty$. This gives us a different map $f':\Gamma^\ext\to\Lambda^\ext$. Note that composing either $f$ or $f'$ with the quotient $\Lambda^\ext\to\Lambda^\ext(\bar N)$ gives the same map. 

If $M^+<N^-$, then we can see from the construction in Item~\ref{sh:f_construction} that $f'$ restricts to a map $f_{\bar M}:\Gamma^\ext(\bar M)\to\Lambda^\ext(\bar N)$. Additionally, Lemma~\ref{obs:no_glides} shows that the same is true if $M^+=N^-$ and $q=0$.

Starting from the map $f'$, we recursively define a map $F':A_\Gamma\to A_\Lambda$ in the same way as in Item~\ref{sh:roots}, with $f'$ replacing $f$. That is, for a reduced word $\gamma's^d$, we define $F(\gamma's^d)=F(\gamma')\ell t^{d}$ where $\ell$ is the minimal basic relative label and $t$ is a generator, both determined by $f'$. The same argument as in Lemma~\ref{lem:F_well_defined} shows that $F'$ is well defined. According to Observation~\ref{obs:labels_powers}, Proposition~\ref{prop:roots_structure} holds for $F'$, with the difference that the exponents appearing in the relative labels $\ell_i$ are either 1 or $N_w-1$ whenever $N_w<\infty$, rather than always being $\pm1$, which they still are when $N_w=\infty$.   

Suppose that $g\in A_\Gamma$ has a syllable-reduced expression whose $s_v$-exponents all lie in $\{1,\dots,M_v-1\}$ for each $v\in\Gamma$, and let $\gamma$ be a lazy representative of $g$. As a consequence of the above modified version of Proposition~\ref{prop:roots_structure}, we see that all the $t_w$-exponents appearing in $F'(\gamma)$ lie in the set $\{1,\dots,M^+-1,N_w-1\}$. Note that we are not claiming here that $F'(\gamma)$ is syllable-reduced.

Regardless of whether $N^-<\infty$, we can now define a map $F_{\bar M,\bar N}:A_\Gamma(\bar M)\to A_\Lambda(\bar N)$ as follows. Given $g\in A_\Gamma(\bar M)$, consider the unique lift $\hat g\in A_\Gamma$ of $g$ whose $s_v$-exponents all lie in $\{1,\dots,M_v-1\}$ whenever $M_v<\infty$. Let $\gamma$ be a lazy representative of $\hat g$. We let $F_{\bar M,\bar N}(g)$ be the image of $F'(\gamma)$ in $A_\Lambda(\bar N)$ under the quotient $A_\Lambda\to A_\Lambda(\bar N)$.

\medskip

Let us prove that $F_{\bar M,\bar N}$ is a quasiisometric embedding. Let $\gamma$ be a syllable-reduced word representing $g\in A_\Gamma(\bar M)$. If $M^-=\infty$, then $A_\Gamma(\bar M)=A_\Gamma$, and the length of $\gamma$ is equal to $|g|$. If $M^-<\infty$, then the length of $\gamma$ is at least $|g|$ and at most $(M'-1)|g|$, where we recall that $M'=\max\{M_v\,:\,M_v<\infty\}$. Thus, even if $M^-<\infty$, the lifting map $g\mapsto\hat g$ is a quasiisometric embedding, with constant at most $M'$. 

If $N^-<\infty$, then, as noted above, the modified version of Proposition~\ref{prop:roots_structure} shows that if $\gamma$ is a lazy representative of $\hat g$ for some $g\in A_\Gamma(\bar M)$, then $F'(\gamma)$ has its $t_w$-exponents lying in $\{1,\dots,M'-1,N_w-1\}$ whenever $N_w<\infty$. Thus the restriction of the quotient map $A_\Lambda\to A_\Lambda(\bar N)$ to the $F'$-image of the lift of $A_\Gamma(\bar M)$ is a quasiisometric embedding, where the constant is 1 if $N^-=\infty$, and at most $N'$ if $N^-<\infty$.

As a consequence of these considerations, it suffices to show that $F':A_\Gamma\to A_\Lambda$ is a quasiisometric embedding. For $N^-=\infty$, this is precisely Proposition~\ref{prop:building_cycles_qie}.

For $N^-<\infty$, the argument that $F'$ is a quasiisometric embedding is the same as in Proposition~\ref{prop:building_cycles_qie} with only superficial differences due to our tweak in the definition of $f'$, which we now highlight. 
Firstly, the maximal length of a basic relative label is bounded by $(N'-1)(p+2q)$, which affects the Lipschitz constant of $F'$ and the additive coarse colipschitz constant. 

The second difference actually marginally simplifies the proof of the colipschitz property. Let $\gamma$ be a lazy representative of some $g\in A_\Gamma$, and consider the possible ways for $F(\gamma)$ to fail to be syllable-reduced, as described in \cref{prop:roots_structure}. Recall that, in replacing $F$ by $F'$, all instances of $t_w^{-1}$ appearing in basic relative labels have been replaced by $t_w^{N_w-1}$ when $N_w<\infty$. It follows that such $t_w$ cannot cause a failure for $F(\gamma)$ to be syllable-reduced in the way described in \cref{prop:roots_structure}. Thus if $h$ is an $A_\Gamma$-prefix of $g$, then we can follow the proof of Lemma~\ref{lem:prefixes} and there will be fewer cancelling pairs that need shuffling to produce the representative $\mu$ of $F'(g)$ with an initial subword $\nu$ representing $F'(h)$.

With these changes accounted for, the proof of \cref{prop:building_cycles_qie} follows through to show that $F'$ is a quasiisometric embedding. As described above, this shows that $F_{\bar M,\bar N}:A_\Gamma(\bar M)\to A_\Lambda(\bar N)$ is a quasiisometric embedding.

\medskip

We now consider the second statement, in which $q=0$. Let $M=M^-=M^+$, and let $N=N^-=N^+=kM$. We make a small modification to the above construction. Define a modified map $f_{M,k}:\Gamma^\ext(\bar M)\to\Lambda^\ext(\bar N)$ by following Item~\ref{sh:f_construction}, except, in the notation of that item, first set $f|_{g\cdot \Gamma}=D^{ka}_{f(g'\cdot v_i)}(B)$ instead of $D^a_{f(g'\cdot v_i)}(B)$, and then change the powers appearing in basic relative labels as in the construction of $f'$ above. Similarly to Lemma~\ref{obs:no_glides}, the lemmas subsequent to Item~\ref{sh:f_construction} go through to show that $f_{M,k}$ is well defined, and as above we can associate a map $F_{M,k}:A_\Gamma(\bar M)\to A_\Lambda(\bar N)$. The same argument as above shows that $F_{M,k}$ is a quasiisometric embedding. 

To show that $F_{M,k}$ is at finite distance from a homomorphism, we define a perturbation $\phi_{M,k}$ of $F_{M,k}$ in the same way as in Lemma~\ref{lem:homo}. To show that $\phi_{M,k}$ is a homomorphism, we use the same argument as in that lemma, plus the following two observations: firstly, if $a+b=M$, then $\phi(s^a)\phi(s^b)=1$ for every standard generator $s\in S$; and secondly, wherever we see an $\ell^{-1}$ in the expressions of Lemma~\ref{lem:homo}, that expression is accurate as a representative of an element of $A_\Lambda(\bar N)$ because of the way we tweaked the powers in basic relative labels.
\end{proof}

\begin{corollary} \label{cor:graph_prod_finite}
Suppose that $m,n>6$ are integers with $m=n+p(n-4)+q(n-2)$ for some integers $p\ge1$ and $q\ge0$. Let $M,N\in\N_{\ge2}$. Let $G$ and $H$ be graph products of finite or cyclic groups whose underlying graphs are $C_m$ and $C_n$, respectively, and whose vertex groups all have order at most $M$ and at least $N$, respectively. 

If $M<N$, or if $M=N$ and $q=0$, then there is a quasiisometric embedding $G\to H$.
\end{corollary}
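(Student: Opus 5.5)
We plan to deduce the corollary from \cref{thm:cyclic_product_cyclic} by replacing each vertex group with a cyclic group of comparable size, in a way that does not change the quasiisometry type of the graph product. Write $G$ as the graph product over $C_m$ with vertex groups $G_v$, and $H$ as the graph product over $C_n$ with vertex groups $H_w$. The key tool is the following standard fact about graph products: if $\phi_v:G_v\to G'_v$ is a bijective quasiisometry for each $v$ (with uniform constants over the finitely many vertices), then the induced map on normal forms $g_1\cdots g_k\mapsto\phi_{v_1}(g_1)\cdots\phi_{v_k}(g_k)$ is a well-defined quasiisometry between the graph products. This is because normal forms in a graph product are unique up to shuffling commuting syllables, and word length is the sum of the vertex-group lengths of the syllables of any reduced form. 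In particular, any two graph products over the same graph whose vertex groups at each vertex are finite of equal cardinality are bijectively quasiisometric, since any bijection between finite groups is a quasiisometry. The same holds if both vertex groups at a vertex are infinite cyclic.

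Set $\bar M\in L^{C_m}$ with $M_v=|G_v|$ (so $M_v=\infty$ when $G_v\cong\Z$), and $\bar N\in L^{C_n}$ with $N_w=|H_w|$. Then $G$ is quasiisometric to $A_{C_m}(\bar M)$ and $H$ is quasiisometric to $A_{C_n}(\bar N)$, by the fact above applied vertex by vertex with a bijection $G_v\to\Z/M_v\Z$. For this, a vertex group of size $1$ would have to be excluded; the hypothesis $M,N\ge2$ together with ``order at least $N$'' handles this for $H$. For $G$, trivial vertex groups would change the graph. I would assume that vertex groups are nontrivial, as is implicit in the notion of a graph product over $C_m$, or else note that $C_m$ minus vertices is a forest and handle that case separately if needed.

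It remains to check the hypotheses of \cref{thm:cyclic_product_cyclic}. Since $G$ is a graph product of finite \emph{or cyclic} groups with all orders at most $M<\infty$, every $G_v$ is finite, so $M^+\le M$. Similarly $N^-\ge N$. If $M<N$ then $M^+<N^-$, which is the second bullet. If $M=N$ and $q=0$, then either $M^+<N^-$, or $M^+=N^-=M<\infty$ with $q=0$, which is the third bullet. In every case \cref{thm:cyclic_product_cyclic} gives a quasiisometric embedding $A_{C_m}(\bar M)\to A_{C_n}(\bar N)$. Composing with the quasiisometries $G\to A_{C_m}(\bar M)$ and $A_{C_n}(\bar N)\to H$ gives the required quasiisometric embedding.

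The main point needing care is the graph-product quasiisometry fact. Well-definedness follows because the map is defined syllable by syllable and respects commutation of syllables at adjacent vertices. Being quasiisometric uses that $d(x,y)$ equals the length of $x^{-1}y$, computed via its reduced normal form. One must check that the image of the normal form of $x^{-1}y$ is comparable to the normal form of $\Phi(x)^{-1}\Phi(y)$. Since $\Phi$ is not a homomorphism, I would argue as follows. Write $x=uz$ and $y=uz'$ with $u$ the maximal common prefix. Then $\Phi(x)$ and $\Phi(y)$ share the prefix $\Phi(u)$, and the remaining parts differ only in at most one syllable per vertex in a ``clique'' of last-syllable positions. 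This gives a bounded additive error, in the same spirit as the prefix arguments of \cref{prop:building_cycles_qie}.
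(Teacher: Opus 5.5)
Your proposal is correct and is essentially the paper's proof: the paper also sets $M_v$ and $N_w$ to be the orders of the vertex groups, identifies $G$ and $H$ up to quasiisometry with $A_{C_m}(\bar M)$ and $A_{C_n}(\bar N)$, and applies \cref{thm:cyclic_product_cyclic}, and its hypothesis check is exactly the one you give. The only difference is that the paper cites \cite[Thm~8.22]{genevois:cubical} for replacing the vertex groups instead of proving it; if you keep your own sketch, choose each bijection $\phi_v$ to send the identity to the identity (otherwise images of reduced words need not be reduced and $\Phi$ can collapse distances), and take maximal common prefixes in the syllable sense, which makes $\Phi$ a bilipschitz bijection.
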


\begin{proof}
For $v\in C_m$, let $M_v$ be the order of the vertex group at $v$, and let $\bar M=(M_v)_{v\in C_m}$. Define $\bar N=(N_w)_{w\in C_n}$ similarly. According to \cite[Thm~8.22]{genevois:cubical}, the group $G$ is quasiisometric to $A_{C_m}(\bar M)$, and $H$ is quasiisometric to $A_{C_n}(\bar N)$. The statement therefore follows from \cref{thm:cyclic_product_cyclic}.
\end{proof}


\begin{remark} \label{rem:refinements}
The assumptions of Theorem~\ref{thm:cyclic_product_cyclic} can likely be refined if one takes into account more information about the map $f$ in Item~\ref{sh:f_construction}. Such refinements of \cref{thm:cyclic_product_cyclic} would yield corresponding refinements of \cref{cor:graph_prod_finite}. Certainly one can, with more care, give conditions to obtain group embeddings in more generality. 
\end{remark}

From now on we shall only consider constant vectors $\bar N=(N)_{v\in\Gamma}\in L^\Gamma$. For such vectors, we write $A_\Gamma(N)=A_\Gamma(\bar N)$.

If $n>4$, then the hyperbolic plane can be tiled by regular right-angled $n$-gons. A regular such tiling can be generated by the reflection group $A_{C_n}(2)$. More generally, $A_{C_n}(N)$ is a uniform lattice in the Bourdon building $I_{n,N}$.

In contrast to the rigidity of quasiisometries between Bourdon buildings, Theorem~\ref{thm:cyclic_product_cyclic} yields exotic quasiisometric embeddings. We conclude by describing exotic quasiisometric embeddings of $\mathbb{H}^2$ into Bourdon buildings and right-angled Artin groups.

\begin{corollary}
If $\Gamma$ contains an induced cycle of length greater than four, then there are quasiisometric embeddings of $\mathbb{H}^2$ into $A_\Gamma$ with image not contained in any finite neighbourhood of any finite union of surface subgroups.
\end{corollary}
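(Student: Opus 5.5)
We reduce to a single induced cycle and then use \cref{thm:cyclic_product_cyclic}. Let $C_n\subseteq\Gamma$ be an induced cycle with $n>4$. The parabolic subgroup $A_{C_n}$ is undistorted in $A_\Gamma$: it is a retract, via the map killing the other generators. So it suffices to produce quasiisometric embeddings $\mathbb H^2\to A_{C_n}$ with the required property, and then compose with the inclusion.

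\textbf{Reducing to $n>6$.} If $n\in\{5,6\}$, we first pass to a larger cycle. By \cref{thm:kimkoberda_subgps} and \cref{prop:induced_to_homo}, $A_{C_n}$ contains an undistorted copy of $A_{C_{n'}}$ for some $n'>6$ (e.g.\ $n'=n+p(n-4)$ with $p$ large). The construction will show that any finite union of surface subgroups of $A_{C_n}$ fails to coarsely contain the image of $\mathbb H^2$. This is because the obstruction we use is coarse-geometric in $A_\Gamma$ itself, as explained below. So we may assume $n>6$.

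\textbf{The embedding.} Choose $m=n+p(n-4)+q(n-2)$ with $p\ge1$ and $q>0$. \cref{thm:cyclic_product_cyclic} applies with $\bar M\equiv2$ and $\bar N\equiv\infty$ (case $N^-=\infty$), giving a quasiisometric embedding $F:W_{C_m}=A_{C_m}(2)\to A_{C_n}$. Since $W_{C_m}$ is a cocompact reflection group of $\mathbb H^2$, this yields a quasiisometric embedding $\mathbb H^2\to A_{C_n}$. In fact we get infinitely many such embeddings, by varying $q$ or by precomposing with elements of $A_{C_n}$.

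\textbf{The obstruction (main step).} We must show the image is not within finite Hausdorff distance of any subset of a finite union $\bigcup_i g_iS_i$ of cosets of surface subgroups $S_i$. I would argue via the induced map on extension graphs, as in \cref{lem:no_homo}. The images of standard geodesics of $W_{C_m}$, which are periodic lines of reflection walls in $\mathbb H^2$, are coarsely standard geodesics of $A_{C_n}$, i.e.\ vertices of $C_n^\ext$. Because of the glides when $q>0$, the vertex type of the image of a fixed wall type is not constant. More precisely, along a sequence of blocks produced by repeated glides, the rotation parameter $\kappa$ in Item~\ref{sh:gliding} shifts, so a given wall type of $C_m$ is sent to vertices of every type in $C_n$.

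Now suppose the image of $\mathbb H^2$ lies coarsely in finitely many cosets $g_iS_i$. By a coarse-intersection argument, some finite-index piece of the image must be coarsely equal to a quasiconvex subset of a single $g_iS_i$. A finitely generated subgroup $S_i$ meets each standard geodesic coset class only through finitely many conjugacy classes of cyclic subgroups. Combined with equivariance, this forces the type pattern of the images of walls to be eventually periodic under a finite-index subgroup of $W_{C_m}$. That contradicts the non-periodic drift of $\kappa$ coming from iterated glides.

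This last step is the hardest, and it is also the weakest part of the plan: the drift of $\kappa$ must be made genuinely aperiodic along some path. I would first try to choose the path explicitly, as a word alternating glides at early and late shortcuts, so that $\kappa$ changes by a nonzero amount each time. I would then verify directly, using \cref{lem:prefixes}, that the corresponding images are not coarsely contained in any single quasiconvex surface subgroup coset.
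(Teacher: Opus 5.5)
\textbf{There is a genuine gap: the obstruction step, which is the whole content of the corollary, is only sketched, and several of its assertions are false.}

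Your reduction to a single induced cycle agrees with the paper. So does your construction of $F:W_{C_m}=A_{C_m}(2)\to A_{C_n}$ from \cref{thm:cyclic_product_cyclic}. (The paper passes to a finite-index subgroup of $A_\Gamma$ containing some $A_{C_n}$ with $n\ge10$ as a parabolic subgroup, and takes $p=q=1$.) The problems with the obstruction sketch are these.

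\emph{Walls are not standard geodesics.} In $W_{C_m}$ the cosets $g\langle s\rangle$ have two elements, and the images of walls are not coarsely standard geodesics of $A_{C_n}$. The $s_0$-wall through $1$ is coarsely $\{(s_1s_{-1})^k\}$, and $F$ sends it to $(t_1t_{-1})^k$. That is a line in the centraliser of $t_0$, not a coset of $\langle t_0\rangle$. So the type-invariance criterion used in \cref{lem:no_homo} cannot simply be quoted; this is why the paper says more care is needed for involutions.

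\emph{The coarse-intersection paragraph is not an argument.} $F$ is not equivariant for any group, ``finite-index piece of the image'' is undefined, and quasiconvexity of the $S_i$ is assumed without justification.

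\emph{The intended contradiction is misdirected.} Early glides do not change $\kappa$ at all; they change only the shift and the sign. Aperiodicity is neither established nor needed. Along $g^k$ with $g=s_3s_{-3}$ there is one glide and then only doublings, so the type pattern there is eventually constant. What must fail is exact invariance under every finite-index subgroup, and a single glide already gives that.

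\emph{The claim about $A_\Gamma$ is not substantiated.} You say your obstruction ``is coarse-geometric in $A_\Gamma$ itself''. This is needed because surface subgroups of $A_\Gamma$ need not lie in $A_{C_{n'}}$, but nothing in the proposal establishes it.

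\textbf{How the paper supplies the missing step.} It computes directly.
Since $f(v_3)$ is an early shortcut of $f(C_m)$, one has $F(s_3)=t_2t_{-1}^2$. After that only doublings occur, giving $F(g^k)=t_2t_{-1}(t_{-1}t_5)^k$.
For $h=s_1s_{-1}$ one has $F(h^k)=(t_1t_{-1})^k$, while $F(g^kh^k)=t_2t_{-1}(t_{-1}t_5)^k(t_1t_3)^k$.
So the $s_0$-wall through $1$ goes to a vertex of type $t_0$, but the $s_0$-wall through $g^k$ goes to one of type $t_2$.

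Now let $G$ have index $r$, so $g^N,h^N\in G$ whenever $r!\mid N$. Suppose $\psi$ is a homomorphism on $G$ within distance $D$ of $F$. Then $\psi(g^N)\psi(h^N)=F(g^N)z(t_1t_{-1})^Nw$ with $|z|,|w|\le D$ would have to lie within $D$ of $F(g^N)(t_1t_3)^N$. This is impossible once $N$ is large relative to $D$.
Hence $F$ is not close to a homomorphism on any finite-index subgroup. The paper then passes to surface subgroups via commensurability of surface groups, and varies the pair $(g,h)$ to handle finite unions.

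\textbf{What you need to do.} Replace your coarse-intersection paragraph with an explicit computation of this kind. You also need an actual argument linking coarse containment in surface-subgroup cosets to a virtual homomorphism.
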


\begin{proof}
Suppose that $\Gamma$ contains an induced cycle $C$ of length greater than four. As described at the start of Section~\ref{sec:induced_implies_subgroup}, this implies that $A_\Gamma$ has a finite-index subgroup with a parabolic subgroup of the form $A_{C_n}$ for some $n\ge10$. Indeed, $C_{|C|+5(|C|-4)}$ can easily be found as an induced subgraph of $C^\ext\subset\Gamma^\ext$. 

Let $m=n+(n-2)+(n-4)$. The group $A_{C_m}(2)$ is a virtual surface group, so we can let $F_{2,\infty}:A_{C_m}(2)\to A_{C_n}$ be the quasiisometric embedding constructed in the proof of Theorem~\ref{thm:cyclic_product_cyclic}. More precisely (since $A_{C_n}=A_{C_n}(\infty)$), the map $F_{2,\infty}$ is defined by lifting $g\in A_{C_m}(2)$ to $\hat g\in A_{C_m}$ and then applying the map $F$ constructed in Item~\ref{sh:roots}. We shall show that $F_{2,\infty}$ is the desired quasiisometric embedding. The phenomenon is essentially the same as in Lemma~\ref{lem:no_homo}, but more care is needed because the generators of $A_{C_m}(2)$ are involutions.

Using the standard generating set $S$ with the cyclic convention as described at the start of Section~\ref{sec:building_qie}, consider the elements $g=s_3s_{-3}$ and $h=s_1s_{-1}$ in $A_{C_m}(2)$. If $G<A_{C_m}(2)$ has index $r$, then $g^{r!k}\in G$ and $h^{r!k}\in G$ for all $k\in\Z$.

From the description in Item~\ref{sh:roots}, we can compute 
\[
F(\hat g^k) \,=\, t_2t_{-1}^2t_5(t_{-1}t_5)^{k-1}.
\]
Indeed, $f(v_3)$ is a shortcut of the block $f(C_m)$, so $F(s_3)=t_2t_{-1}^2$, and then we have that $f(s_3\cdot v_{-3})$ and $f(s_3\cdot v_3)$ both lie in $F(s_3)\cdot C_n$, so there are no further glides when computing $F(\hat g^k)$. Thus $F_{2,\infty}(g^k)=t_2t_{-1}(t_{-1}t_5)^k$. More easily seen is that $F_{2,\infty}(h^k)=(t_1t_{-1})^k$. Hence, if $\psi:A_{C_m}(2)\to A_{C_n}$ is a map at distance at most $p\in\mathbb N$ from $F_{2,\infty}$, then there are elements $z,w\in A_{C_n}$ with $|z|,|w|\le p$ such that
\[
\psi(g^p)\psi(h^p) \,=\, t_2t_{-1}(t_{-1}t_5)^pz(t_1t_{-1})^pw.
\]

On the other hand, by building on the computation of $F(\hat g^k)$ given above, we find that 
\[
F_{2,\infty}(g^ph^p) \,=\, t_2t_{-1}(t_{-1}t_5)^p(t_1t_3)^p.
\]
It is easy to see that there is no element of $A_{C_n}$ that lies within a distance of $p$ of both $F_{2,\infty}(g^ph^p)$ and $\psi(g^p)\psi(h^p)$, regardless of which elements $z$ and $w$ of length at most $p$ are chosen.

By taking $p$ to be increasingly large multiples of $r!$, we conclude that $F_{2,\infty}$ is not at finite distance from a homomorphism on any finite-index subgroup of $A_{C_m}(2)$. Since all surface groups are commensurable, this shows that $F_{2,\infty}(A_{C_m}(2))$ does not lie at finite distance from any surface subgroup of $A_\Gamma$. 

The elements $g$ and $h$ above were chosen for their simplicity, but one can easily find infinitely many such pairs. By varying the pairs one obtains the stronger statement that $F_{2,\infty}(A_{C_m}(2))$ is not contained in a finite neighbourhood of a union of finitely many surface subgroups of $A_\Gamma$.
\end{proof}

An analogous proof shows the following.

\begin{corollary}
For every $n>6$ and $N>2$, the Bourdon building $I_{n,N}$ admits quasiisometric embeddings of $\mathbb H^2$ that do not lie at finite distance from surface subgroups and whose intersection with each standard copy of $\mathbb H^2$ is bounded.
\end{corollary}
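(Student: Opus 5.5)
Following the proof of the corollary just above, I would build the embedding of $\mathbb H^2$ as the composite of a quasiisometry $\mathbb H^2\to A_{C_m}(2)$ with the map $F_{2,N}:A_{C_m}(2)\to A_{C_n}(N)$ from \cref{thm:cyclic_product_cyclic}, and then with the orbit map $A_{C_n}(N)\to I_{n,N}$, which is a quasiisometry because $A_{C_n}(N)$ is a uniform lattice in $I_{n,N}$. Fix $n>6$ and $N>2$, and take $m=n+(n-4)+(n-2)$, so that $p=q=1$. Then $M^+=2<N=N^-$, so the second bullet of \cref{thm:cyclic_product_cyclic} applies. Since $m>4$, the group $A_{C_m}(2)$ is a cocompact reflection group of $\mathbb H^2$, so the composite $\Phi$ is a quasiisometric embedding of $\mathbb H^2$ into $I_{n,N}$.

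For the statement about surface subgroups, I would repeat the computation from the previous corollary, carried out inside $A_{C_n}(N)$. Lift $g=s_3s_{-3}$ and $h=s_1s_{-1}$ to $A_{C_m}$ and apply $F'$, i.e. $F$ with each $t_w^{-1}$ in a basic relative label replaced by $t_w^{N-1}$. Because $f(v_3)$ is a shortcut, the element $F_{2,N}(g^k)$ contains a glide prefix followed by a periodic word in $t_{-1},t_5$ with exponents in $\{1,N-1\}$. By contrast, $F_{2,N}(h^k)$ is a periodic word in $t_1,t_{-1}$, and $F_{2,N}(g^kh^k)$ ends in a periodic word in $t_1,t_3$. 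I would then compare $F_{2,N}(g^ph^p)$ with $\psi(g^p)\psi(h^p)$ for any map $\psi$ within distance $p$ of $F_{2,N}$, using normal forms in the graph product $A_{C_n}(N)$. The point is that the normal form of $\psi(g^p)\psi(h^p)$ contains a long $t_1t_{-1}$-periodic tail, whereas $F_{2,N}(g^ph^p)$ has a $t_1t_3$-tail, and $t_{-1}$ does not commute with $t_3$. This rules out closeness to a homomorphism on every finite-index subgroup. Surface groups are commensurable, so the image is not at finite distance from any surface subgroup of $A_{C_n}(N)$, and hence of $I_{n,N}$.

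For bounded intersection with standard apartments, I would first identify the standard copies of $\mathbb H^2$ in $I_{n,N}$ as translates $x\cdot\mathcal A$ of the apartment $\mathcal A$ associated to the parabolic-like subgroup generated by the involutions $t_w^{N/2}$, or more robustly as subcomplexes whose chambers have labels using at most two letters per vertex type. I would then argue that along any geodesic ray in $\mathbb H^2\cong A_{C_m}(2)$, the image under $F_{2,N}$ eventually passes through a glide. At each glide the exponent $\hat a=a+1=2$ appears, because every lifted exponent is $1$. By Proposition~\ref{prop:roots_structure} this $t^2$ survives in reduced form, and since $N>2$ the letter $t^2$ is neither $t$ nor $t^{-1}=t^{N-1}$. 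So the image leaves any single apartment within a uniformly bounded number of syllables, and the intersection is bounded.

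I expect the main obstacle to be this last step. It requires a precise description of the apartments of $I_{n,N}$ as subsets of $A_{C_n}(N)$, and a uniform bound, over all apartments and all rays, on how long the image of a ray can stay in one apartment. The glides appear only at shortcut vertices, so I must show that every long enough path in $C_m^\ext$ meets a shortcut, using the fact that the shortcuts occur periodically in each block. This combinatorial claim is what I would check first.
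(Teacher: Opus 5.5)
The quasiisometric-embedding step and the surface-subgroup step match the paper, but your bounded-intersection argument fails, and the failure cannot be repaired for this map. The paper gives no argument for that clause at all: its proof is the single line ``an analogous proof'', referring to the preceding corollary, which has no such clause.

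\textbf{What matches the paper, with caveats.} Composing $\mathbb H^2\simeq A_{C_m}(2)$ with $F_{2,N}$ from Theorem~\ref{thm:cyclic_product_cyclic} ($m=n+(n-4)+(n-2)$, $M^+=2<N$) and an orbit map, then redoing the $g=s_3s_{-3}$, $h=s_1s_{-1}$ computation, is what the paper has in mind. Two caveats apply.
\begin{itemize}
\item The paper ran that computation with $n\ge10$. For $n=7$ it changes: $f(s_3\cdot v_{-3})$ is no longer in the top copy of the glided block, and $t_5=t_{-2}$ commutes with $t_{-1}$. So the test elements must be adapted.
\item ``Not close to a homomorphism on any finite-index subgroup'' plus commensurability does not by itself show that the \emph{image} is not Hausdorff-close to a surface subgroup $H$. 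That would only give a quasiisometry $A_{C_m}(2)\to H$, which need not be close to a virtual isomorphism. This weakness is inherited from the paper's preceding corollary.
\end{itemize}

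\textbf{The gap.} Your key claim, that the image of every geodesic ray eventually passes through a glide, is false.
\begin{itemize}
\item $k\mapsto(s_1s_{-1})^k$ is a geodesic ray. Since $f(v_{\pm1})=w_{\pm1}$ have depth $0$, only doublings occur, and $F_{2,N}((s_1s_{-1})^k)=(t_1t_{-1})^k$.
\item More generally, let $P$ be the path $v_{-(n-3)},\dots,v_0,v_1$, which $\iota_{+,1}$ sends onto $\Lambda\ssm\{w_2\}\subset 1\cdot\Lambda$. Set $\phi(s_v)=\type(\iota_{+,1}(v))$. Induction on syllable length gives $f(g\cdot\Gamma)=B(0,+,0;\phi(g))$ and $F(g)=\phi(g)$ for all $g$ in the parabolic subgroup $A_P\le A_{C_m}$.
\item Hence $F_{2,N}(A_P(2))$ is exactly the set of elements of $\langle t_w : w\ne w_2\rangle$ whose normal form has every exponent equal to $1$.
\end{itemize}
The set of \emph{all} chambers whose normal form has every exponent $1$ is an apartment of $I_{n,N}$. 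Let $r_u$ be the standard involutions of $A_{C_n}(2)$. This set is the image of the map $r_{u_1}\cdots r_{u_k}\mapsto t_{u_1}\cdots t_{u_k}$ on reduced words. That map preserves Weyl distance, because in $(t_{u_1}\cdots t_{u_k})^{-1}t_{u'_1}\cdots t_{u'_{k'}}$ the only possible reductions are full cancellations $t_u^{-1}t_u$, mirroring $r_ur_u=1$.

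So $\Phi(\mathbb H^2)$ meets an apartment in an unbounded set. No lemma about long paths meeting shortcuts can fix this. If ``standard copy'' is read as ``apartment'', the clause is false for this particular $\Phi$.

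Your exponent heuristic is also unsound on its own terms. An apartment picks two chambers in each panel it meets, and that choice varies from panel to panel. So a syllable $t^2$ with $t^2\ne t^{\pm1}$ does not show the image has left a given apartment. In addition, your apartment built from the involutions $t_w^{N/2}$ exists only for even $N$. To prove the clause you must first fix a notion of ``standard copy'' that excludes apartments like the one above (the paper supplies none), or else change the map.
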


\bibliographystyle{alpha}
\footnotesize{\bibliography{biblio}}
\Addresses
\end{document}

%% file: preamble.tex
\usepackage[leqno]{amsmath}
\usepackage{amssymb,xfrac}
\usepackage{amsthm}
\usepackage{ifthen} 
\usepackage[pagebackref,breaklinks,unicode]{hyperref} 
    \renewcommand*{\backrefalt}[4]{\ifcase #1 (Not cited).\or (Cited p.~#2).\else (Cited pp.~#2).\fi} 
\usepackage[capitalise,noabbrev]{cleveref}
\usepackage{enumitem}
\usepackage{graphicx}
\usepackage{tikz-cd, tikz}
\usepackage{float}
\usepackage[justification=centering]{caption}
\usepackage{subcaption}
\usepackage{wasysym}

\numberwithin{equation}{section}

\newtheorem{theorem}{Theorem}[section]
\newtheorem{proposition}[theorem]{Proposition}
\newtheorem{lemma}[theorem]{Lemma}
\newtheorem{corollary}[theorem]{Corollary}

\newtheorem*{theorem*}{Theorem}
\newtheorem*{proposition*}{Proposition}
\newtheorem*{lemma*}{Lemma}
\newtheorem*{corollary*}{Corollary}

\newtheorem{mthm}{Theorem} 
\newtheorem{mcor}[mthm]{Corollary}

\theoremstyle{definition}
\newtheorem{definition}[theorem]{Definition}
\newtheorem{observation}[theorem]{Observation}
\newtheorem{remark}[theorem]{Remark}

\newtheorem*{definition*}{Definition}
\newtheorem*{observation*}{Observation}
\newtheorem*{remark*}{Remark}
\newtheorem*{example*}{Example}
\newtheorem*{question*}{Question}
\newtheorem*{exercise*}{Exercise}
\newtheorem*{fact*}{Fact}
\newtheorem*{notation*}{Notation}

\newcommand{\Z}{\mathbb{Z}}

\DeclareMathOperator{\str}{Star}

\DeclareMathOperator{\starr}{Star}
\renewcommand*{\star}{\starr}

\newcommand{\set}[1]{\left\{ #1 \right\}}

\newcounter{claimref}
\newcounter{claimcount}

\newenvironment{claim*}{\par\medskip\noindent\textbf{Claim:}\hspace{0.5mm}}{}

\newenvironment{claim*proof}{\medskip\noindent\emph{Proof of Claim.}\hspace{0.5mm}}
    {\leavevmode\unskip\penalty9999\hbox{}\nobreak\hfill\quad\hbox{$\diamondsuit$}\medskip}

\newcounter{subclaimcount}

\newenvironment{subclaim*}{\par\medskip\textbf{Subclaim:}\hspace{0.5mm}}{}
\newenvironment{subclaimproof*}{\medskip\noindent\emph{Proof of Subclaim.}\hspace{0.5mm}}
    {\leavevmode\unskip\penalty9999\hbox{}\nobreak\hfill\quad\hbox{$\fullmoon$}\medskip}

\crefname{cond}{condition}{conditions}
\creflabelformat{cond}{#2#1\@#3}
\crefname{obs}{observation}{observations}
\creflabelformat{obs}{#2#1\@#3}

\renewcommand{\cal}{\mathcal}
\newcommand*{\eps}{\varepsilon}
\renewcommand*{\L}{\mathcal{L}}

\newcommand*{\dist}{d}

\DeclareMathOperator{\dep}{Dep}

\DeclareMathOperator{\doub}{D}
\DeclareMathOperator{\glide}{Gl}

\DeclareMathOperator{\short}{Short}
\DeclareMathOperator{\type}{Typ}

\newcommand*{\syl}[1]{|#1|_{\mathrm{syl}}}

\renewcommand*{\subset}{\subseteq}

\newcounter{shcount}
\newcounter{thmcount}

\swapnumbers
\newcommand*{\bsh}[1]{\theoremstyle{definition}\newtheorem{subhead\theshcount}[theorem]{#1}
    \begin{subhead\theshcount}} 
\newcommand*{\esh}{\end{subhead\theshcount}\stepcounter{shcount}} 
 
\newcommand*{\numberedtheorem}[3]{\theoremstyle{plain}\newtheorem*{makethm\thethmcount}{#1}
    \ifthenelse{\equal{#2}{}}{\begin{makethm\thethmcount}#3\end{makethm\thethmcount}\stepcounter{thmcount}}
    {\begin{makethm\thethmcount}[#2]#3\end{makethm\thethmcount}\stepcounter{thmcount}}} 

\makeatletter\newcommand{\linkdest}[1]{\Hy@raisedlink{\hypertarget{#1}{}}}\makeatother 

\usepackage{lmodern,anyfontsize} 
\usepackage{setspace} 
\usepackage{tocloft} 
\usepackage[nottoc]{tocbibind}